\newcommand{\ddp}[2]{\frac{\partial#1}{\partial#2}}
\newcommand{\D}{\partial D}
\renewcommand{\S}{\mathcal{S}}
\newcommand{\K}{\mathcal{K}}
\newcommand{\outside}{\mathbb{R}^3\setminus \overline{D}}
\renewcommand*{\Re}{\operatorname{Re}}
\renewcommand*{\Im}{\operatorname{Im}}
\newcommand{\de}{\: \mathrm{d}}
\newcommand{\R}{\mathbb{R}}
\newcommand{\iu}{\mathrm{i}\mkern1mu}
\newcommand{\C}{\mathcal{C}}
\renewcommand{\L}{\mathcal{L}}
\renewcommand{\v}{\mathbf{v}}
\newcommand{\w}{\mathbf{w}}
\renewcommand{\k}{\mathbf{k}}
\renewcommand{\P}{\mathcal{P}}
\renewcommand{\S}{\mathcal{S}}
\newcommand{\T}{\mathcal{T}}
\newcommand{\Z}{\mathbb{Z}}
\newcommand{\p}{\partial}
\renewcommand{\epsilon}{\varepsilon}
\newcommand{\dx}{\: \mathrm{d}}
\newcommand{\ie}{\textit{i.e.}}
\newcommand{\nm}{\noalign{\smallskip}}
\newcommand{\ds}{\displaystyle}
\DeclareRobustCommand{\Chi}{{\mathpalette\irchi\relax}}
\newcommand{\irchi}[2]{\raisebox{\depth}{$#1\chi$}}
\newtheorem{thm}{Theorem}
\newtheorem{prop}[thm]{Proposition}
\newtheorem{lemma}[thm]{Lemma}
\newtheorem{remark}[thm]{Remark}
\numberwithin{equation}{section}
\numberwithin{thm}{section}
\title{Exceptional points in parity--time-symmetric subwavelength metamaterials}
\author{Habib Ammari\thanks{\footnotesize Department of Mathematics, ETH Z\"urich, R\"amistrasse 101, CH-8092 Z\"urich, Switzerland (habib.ammari@math.ethz.ch, bryn.davies@sam.math.ethz.ch, erik.orvehed.hiltunen@sam.math.ethz.ch).}\and Bryn Davies\footnotemark[1]  \and Erik Orvehed Hiltunen\footnotemark[1] \and  Hyundae Lee\thanks{\footnotesize  Department of Mathematics, Inha University,  253 Yonghyun-dong Nam-gu,  Incheon 402-751,  Korea (hdlee@inha.ac.kr).} \and Sanghyeon Yu\thanks{\footnotesize Department of Mathematics, Korea University, Seoul 02841, S. Korea (sanghyeon\_yu@korea.ac.kr).}}
\date{}
\begin{document}
	\maketitle
	
\begin{abstract}
	When sources of energy gain and loss are introduced to a wave-scattering system, the underlying mathematical formulation will be non-Hermitian. This paves the way for the existence of exceptional points, where eigenmodes are linearly dependent. The primary goal of this work is to study the existence of exceptional points in high-contrast subwavelength metamaterials. We begin by studying a parity--time-symmetric pair of subwavelength resonators and prove that this system supports asymptotic exceptional points. {These are points at which the subwavelength eigenvalues and eigenvectors coincide at leading order in the asymptotic parameters.} We then investigate further properties of parity--time-symmetric subwavelength metamaterials. First, we study the exotic scattering behaviour of a metascreen {composed of repeating parity--time-symmetric pairs of subwavelength resonators}. We prove that the non-Hermitian nature of this structure means that it exhibits asymptotic unidirectional reflectionless transmission at certain frequencies and demonstrate extraordinary transmission close to these frequencies.
Thereafter,	we consider cavities {containing} many small resonators and use homogenization theory to show that non-Hermitian behaviour can be replicated at the macroscale.  
\end{abstract}
	\vspace{0.5cm}
	\noindent{\textbf{Mathematics Subject Classification (MSC2000):} 35J05, 35C20, 35P20.
		
		\vspace{0.2cm}
		
		\noindent{\textbf{Keywords:}} $\mathcal{PT}$ symmetry, exceptional points, subwavelength resonance, metamaterials, unidirectional reflection, extraordinary transmission, homogenization
		\vspace{0.5cm}

	\section{Introduction}
	
	Exceptional points are parameter values at which a system's eigenvalues and their associated eigenvectors simultaneously coincide. This phenomenon has been observed in a variety of quantum-mechanical, optical, acoustic and photonic settings. We will investigate exceptional points in a setting where the underlying system is non-Hermitian, in the hope of finding linearly dependent eigenvectors. A prominent class of non-Hermitian systems where exceptional points are well known to occur are structures with so-called \emph{parity-time} or $\mathcal{PT}$ \emph{symmetry} \cite{feng2017non, heiss2012physics, miri2019exceptional, ozdemir2019parity}. The exceptional points in such systems originate from the fact that the spectrum of a $\mathcal{PT}$-symmetric operator is conjugate symmetric. In this work, we study the occurrence of exceptional points in structures composed of subwavelength resonators. These are material inclusions with parameters that differ greatly from those of the background medium, the large material contrast meaning that they experience resonant behaviour in response to critical wavelengths much greater than their size. Such structures, often known as \emph{subwavelength metamaterials} to highlight their complex microscopic structure, can exhibit exotic scattering properties and appear in a variety of photonic and phononic applications \cite{ammari2018mathematical, kaina2015negative, liu2000locally, ozbay2006plasmonics}.
	
	We begin by studying a pair of high-contrast subwavelength resonators. This two-body system, which is often known as a \emph{dimer}, is known to exhibit two subwavelength resonant modes \cite{ammari2017double}. We examine the case of non-real material parameters, which corresponds to systems with gain and loss (represented, respectively, by positive or negative imaginary parts), see \Cref{figure1}. The geometry and material parameters are chosen so that the structure is $\mathcal{PT}$-symmetric, which means that the structure is symmetric and that the gain on one resonator has the same magnitude as the loss on the other. {This model problem is a mathematically concise characterization of physical models. Structures based on the principles explored in this work have been theoretically and experimentally studied in acoustics} \cite{shi2016accessing,yi2019conditionally, zhu2014acoustics} {as well as in optics and photonics} \cite{chen2017exceptional, miri2019exceptional, ozdemir2019parity}. {In practice, implementing gain in physical systems is difficult (whereas loss arises through damping). This can be achieved either through the use of amplification} \cite{popa2014activemeta,shi2016accessing, zhu2014acoustics} {or by considering a ``passive'' version of the structure, where loss and greater loss are used to model gain and loss, respectively, along with an overall damping factor} \cite{ozdemir2019parity}. For our model problem, we will prove that the resonant modes can be approximated by the eigenstates of a $2\times 2$ matrix, known as the \emph{weighted capacitance matrix} (Theorem \ref{thm:res}). Then, we show that if these parameters are suitably tuned then the two eigenvalues and eigenvectors of the weighted capacitance matrix coincide, giving what we will refer to as an \emph{asymptotic exceptional point} (Theorem \ref{thm:exceptional}).

	{Due to energy input and output in non-Hermitian systems, standard energy conservation relations no longer apply.} This can result in exotic scattering behaviour and ``generalized energy conservation'' relations \cite{li2012conservation}. While being impossible in Hermitian systems, $\mathcal{PT}$-symmetric structures can have frequencies at which the reflection is zero when the wave is impinging from one side, but non-zero when the wave is impinging from the opposite side \cite{lin2011unidirectional,longhi2011invisibility,yuan2019unidirectional}. We will refer to such case as \emph{unidirectional reflectionless transmission}, or \emph{unidirectional reflection} for short. Also, since energy conservation no longer applies, the scattering coefficients are not bounded by unity, and could possibly be very large. We refer to this as \emph{extraordinary transmission}, which has been demonstrated to occur in both optical and acoustic systems \cite{wu2017ultrastrong,yi2019conditionally,zhu2020influence}.
	
	We study unidirectional reflection and extraordinary transmission in an unbounded, $\mathcal{PT}$-symmetric structure at subwavelength frequencies. This structure is composed of periodically repeating $\mathcal{PT}$-symmetric dimers in a thin sheet, a \emph{metascreen} (see Figure \ref{fig:metascreen}). We will show, in particular, that the reflection coefficients approximately vanish for frequencies close to a critical frequency (Theorem \ref{thm:unidir}). Moreover, as the magnitude of the gain and loss increases, there is a shift in these approximate zeros: the zero of one of the reflection coefficients will be shifted upwards and the other will be shifted downwards. Additionally, for a certain magnitude of the gain/loss, extraordinary transmission will occur at the critical frequency. We emphasize that, unlike previous work based on coupled-mode approximations \cite{lin2011unidirectional,rivolta2016sidecoupled} or perturbation theory \cite{yuan2019unidirectional} which are more formal, the methods presented here provide a mathematically rigorous framework for unidirectional reflectionless transmission. Furthermore, the obtained results are valid even in regimes with large gain and loss. 
	
Structures that are poised at an exceptional point have also applications in enhanced sensors. {These sensors can detect a variety of phenomena such as vibrations, changes in temperature and the presence of small particles such as viruses and nanoparticles. Such phenomena will induce changes in the properties of the sensor, which allows them to be measured. Typically, these changes will be proportional to the strength of the perturbation which is measured.} However, in the case of a sensor that is poised at an exceptional point, the higher-order nature of the singularity means that the output will be greatly enhanced. In particular, an $N$\textsuperscript{th}-order exceptional point (where $N$ eigenmodes coincide) will generally lead to an output that scales with the $N$\textsuperscript{th} root of the strength of the perturbation, meaning that it is greatly enhanced for small perturbations \cite{high-order, chen2017exceptional, hodaei2017enhanced, liu2016metrology, wiersig2014enhancing, wiersig2016sensors}.
	
	We will analyse the macroscopic properties of bounded metamaterials composed of a large number of subwavelength resonators with complex material coefficients. In particular, we consider cavities filled with large numbers of small resonators and use homogenization theory to derive effective material properties as the resonators become infinitesimally small. We show that a cavity of resonators with `fixed sign' (\emph{i.e.} all gain or all loss) converges to an effective system whose material parameters retain this property. We also observe that a structure that is $\mathcal{PT}$-symmetric at the microscale will have real-valued material parameters at the macroscale (Theorem \ref{thm:homogenized}).

The results of this paper on exceptional points pave the way for the mathematical analysis of the phenomenon of topologically protected edge states in  systems of subwavelength resonators with gain and loss. In \cite{active}, it is demonstrated that localized edge modes appear in a periodic structure of subwavelength resonators with a defect in the gain/loss distribution. Similarly to the Hermitian case (see, for instance, \cite{ammari2019topological,drouot1,fefferman,fefferman_mms}), these edge modes can be attributed to the winding of the eigenmodes. In the non-Hermitian case the topological invariants fail to be quantized, but can nevertheless predict the existence of localized edge modes. Moreover, the new expansions of the Green's functions obtained in Section \ref{sec:unidir}, which are uniformly valid when the frequency and the quasiperiodicity approach zero,   turn out to be the key for demonstrating Fano-type resonances (see, for instance, \cite{hai}) in periodic systems of subwavelength resonators \cite{fano}.

	\section{Exceptional points of two resonators} \label{sec:exceptional} 
	\begin{figure}
		\centering
		\begin{tikzpicture}
		\draw (-1.5,0) circle (1);
		\draw (1.5,0) circle (1);
		\node at (-2.5,1){$D_1$};
		\node at (2.5,1){$D_2$};
		\node at (-1.5,0){\small $v_1$};
		\node at (1.5,0){\small $v_2$};
		\node at (-0.73,0.3){\small $\delta_1$};
		\node at (0.8,0.3){\small $\delta_2$};
		\node at (0,0.7){\small $v$};
		\end{tikzpicture}
		\caption{Two subwavelength resonators $D_1$ and $D_2$ with wave speeds $v_1, v_2$, and wave speed $v$ in the surrounding material. The contrast between the $i$\textsuperscript{th} resonator and the surrounding material is described by $\delta_i$, {which is assumed to be small}. This system is $\mathcal{PT}$-symmetric if $D_1 = - D_2$ and $v_1^ 2\delta_1 = \overline{v_2^2\delta_2}$. \label{figure1}}
	\end{figure}
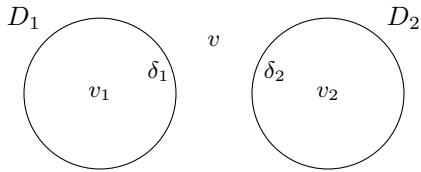
	We will, first, study a structure composed of two resonators $D_1, D_2 \subset \R^3$ which are connected domains such that $\p D_i \in C^{1,s}, 0 < s < 1$. The dimer $D$ is defined as $D= D_1\cup D_2$. We assume that the wave speed $v_i$ inside the $i$\textsuperscript{th} resonator $D_i$ is complex while the wave speed $v$ in the surrounding material is real. Denoting the frequency of the waves by $\omega >0$, we define the wave numbers, for $i=1,2$, as
	$$k = \frac{\omega}{v}, \quad k_i = \frac{\omega}{v_i}.$$	
	We denote the material contrast parameters of the two resonators by $\delta_i, \ i=1,2$, which are also complex-valued and will be assumed to be small in modulus. We study the scattering problem
	\begin{equation} \label{eq:scattering}
	\left\{
	\begin{array} {ll}
	\ds \Delta {u}+ k^2 {u}  = 0 & \text{in } \R^3 \setminus \overline{D}, \\[0.3em]
	\ds \Delta {u}+ k_i^2 {u}  = 0 & \text{in } D_i, \ i=1,2, \\
	\nm
	\ds  {u}|_{+} -{u}|_{-}  = 0  & \text{on } \partial D, \\
	\nm
	\ds  \delta_i \frac{\partial {u}}{\partial \nu} \bigg|_{+} - \frac{\partial {u}}{\partial \nu} \bigg|_{-} = 0 & \text{on } \partial D_i, \ i=1,2, \\
	\nm
	\ds u(x) - u^{\mathrm{in}}(x) & \text{satisfies the Sommerfeld radiation} \\ & \text{condition as }  |x| \rightarrow \infty,
	\end{array}
	\right.
	\end{equation}
	where $|_+$ and $|_-$ denote the limits from the outside and inside of $D$. Here, {$u^{\mathrm{in}}$} is the incident field which we assume satisfies $\Delta u^{\mathrm{in}} + k^2u^{\mathrm{in}} = 0$ in $\R^3$ and the Sommerfeld radiation condition is given by
	\begin{equation}\label{eq:sommerfeld}
	\lim_{|x|\rightarrow \infty}|x| \left(\frac{\p}{\p |x|} -\iu k\right)u = 0,
	\end{equation}
	which corresponds to the case where $u$ radiates energy  outwards (and not inwards).

	Next, we describe the $\P\T$-symmetry of the problem. The parity operator $\mathcal{P}: \R^3 \rightarrow \R^3 $ and the time-reversal operator $\T: \mathbb{C} \rightarrow \mathbb{C}$ are given, respectively, by
	$$\P(x) = -x, \qquad \T(z) = \overline{z}.$$
	We assume that the dimer $D$ is $\P\T$-symmetric, by which we mean that
	$$D_1 = \P D_2, \quad \text{and} \quad  v_1^ 2\delta_1 = \T(v_2^2\delta_2).$$
	We will see in \Cref{sec:cap_matrix} that this is the assumption required to ensure that the {differential system} is $\P\T$-symmetric at leading order. We define $\delta:=|\delta_1|$, and assume that
	$$\delta \ll 1, \quad \delta_2 = O(\delta), \quad v_i = O(1).$$
	The assumption that $\delta$ is small means that we are studying high-contrast resonators. We introduce the notation
	$$v_1^2 \delta_1 := a + \iu{b}, \qquad v_2^2 \delta_2 := a - \iu{b},$$
	for {positive}, real-valued parameters $a$ and $b$. {Under the sign convention fixed by }\eqref{eq:sommerfeld}{, the parameter $b$ represents the magnitude of the gain and loss, which are imposed on $D_1$ and $D_2$ respectively.}
	
Using analytic continuation, one can prove that the resolvent associated with the scattering problem (\ref{eq:scattering}) is meromorphic function of $\omega$ in $\mathbb{C}$.	
	We say that a frequency $\omega$ is a \emph{resonant frequency} if the real part of $\omega$ is positive and there is a non-zero solution to the problem \eqref{eq:scattering} with $u^{\mathrm{in}} = 0$. Moreover, we say that the resonant frequency $\omega$ is a subwavelength resonant frequency if {$\omega$ depends continuously on $\delta$} and $\omega\rightarrow 0$ as $\delta \rightarrow 0$. {We remark that, since the wave speed has order one as $\delta \to 0$, a subwavelength frequency corresponds to a wavelength much larger than the size of $D$ when $\delta$ is small enough.}
	
	We will assume that the frequency $\omega$ scales as $\omega = O(\delta^{1/2})$ when $\delta \to 0$ (in previous work \textit{e.g.} \cite{ammari2018minnaert,ammari2017double}, this was found to be the scaling of the subwavelength resonant frequencies). {In this limit, we will assume that $\nabla u^{\mathrm{in}}\big|_D = O(\omega)$.}
	
	The scattering problem \eqref{eq:scattering} {has been designed as a model problem for subwavelength resonators whose materials contrast greatly with the background medium. This formulation has been used to study the propagation of acoustic waves in bubbly media, in which case $\delta_i$ represents the contrast between the density inside and outside the resonator $D_i$ }\cite{ammari2018minnaert, reviewpeng}. { This is also a useful model problem for gaining intuition into the behaviour of photonic systems. The methods used here can be easily modified to study a two-dimensional version of }\eqref{eq:scattering} { (as in }\cite[Appendix B]{ammari2018minnaert}{), which describes the propagation of polarized electromagnetic waves, see \emph{e.g.}} \cite{ammari2018mathematical, ammari2003electromagnetic}. In the case of electromagnetic waves, $\delta_i$ describes the contrast in either the electric permittivity or magnetic permeability, depending on the polarization.}
	
	\subsection{Layer potential theory on bounded domains} \label{sec:layerpot}
	The solutions to the Helmholtz problem \eqref{eq:scattering} can be effectively studied using representations in terms of integral operators. In particular, let $\S_D^k$ be the single layer potential, defined by
	\begin{equation} \label{eq:Sdef}
	\S_D^k[\phi](x) := \int_{\partial D} G^k(x-y)\phi(y) \dx \sigma(y), \quad x \in \R^3,
	\end{equation}
	where $G^k(x)$ is the outgoing Helmholtz Green's function, given by
	$$
	G^k(x) := -\frac{e^{\iu k|x|}}{4\pi|x|}, \quad x \in \R^3.
	$$
	Here, ``outgoing'' refers to the fact that $G^k$ satisfies the Sommerfeld radiation condition \eqref{eq:sommerfeld}.
	
%	The single layer potential is an operator $\S_{D}^k: L^2(\partial D) \rightarrow H_{\textrm{loc}}^1(\R^3)$ defined by
%	\begin{equation*}
%	\S_D^k[\phi](x) := \int_{\partial D} G^k(x,y)\phi(y) \dx \sigma(y), \quad x \in \R^3.
%	\end{equation*}
%	$H_{\textrm{loc}}^1(\R^3)$ denotes the space of functions that, on every compact subset of $\R^3$, are square integrable and have a square integrable weak first derivative.
	
	For the single layer potential corresponding to the Laplace equation, $\S_D^0$, we will omit the superscript and write $\S_D$. {We will use the same notation for the restriction of $\S_D$ to $\p D$,} which is an operator $\S_D: L^2(\p D) \rightarrow H^1(\p D)$ and is well known to be invertible \cite[Lemma 2.26]{ammari2007polarization}. Here, $H^1(\p D)$ is the space of functions that are square integrable on $\p D$ and have a weak first derivative that is also square integrable.
	
	The Neumann-Poincar\'e operator $\K_D^{k,*}: L^2(\partial D) \rightarrow L^2(\partial D)$ is defined by
	\begin{equation*}\label{eq:K_def}
	\K_D^{k,*}[\phi](x) := \int_{\partial D} \frac{\partial }{\partial \nu_x}G^k(x-y) \phi(y) \dx \sigma(y), \quad x \in \partial D,
	\end{equation*}
	where $\partial/\partial \nu_x$ denotes the outward normal derivative at $x\in\p D$.
	
	The behaviour of $\S_D^k$ on the boundary $\partial D$ is described by the following relations, often known as \emph{jump relations},
	\begin{equation}\label{eq:jump1}
	\S_D^k[\phi]\big|_+ = \S_D^k[\phi]\big|_-,
	\end{equation}
	and
	\begin{equation}\label{eq:jump2}
	\frac{\partial }{\partial \nu}\S_D^k[\phi]\Big|_{\pm}  =  \left(\pm\frac{1}{2} I + \K_D^{k,*}\right) [\phi],
	\end{equation}
	where $|_\pm$ denote the limits from outside and inside $D$. When $k$ is small, the single layer potential satisfies \cite[Lemma 2.43]{ammari2018mathematical}
	\begin{equation} \label{eq:exp_S}
	\S_D^k =\S_D + k\S_{D,1} + O(k^2),
	\end{equation}
	where the error term is with respect to the operator norm $\|.\|_{\L(L^2(\partial D),H^1(\partial D))}$, and the operator $\S_{D,1}: L^2(\partial D) \rightarrow H^1(\partial D)$ is given by
	$$
	\S_{D,1}[\phi](x) = \frac{1}{4\pi \iu} \int_{\D} \phi \dx \sigma, \qquad x\in \D.
	$$
	Moreover, we have \cite[Lemma 2.44]{ammari2018mathematical}
	\begin{equation} \label{eq:exp_K}
	\K_D^{k,*} = \K_D^{0,*} + k^2\K_{D,2} + k^3\K_{D,3} + O(k^4),
	\end{equation}
	where the error term is with respect to the operator norm $\|.\|_{\L(L^2(\partial D),L^2(\partial D))}$ and where 
	$$\K_{D,2}[\phi](x) = \frac{1}{8\pi}\int_{\p D}\frac{(x-y)\cdot \nu_x}{|x-y|}\phi(y)\de \sigma (y), \quad \K_{D,3}[\phi](x) = \frac{\iu}{12\pi}\int_{\p D}(x-y)\cdot \nu_x\phi(y)\de \sigma (y).$$
	
	We also have the following lemma from \cite{ammari2017double}.
	\begin{lemma} \label{lem:ints}
		For any $\varphi\in L^2(\D)$ we have, for $i=1,2$,
		\begin{equation} \label{eq:properties}
		\begin{split}
		\int_{\D_i}\left(-\frac{1}{2}I+\K_D^{*}\right)[\varphi]\de\sigma=0,
		\qquad&\int_{\D_i}\left(\frac{1}{2}I+\K_D^{*}\right)[\varphi]\de\sigma=\int_{\D_i}\varphi\de\sigma,\\
		\int_{\D_i} \K_{D,2}[\varphi]\de\sigma=-\int_{D_i}\S_D[\varphi]\de x, \qquad &\int_{\D_i} \K_{D,3}[\varphi]\de\sigma=\frac{\iu|D_i|}{4\pi}\int_{\D}\varphi\de\sigma.
		\end{split}
		\end{equation}
	\end{lemma}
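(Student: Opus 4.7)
The plan is to dispatch each of the four identities using only the jump relations introduced above together with two incarnations of the divergence theorem. The first two identities are statements about $\S_D[\varphi]$ as a harmonic function on $D$, while the third and fourth reduce by Fubini to direct evaluations of an inner surface integral via the divergence theorem.

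For the first identity I would use the jump relation \eqref{eq:jump2} to rewrite $(-\tfrac{1}{2} I + \K_D^*)[\varphi]$ as the interior normal derivative $\partial_\nu \S_D[\varphi]\big|_-$ on $\partial D$. Since $G^0$ is the Laplace fundamental solution, $\S_D[\varphi]$ is harmonic on $\R^3 \setminus \partial D$ and, in particular, on $D_i$; the divergence theorem on $D_i$ therefore yields $\int_{\partial D_i}\partial_\nu \S_D[\varphi]\big|_-\, d\sigma = \int_{D_i}\Delta \S_D[\varphi]\, dx = 0$. The second identity is then immediate from the operator relation $(\tfrac{1}{2} I + \K_D^*) = (-\tfrac{1}{2} I + \K_D^*) + I$, applied to $\varphi$ and integrated over $\partial D_i$.

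For the third identity I would swap the order of integration to write
\[
\int_{\partial D_i}\K_{D,2}[\varphi]\, d\sigma = \frac{1}{8\pi}\int_{\partial D}\varphi(y)\left[\int_{\partial D_i}\frac{(x-y)\cdot \nu_x}{|x-y|}\, d\sigma(x)\right] d\sigma(y),
\]
and then compute the bracketed integral. The key observation is that the vector field $F(x) = (x-y)/|x-y|$ satisfies $\nabla_x \cdot F = 2/|x-y|$ on $\R^3 \setminus \{y\}$, so the divergence theorem evaluates the bracket as $2\int_{D_i}\frac{dx}{|x-y|}$. Substituting and recognising $-1/(4\pi|x-y|) = G^0(x-y)$ yields $-\int_{D_i}\S_D[\varphi]\, dx$ after one further application of Fubini. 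The fourth identity is similar but simpler, since $\nabla_x \cdot (x-y) \equiv 3$ gives $\int_{\partial D_i}(x-y)\cdot \nu_x\, d\sigma(x) = 3|D_i|$, a constant that can be pulled outside the $y$-integral, producing exactly $\frac{\iu|D_i|}{4\pi}\int_{\partial D}\varphi\, d\sigma$.

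The only point requiring real care, and so the main (if minor) obstacle, is justifying the use of the divergence theorem in the third identity when $y \in \overline{D_i}$: both $F(x)$ and $1/|x-y|$ are singular at $y$. I would handle this by excising a ball $B_\varepsilon(y)$ from $D_i$ and passing to the limit $\varepsilon \to 0$; the contribution over $\partial B_\varepsilon(y)$ is $O(\varepsilon^2)$ and hence vanishes, while the volume integral of $1/|x-y|$ converges by absolute integrability in three dimensions. The same excision argument simultaneously takes care of the marginal case $y \in \partial D_i$.
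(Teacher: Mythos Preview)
Your proof is correct. All four identities are dispatched exactly as one would expect: the first via the interior jump relation and harmonicity of $\S_D[\varphi]$ in $D_i$, the second by the trivial operator identity, and the third and fourth by Fubini followed by the divergence theorem applied to the vector fields $(x-y)/|x-y|$ and $x-y$, respectively. Your handling of the singularity at $y\in\overline{D_i}$ in the third identity is the right one, and the $O(\varepsilon^2)$ bound on the excised-sphere contribution is accurate.

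Note that the paper does not actually supply a proof of this lemma: it is simply quoted from \cite{ammari2017double}. Your argument is the standard one and is essentially what appears in that reference, so there is nothing substantive to compare.
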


	A thorough presentation of other properties of the layer potential operators and their use in wave-scattering problems can be found in \emph{e.g.} \cite{ammari2018mathematical, nedelec2001acoustic}. 

	\subsection{Capacitance matrix analysis} \label{sec:cap_matrix}
	
	Our approach to solving \eqref{eq:scattering} is to study the \emph{weighted capacitance matrix}. We will see that the eigenstates of this $2\times2$ matrix characterize, at leading order in $\delta$, the resonant modes of the system. This approach offers a rigorous discrete approximation to the differential problem.
	
	In order to introduce the notion of capacitance, we define the functions $\psi_j$, for $j=1,2$, as 
	\begin{equation} \label{defpsij}
	\psi_j=\S_D^{-1}[\Chi_{\p D_j}],
	\end{equation}
	where $\Chi_A:\mathbb{R}^3\to\{0,1\}$ is used to denote the characteristic function of a set $A\subset\mathbb{R}^3$. The functions $\psi_1$ and $\psi_2$ form a basis for $\ker\left(-\frac{1}{2}I+\K_D^{*}\right)$ {(indeed, this follows from} \eqref{eq:jump2} {and the fact that the constant functions are the Neumann eigenfunctions of $-\Delta$ in $D$ with zero eigenvalue)}. The capacitance coefficients $C_{ij}$, for $i,j=1,2$, are then defined as
	\begin{equation*}
	C_{ij}=-\int_{\D_i} \psi_j\de\sigma,
	\end{equation*}
	and the capacitance matrix is the matrix $C = (C_{ij})$. Finally, we define the \emph{weighted} capacitance matrix $C^v=(C^v_{ij})$ as
	%\begin{equation}
	%C^v_{1j}=v_1^2\delta_1 C_{1j}, \qquad C^v_{2j}={v_2^2\delta_2\right }C_{2,j}.
	%\end{equation}
	%This matrix is an analogue of the capacitance matrix studied in \cite{ammari2017double}, modified to account for the different material parameters inside the different resonators. Note that
	\begin{equation} \label{eq:cap_matrix}
	C^v := V C = \begin{pmatrix}
	v_1^2\delta_1 C_{11} & v_1^2\delta_1 C_{12} \\[0.3em]
	{v_2^2\delta_2}C_{21} & {v_2^2\delta_2}C_{22}
	\end{pmatrix}, \qquad V := \begin{pmatrix} v_1^2\delta_1 & 0 \\ 0 & v_2^2\delta_2 \end{pmatrix}.
	\end{equation}
	This has been weighted to account for the different material parameters inside the different resonators, see \emph{e.g.} \cite{ammari2020close, ammari2017double} for other variants in slightly different settings. It is well known that $C_{21} = C_{12}$, $C_{12} < 0$ and $C_{11} > -C_{12}$, while  the symmetry assumption $D_1 = \P D_2$ implies that $C_{11} = C_{22}$ (see \emph{e.g.} \cite{ammari2017double, diaz2011positivity, lekner2011capacitance}).
		
	We define the functions $S_1^\omega, S_2^\omega$ as	
	$$S_1^\omega(x) = \begin{cases}
	\S_{D}^{k}[\psi_1](x), & x\in\outside,\\
	\S_{D}^{k_i}[\psi_1](x), & x\in D_i, \ i=1,2,\\
	\end{cases} \qquad S_2^\omega(x) = \begin{cases}
	\S_{D}^{k}[\psi_2](x), & x\in\outside,\\
	\S_{D}^{k_i}[\psi_2](x), & x\in D_i, \ i=1,2.\\
	\end{cases} 
	$$
	
	%\begin{thm} \label{thm:res}
	%	As $\delta \rightarrow 0$, the resonance frequencies satisfy the asymptotic formula
	%	$$\omega_i = \sqrt{\frac{\lambda_i}{|D_1|}} + O(\delta), \quad i = 1,2,$$
	%	where $|D_1|$ is the volume of a single resonator. Here, $\lambda_i$ are the eigenvalues of the weighted capacitance matrix $C^v$.
	%\end{thm}
	\begin{lemma} \label{lem:modal}
		As $\omega\rightarrow 0$, the solution to the scattering problem \eqref{eq:scattering} can be written as
		\begin{equation*} 
		u-u^{\mathrm{in}} = q_1S_1^\omega + q_2S_2^\omega - \S_D^k\left[\S_D^{-1}[u^{\mathrm{in}}]\right] + O(\omega),
		\end{equation*} 
		for constants $q_1$ and $q_2$ which satisfy the problem
		\begin{equation}\label{eq:eval_C}
		\left(C^v-\omega^2|D_1|I\right)\begin{pmatrix}q_1\\q_2\end{pmatrix}
		=-
		\begin{pmatrix} {v_1^2\delta_1}\int_{\D_1}\S_D^{-1}[u^{\mathrm{in}}]\de\sigma \\[0.3em]
		{v_2^2\delta_2}\int_{\D_2}\S_D^{-1}[u^{\mathrm{in}}]\de\sigma \end{pmatrix} + O(\delta \omega+\omega^3).
		\end{equation}
	\end{lemma}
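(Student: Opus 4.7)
The plan is to represent the solution via layer potentials, turn the transmission conditions into a coupled pair of boundary integral equations, expand in the small parameters $\omega$ and $\delta$ using \eqref{eq:exp_S}--\eqref{eq:exp_K}, and then integrate against $\Chi_{\p D_i}$ to obtain a finite-dimensional system. Specifically, I would seek $u$ in the form $u = u^{\mathrm{in}} + \S_D^k[\phi]$ in $\outside$ and $u = \S_D^{k_i}[\psi]$ in $D_i$, for densities $\phi, \psi \in L^2(\p D)$. The transmission conditions in \eqref{eq:scattering}, combined with the jump relations \eqref{eq:jump1}--\eqref{eq:jump2}, yield on each $\p D_i$ the coupled pair $\S_D^{k_i}[\psi] - \S_D^k[\phi] = u^{\mathrm{in}}$ and $(-\tfrac12 I + \K_D^{k_i,*})[\psi] - \delta_i(\tfrac12 I + \K_D^{k,*})[\phi] = \delta_i\, \p u^{\mathrm{in}}/\p\nu$.

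At leading order, the first equation gives $\psi - \phi = \S_D^{-1}[u^{\mathrm{in}}] + O(\omega)$. Since $\delta = O(\omega^2)$ and $\nabla u^{\mathrm{in}}|_D = O(\omega)$, the second equation reduces to $(-\tfrac12 I + \K_D^{0,*})[\psi] = O(\omega^2)$, forcing $\psi = q_1\psi_1 + q_2\psi_2 + O(\omega)$ for scalars $q_1, q_2$, and hence $\phi = q_1\psi_1 + q_2\psi_2 - \S_D^{-1}[u^{\mathrm{in}}] + O(\omega)$. Substituting these back into the ansatz yields the claimed representation in $\outside$ directly; inside $D$ the same form follows because $\S_D^k[\S_D^{-1}[u^{\mathrm{in}}]] - u^{\mathrm{in}}$ solves Helmholtz with wavenumber $k$ in $D$ with Dirichlet data of order $\omega$, hence is $O(\omega)$ in $D$ by standard elliptic estimates.

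To extract \eqref{eq:eval_C}, I would integrate the second boundary equation over $\p D_i$ and invoke Lemma \ref{lem:ints}. Expanding $\K_D^{k_i,*} = \K_D^{0,*} + k_i^2 \K_{D,2} + O(\omega^3)$, the contribution of $\psi$ becomes $-k_i^2 \int_{D_i}\S_D[\psi]\de x + O(\omega^3)$, and since $\S_D[\psi_j]$ is harmonic in $D$ with boundary values $\Chi_{\p D_j}$ it equals $\delta_{ij}$ in $D_i$, producing $-k_i^2 |D_i| q_i$. The $\phi$-contribution simplifies, via $\int_{\p D_i}(\tfrac12 I + \K_D^*)[\phi]\de\sigma = \int_{\p D_i}\phi\de\sigma$, to $-\delta_i(C_{i1}q_1 + C_{i2}q_2) - \delta_i \int_{\p D_i}\S_D^{-1}[u^{\mathrm{in}}]\de\sigma$ up to an $O(\delta\omega)$ remainder, while the right-hand side is $\delta_i\int_{\p D_i}\p_\nu u^{\mathrm{in}}\de\sigma = -\delta_i k^2 \int_{D_i} u^{\mathrm{in}}\de x = O(\delta\omega^2)$ by the divergence theorem. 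Multiplying row $i$ by $v_i^2$ to convert $v_i^2 k_i^2 = \omega^2$, and using the $\P\T$-symmetry $|D_1| = |D_2|$, assembles \eqref{eq:eval_C}.

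The main obstacle is the careful bookkeeping of the two small parameters: $\omega$ and $\delta$ enter the two transmission equations asymmetrically, and the stated remainder $O(\delta\omega + \omega^3)$ arises from the combination of the $k^3 \K_{D,3}$ correction in $\K_D^{k_i,*}$, the $O(\omega)$ subleading part of $\psi$ amplified by the $k_i^2 \K_{D,2}$ factor, and the $O(\omega)$ correction to $\phi$ weighted by $\delta_i$. Identifying the coefficient $|D_1|$ of the identity also requires verifying $\S_D[\psi_j]|_{D_i} = \delta_{ij}$ and exploiting $D_1 = \P D_2$, neither of which is automatic from the integral equations alone.
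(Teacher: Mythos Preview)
Your proposal is correct and follows essentially the same route as the paper: layer-potential representation, jump relations, small-$\omega$ expansions \eqref{eq:exp_S}--\eqref{eq:exp_K}, projection onto $\ker(-\tfrac12 I+\K_D^*)=\mathrm{span}\{\psi_1,\psi_2\}$, and integration over $\p D_i$ via Lemma~\ref{lem:ints}. The only cosmetic differences are that the paper names the inside and outside densities $\phi,\psi$ in the opposite order and first substitutes one into the other before integrating, whereas you integrate the coupled system directly; your observation $\S_D[\psi_j]|_{D_i}=\delta_{ij}$ is exactly what the paper uses implicitly when it writes $\int_{D_i}\S_D[\phi]\,\de x = q_i|D_1|$.
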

	\begin{proof}
		The solutions can be represented as 
		\begin{equation} \label{eq:layer_potential_representation}
		u = \begin{cases}
		u^{\mathrm{in}}(x)+\S_{D}^{k}[\psi](x), & x\in\outside,\\
		\S_{D}^{k_i}[\phi](x), & x\in D_i, \ i=1,2,
		\end{cases}
		\end{equation} 
		for some surface potentials $(\phi,\psi)\in L^2(\D)\times L^2(\D)$, which must be chosen so that $u$ satisfies the transmission conditions across $\D$.
		%This is equivalent to satisfying
		%\begin{equation} \label{eq:A_matrix_equation}
		%\A(\omega,\delta)
		%\begin{pmatrix} \phi_1 \\ \phi_2 \\ \psi_1 \\ \psi_2 \end{pmatrix} = \begin{pmatrix} u^{\mathrm{in}} \\ \delta\ddp{u^{\mathrm{in}}}{\nu_x}\end{pmatrix},
		%\end{equation}
		%where	
		%The densities corresponding to a resonant mode lie in the null space of the operator
		%%\begin{equation*} \label{eq:A_matrix_defn}
		%\A(\omega,\delta):=
		%\left[ {\begin{array}{cccc}
		%	\S_{D}^{\omega/v_1} & 0 & -\S_{D}^{\omega/v} \\
		%	0 & \S_{D}^{\omega/{v_2}} & -\S_{D}^{\omega/v} \\
		%	-\frac{1}{2}I+\K_{D}^{\omega/v_1,*} & 0 & -\delta(\frac{1}{2}I+\K_{D}^{{\omega/v},*}) \\
		%	0 & -\frac{1}{2}I+\K_{D}^{\omega/{v_2},*} & -\delta(\frac{1}{2}I+\K_{D}^{{\omega/v},*})
		%	\end{array} } \right],
		%\end{equation*}
		Using the jump conditions \eqref{eq:jump1} and \eqref{eq:jump2}, we see that in order to satisfy the transmission conditions, the layer densities $\phi$ and $\psi$ must satisfy
		\begin{align*}
		\S_{D}^{k_i}[\phi]-\S_{D}^{k}[\psi]=u^{\mathrm{in}} \quad \text{on}\ \D_i, \\
		\left(-\frac{1}{2}I+\K_{D}^{k_i,*}\right)[\phi]-\delta_i\left(\frac{1}{2}I+\K_{D}^{k,*}\right)[\psi]=\delta_i \ddp{u^{\mathrm{in}}}{\nu} \quad \text{on} \ \D_i,
		\end{align*}
		for $i=1,2$, where $I$ is the identity operator on $L^2(\D)$.	
		% \begin{equation*}  \label{eq:boundary_quasi}
		%\mathcal{A}^\alpha(\omega, \delta)[\Phi^\alpha] =0,  
		%\end{equation*}
		%where
		%\begin{equation} \label{eq:A_quasi_defn}
		%\A^\alpha(\omega,\delta) = 
		%\begin{pmatrix}
		%\hat\S_{D}^{\omega} & -\S_{D}^{\alpha,\omega} \\
		%-\frac{1}{2}I + \hat\K_{D}^{\omega,*} & -\delta\left(\frac{1}{2}I + \left(\mathcal{K}_D^{ -\alpha,\omega}\right)^*\right) \\
		%\end{pmatrix}, \qquad \Phi^\alpha =  \begin{pmatrix}
		%\phi^{\alpha,i} \\
		%\phi^{\alpha,o}
		%\end{pmatrix},
		%\end{equation}
		%and
		%\begin{equation} \label{add1} \hat{\S}_D^\omega = \begin{pmatrix}
		%\S_{D_1}^{\omega} & 0 \\ 0 & \S_{D_2}^{\omega} \end{pmatrix}, \qquad \hat{\K}_D^{\omega,*} = \begin{pmatrix}
		%\K_{D_1}^{\omega,*} & 0 \\ 0 & \K_{D_2}^{\omega,*} \end{pmatrix}.\end{equation}
		From the asymptotic expansions \eqref{eq:exp_S} and \eqref{eq:exp_K} and the assumption that $\nabla u^{\mathrm{in}} \big|_D = O(\omega)$ we have that 
		\begin{gather} 
		% \begin{split}
		\S_D[\phi-\psi]=u^{\mathrm{in}}+O(\omega) \quad \text{on} \ \D_1\cup\D_2, \label{eq:first}\\
		\left(-\frac{1}{2}I+\K_D^*+\frac{\omega^2}{v_i^2}\K_{D,2}\right)[\phi]-\delta_i\left(\frac{1}{2}I+\K_D^*\right)[\psi]=O(\delta\omega+\omega^3) \quad \text{on} \ \D_i. \nonumber
		%\end{split}
		\end{gather}
		From \eqref{eq:first} and the fact that $\S_D$ is invertible we can see that
		\begin{equation}\label{eq:psi}
		\psi=\phi-\S_D^{-1}[u^{\mathrm{in}}]+O(\omega).
		\end{equation} Thus, we are left with the equations
		\begin{align} %\label{eq:second}
		\left(-\frac{1}{2}I+\K_D^*+\frac{\omega^2}{{v}_i^2}\K_{D,2}-\delta_i\left(\frac{1}{2}I+\K_D^*\right)\right)[\phi]=-\delta_i \left(\frac{1}{2}I+\K_D^*\right)\S_D^{-1}[u^{\mathrm{in}}]+O(\delta\omega+\omega^3), \label{eq:phi}
		\end{align}
		on $\D_i, i=1,2$. Integrating \eqref{eq:phi} over $\D_i$, and using \Cref{lem:ints} gives us that
		\begin{equation*}
		-\omega^2\int_{D_i}\S_D[\phi]\de x -{v}_i^2\delta_i\int_{\D_i}\phi\de\sigma=-{v_i^2\delta_i}\int_{\D_i}\S_D^{-1}[u^{\mathrm{in}}]\de\sigma+O(\delta\omega+\omega^3).  \label{eq:D}
		\end{equation*}
		At leading order, \eqref{eq:phi} says that $\left(-\frac{1}{2}I+\K_D^{*}\right)[\phi]=0$ so, since $\psi_1$ and $\psi_2$, defined by (\ref{defpsij}), form a basis for $\ker\left(-\frac{1}{2}I+\K_D^{*}\right)$ (see, for instance, \cite{ammari2018mathematical}), the solution can be written as
		\begin{equation} \label{eq:psi_basis}
		\phi=q_1\psi_1+q_2\psi_2+O(\omega^2+\delta),
		\end{equation}
		for constants $q_1,q_2=O(1)$. Making this substitution we reach the problem
		\begin{equation}\label{eq:eval_C_proof}
		\left(C^v-\omega^2|D_1|I\right)\begin{pmatrix}q_1\\q_2\end{pmatrix}
		=-
		\begin{pmatrix} {v_1^2\delta_1}\int_{\D_1}\S_D^{-1}[u^{\mathrm{in}}]\de\sigma \\[0.3em]
		{v_2^2\delta_2}\int_{\D_2}\S_D^{-1}[u^{\mathrm{in}}]\de\sigma \end{pmatrix} + O(\delta \omega+\omega^3).
		\end{equation}
		The result now follows from \eqref{eq:layer_potential_representation} combined with the expressions for $\phi, \psi$ in \eqref{eq:psi}, \eqref{eq:psi_basis} and \eqref{eq:eval_C_proof}.
	\end{proof}
	
	\begin{thm} \label{thm:res}
		As $\delta \rightarrow 0$, the subwavelength resonant frequencies satisfy the asymptotic formula
		$$\omega_i = \sqrt{\frac{\lambda_i}{|D_1|}} + O(\delta), \quad i = 1,2,$$
		where $|D_1|$ is the volume of a single resonator and the branch of the square root is chosen with positive real part. Here, $\lambda_i$ are the eigenvalues of the weighted capacitance matrix $C^v$.
	\end{thm}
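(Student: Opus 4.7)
The plan is to apply Lemma \ref{lem:modal} with zero incident field, so that the existence of a subwavelength resonance amounts to the existence of a nonzero vector $(q_1,q_2)$ solving
\[
\bigl(C^v - \omega^2|D_1|I\bigr)\begin{pmatrix}q_1\\q_2\end{pmatrix} = O(\delta\omega + \omega^3).
\]
Under the subwavelength scaling ansatz $\omega = O(\delta^{1/2})$ the matrix $C^v - \omega^2 |D_1| I$ has entries of size $O(\delta)$ while the remainder on the right-hand side is $O(\delta^{3/2})$, so at leading order the existence of a nonzero solution forces $\omega^2 |D_1|$ to be an eigenvalue $\lambda_i$ of $C^v$. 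Since $C^v = VC = O(\delta)$ its eigenvalues satisfy $\lambda_i = O(\delta)$, which confirms $\omega = O(\delta^{1/2})$ and makes the ansatz self-consistent.

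To turn this leading-order heuristic into the claimed $O(\delta)$ error bound, I would recast the resonance condition as
\[
\det\!\bigl(C^v - \omega^2|D_1|I + E(\omega)\bigr) = 0,
\]
where $E(\omega)$ is a $2\times 2$ matrix of size $O(\delta\omega + \omega^3) = O(\delta^{3/2})$ collected from the remainder in Lemma \ref{lem:modal}. Expanding this determinant and using that the entries of $C^v - \omega^2|D_1|I$ are $O(\delta)$, the contributions from $E(\omega)$ are of order $O(\delta^{3/2})\cdot O(\delta) = O(\delta^{5/2})$, so with the auxiliary variable $\mu := \omega^2 |D_1|$ the resonance condition reduces to $(\mu - \lambda_1)(\mu - \lambda_2) = O(\delta^{5/2})$. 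A Rouch\'e argument applied to the holomorphic function $\mu \mapsto (\mu - \lambda_1)(\mu - \lambda_2)$ on disks of radius comparable to $|\lambda_1 - \lambda_2|$ around each $\lambda_i$ (in the generic case of simple eigenvalues) then produces perturbed roots $\mu_i = \lambda_i + O(\delta^{3/2})$. Taking the square root with the branch having positive real part, and noting that $\sqrt{\lambda_i/|D_1|} = O(\delta^{1/2})$, a first-order Taylor expansion converts the $O(\delta^{3/2})$ perturbation of $\omega^2$ into an $O(\delta)$ perturbation of $\omega$, which is the stated asymptotic formula.

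The main obstacle is this final perturbation step: because the unperturbed matrix $C^v - \omega^2|D_1|I$ is itself only of size $\delta$, one must carefully verify that the $O(\delta^{3/2})$ error does not swamp the eigenvalue separation. This is why I would work at the level of $\mu = \omega^2|D_1|$, where the algebraic structure is that of a perturbed quadratic with leading coefficient $1$, rather than directly at the level of $\omega$ where the small size of $C^v$ would obscure the estimate. All the remaining ingredients---invertibility of $\S_D$, the fact that $\psi_1,\psi_2$ form a basis of $\ker(-\tfrac12 I + \K_D^*)$, and the small-$k$ expansions of $\S_D^k$ and $\K_D^{k,*}$---have already been incorporated into Lemma \ref{lem:modal}, so no further layer-potential analysis is required at this stage.
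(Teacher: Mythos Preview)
Your high-level approach coincides with the paper's: set $u^{\mathrm{in}}=0$ in Lemma~\ref{lem:modal} so that a nontrivial $(q_1,q_2)$ forces $\omega^2|D_1|$ to be an eigenvalue of $C^v$. The paper's proof is in fact just that single sentence and does not spell out the $O(\delta)$ remainder at all.

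Your Rouch\'e-type elaboration is a sensible way to supply that justification, with two caveats. First, the step of ``collecting the remainder of Lemma~\ref{lem:modal} into a matrix $E(\omega)$'' is not literally delivered by the lemma as stated: the $O(\delta\omega+\omega^3)$ there is a bound on a vector, not a matrix perturbation of the $2\times2$ system acting on $q$. Making this precise means returning to the full integral-operator pencil $\mathcal{A}(\omega,\delta)$ from the layer-potential ansatz and invoking Gohberg--Sigal (generalized Rouch\'e) theory for analytic operator-valued functions, which is the route taken in the cited Minnaert-resonance papers and is what the paper is implicitly relying on. Second, your disk argument explicitly assumes simple eigenvalues; at the exceptional point $\lambda_1=\lambda_2$ your estimate $(\mu-\lambda_1)(\mu-\lambda_2)=O(\delta^{5/2})$ only yields $\mu_i-\lambda_i=O(\delta^{5/4})$ and hence $\omega_i-\sqrt{\lambda_i/|D_1|}=O(\delta^{3/4})$, which you flag parenthetically but do not resolve.
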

	\begin{proof}
		If $u^{\mathrm{in}} = 0$, we find from \Cref{lem:modal} that there is a non-zero solution $q_1,q_2$ to the eigenvalue problem \eqref{eq:eval_C} precisely when $\omega^2|D_1|$ is an eigenvalue of $C^v$.
	\end{proof}

%	The eigenvalues of the matrix $C^v$ are given by
%	\begin{equation}\label{eq:eval}
%	\lambda_i={a} C_{11} + (-1)^i \sqrt{{a}^2C_{12}^2-{b}^2(C_{11}^2 - C_{12}^2)}.
%	\end{equation}
%	The following theorem describes the asymptotic exceptional point of the resonator dimer, which occurs when $\lambda_1=\lambda_2$.
%	\begin{thm}\label{thm:exceptional}
%		There is a magnitude ${b}_0 = {b}_0({a}) > 0$ of the gain/loss such that the resonator dimer has an asymptotic exceptional point: the frequencies $\omega_1$ and $\omega_2$, and corresponding eigenmodes, coincide to leading order in $\delta$, as $\delta\to0$. Moreover, to leading order in $\delta$, we have
%		\begin{align*}
%		&\text{Case } {b} < {b}_0: \qquad \omega_1 \text{ and } \omega_2 \text{ are real, and } \omega_1 < \omega_2 +O(\delta), \\ 
%		&\text{Case } {b} > {b}_0: \qquad \omega_1 \text{ and } \omega_2 \text{ are non-real, and } \omega_1 = \overline{\omega_2}+O(\delta),
%		\end{align*}
%	as $\delta \to 0$.
%	\end{thm}

	Let $\lambda_i, \v_i = \left(\begin{smallmatrix} \v_i^1 \\ \v_i^2\end{smallmatrix} \right)$ denote an eigenpair of $C^v$. From \Cref{thm:res}, and from \eqref{eq:psi_basis} and \eqref{eq:layer_potential_representation}, it follows that the subwavelength resonant frequencies and corresponding eigenmodes satisfy the asymptotic formulas
	$$\omega_i = \omega_1^{(0)} + O(\delta), \quad u_i = u_i^{(0)} + O(\delta^{1/2}),\quad\text{as }\delta\to0,$$ 
	where
	$$\omega_1^{(0)} := \sqrt{\frac{\lambda_i}{|D_1|}}, \quad u_i^{(0)} := \v_i^{1}S_1^\omega + \v_i^{2}S_2^\omega.$$
	We can compute that the eigenvalues of the matrix $C^v$ are given by
	\begin{equation}\label{eq:eval}
	\lambda_i={a} C_{11} + (-1)^i \sqrt{{a}^2C_{12}^2-{b}^2(C_{11}^2 - C_{12}^2)}.
	\end{equation}
	We then have the following theorem on the dimer's asymptotic exceptional points, which occur when $\lambda_1=\lambda_2$.
	\begin{thm}\label{thm:exceptional}
	There is a magnitude ${b}_0 = {b}_0({a}) > 0$ of the gain/loss such that the resonator dimer has an asymptotic exceptional point. Specifically, at $b=b_0$,
	$$\omega_1^{(0)} = \omega_2^{(0)}, \qquad u_1^{(0)} = Ku_2^{(0)},$$ 
	for some constant $K$, while
	\begin{align*}
		&\text{if } {b} < {b}_0: \qquad \omega_1^{(0)} \text{ and } \omega_2^{(0)} \text{ are real, and } \omega_1^{(0)} < \omega_2^{(0)}, \\ 
		&\text{if } {b} > {b}_0: \qquad \omega_1^{(0)} \text{ and } \omega_2^{(0)} \text{ are non-real, and } \omega_1^{(0)} = \overline{\omega_2^{(0)}}.	
	\end{align*}
\end{thm}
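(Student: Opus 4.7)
The plan is to read everything directly off the explicit eigenvalue formula \eqref{eq:eval}. Set
\[
\Delta(b) := a^2 C_{12}^2 - b^2(C_{11}^2 - C_{12}^2),
\]
so that $\lambda_{1,2} = a C_{11} \mp \sqrt{\Delta(b)}$. The capacitance inequalities recalled after \eqref{eq:cap_matrix}, namely $C_{12} < 0$ and $C_{11} > -C_{12} > 0$, imply $C_{11}^2 - C_{12}^2 = (C_{11}-C_{12})(C_{11}+C_{12}) > 0$, so $\Delta$ is a strictly decreasing function of $b^2$ with $\Delta(0) > 0$. Hence there is a unique
\[
b_0 = b_0(a) := \frac{a\,|C_{12}|}{\sqrt{C_{11}^2 - C_{12}^2}} > 0
\]
at which $\Delta(b_0) = 0$, and $\Delta(b) > 0$ for $b < b_0$ while $\Delta(b) < 0$ for $b > b_0$.

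From here I would split into the three regimes. For $b < b_0$, $\Delta(b) > 0$ gives two distinct real eigenvalues $\lambda_1 < \lambda_2$, both positive (since $aC_{11} > \sqrt{\Delta(b)}$ follows from $C_{11}^2 > C_{12}^2$), so the square roots $\omega_i^{(0)} = \sqrt{\lambda_i/|D_1|}$ are real and ordered $\omega_1^{(0)} < \omega_2^{(0)}$. For $b > b_0$, $\Delta(b) < 0$ and the two eigenvalues are complex conjugates of one another. Since $\lambda_1 = \overline{\lambda_2}$ and we choose the branch of the square root with positive real part, the resulting frequencies also satisfy $\omega_1^{(0)} = \overline{\omega_2^{(0)}}$, confirming the conjugate-symmetric spectrum that is characteristic of $\mathcal{PT}$-symmetric operators below the symmetry-breaking threshold.

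At the critical value $b = b_0$ itself we have $\lambda_1 = \lambda_2 = aC_{11}$, so $\omega_1^{(0)} = \omega_2^{(0)}$ follows immediately. The substantive point — and the part I expect to be most delicate — is the eigenvector coincidence. Writing out
\[
C^v - aC_{11}\,I = \begin{pmatrix} \iu b_0 C_{11} & (a+\iu b_0) C_{12} \\ (a-\iu b_0) C_{12} & -\iu b_0 C_{11} \end{pmatrix},
\]
one checks that its determinant is $-b_0^2 C_{11}^2 + (a^2+b_0^2)C_{12}^2 = \Delta(b_0) = 0$, while the matrix itself is nonzero since $C_{11} \neq 0$. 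Hence it has rank one and a one-dimensional kernel, so $C^v$ is defective at $b_0$: both algebraic eigenvectors $\v_1,\v_2$ collapse to a common scalar multiple, yielding $u_1^{(0)} = Ku_2^{(0)}$ with $K \in \mathbb{C}\setminus\{0\}$ determined (up to normalization) by the ratio of components in the kernel. This is precisely the definition of an (asymptotic) exceptional point, as opposed to a mere degeneracy of eigenvalues.

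The main obstacle is essentially conceptual rather than computational: one must verify that the coalescence at $b_0$ is genuinely an exceptional point, i.e.\ that the matrix fails to be diagonalizable there, which is what distinguishes the $\mathcal{PT}$-symmetric setting from a generic crossing. Everything else reduces to the sign analysis of $\Delta(b)$ and the positivity properties of the capacitance coefficients.
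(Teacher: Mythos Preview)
Your proof is correct and follows the same approach as the paper, which likewise identifies $b_0 = a|C_{12}|/\sqrt{C_{11}^2 - C_{12}^2}$ as the point where $C^v$ has a double eigenvalue with one-dimensional eigenspace and declares the remaining checks ``straightforward.'' You supply considerably more detail than the paper does; the only slip is a sign in the determinant computation (it equals $-\Delta(b_0)$ rather than $\Delta(b_0)$), which is immaterial since both vanish.
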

	\begin{proof}
		Combining \Cref{thm:res} and \eqref{eq:eval}, we find that ${b}_0$ is given by
		$${b}_0 = \frac{{a} C_{12}}{\sqrt{C_{11}^2 - C_{12}^2}},$$
		which corresponds to the point where $C^v$ has a double eigenvalue corresponding to a one-dimensional eigenspace. The remaining statements are straightforward to check.
	\end{proof}
\begin{remark}
	\Cref{thm:exceptional} states that the exceptional point occurs only at leading order in $\delta$. This is not due to a limitation of the method and we do not, in fact, expect the system to exhibit an exact exceptional point. This is a consequence of the radiation condition, which means the differential operator corresponding to the problem \eqref{eq:scattering} is not $\mathcal{PT}$-symmetric (even in the case $b=0$ the resonant frequencies have small but non-zero imaginary parts \cite{ammari2017double}). However, the discrete approximation given by the weighted capacitance matrix is indeed $\mathcal{PT}$-symmetric. The approximate nature of the exceptional point can be observed from the simulations presented in \Cref{fig:exceptional}. 
\end{remark}
	\begin{lemma}
		If ${b} \neq {b}_0$, the eigenmodes $u_i$ corresponding to the resonant frequencies $\omega_i$, for $i=1,2$, are given by 
		$$u_i = \v_i^{1}S_1^\omega + \v_i^{2}S_2^\omega  + O(\delta^{1/2}),$$
		as $\delta \to 0$, where $\v_i = \begin{pmatrix}\v_i^{1} & \v_i^{2}\end{pmatrix}^\mathrm{T}$ (using superscript $ \mathrm{T}$ to denote the matrix transpose) are the eigenvectors of $C^v$ given by
		$$\v_i = \begin{pmatrix} -C_{12} \\ C_{11} - \mu_i \end{pmatrix}, \qquad \mu_i = \frac{\lambda_i}{({a}+i{b})}.$$
	\end{lemma}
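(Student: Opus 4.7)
The plan is to specialize Lemma \ref{lem:modal} to the homogeneous scattering problem (taking $u^{\mathrm{in}} = 0$) evaluated at a subwavelength resonant frequency $\omega_i$, and then read off the eigenvector structure using the non-degeneracy assumption $b \neq b_0$. Specifically, Lemma \ref{lem:modal} gives $u_i = q_1 S_1^\omega + q_2 S_2^\omega + O(\omega)$, where $(q_1,q_2)^{\mathrm{T}}$ solves $(C^v - \omega_i^2 |D_1| I)(q_1,q_2)^{\mathrm{T}} = O(\delta\omega + \omega^3)$. Under the subwavelength scaling $\omega = O(\delta^{1/2})$, the residual on the right-hand side is $O(\delta^{3/2})$ and the error in the modal representation collapses to $O(\delta^{1/2})$.

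Next, because $b \neq b_0$, the two eigenvalues of $C^v$ are simple, and \eqref{eq:eval} together with Theorem \ref{thm:exceptional} yields $\lambda_j - \lambda_i = O(\delta)$ and non-zero. Hence $\{\v_1,\v_2\}$ is a basis of $\mathbb{C}^2$. Writing $(q_1,q_2)^{\mathrm{T}} = c_i \v_i + c_j \v_j$ with $\{i,j\} = \{1,2\}$ and using $\omega_i^2 |D_1| = \lambda_i + O(\delta^{3/2})$ from Theorem \ref{thm:res}, the matrix equation reduces to $c_j(\lambda_j - \lambda_i)\v_j = O(\delta^{3/2})$, which forces $c_j = O(\delta^{1/2})$. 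Normalizing $c_i = 1$, we obtain $(q_1,q_2)^{\mathrm{T}} = \v_i + O(\delta^{1/2})$, which combined with the representation above gives the claimed formula.

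The explicit form of the eigenvectors is then pure linear algebra from \eqref{eq:cap_matrix}: the first row of $(C^v - \lambda_i I)\v_i = 0$ reads $(a+ib)(C_{11}\v_i^1 + C_{12}\v_i^2) = \lambda_i \v_i^1$, so $\v_i^2 = (\mu_i - C_{11})/C_{12} \cdot \v_i^1$ with $\mu_i := \lambda_i/(a+ib)$; choosing $\v_i^1 = -C_{12}$ yields the stated expression. The main obstacle is the careful bookkeeping of orders in $\delta$: the residual of the matrix equation is only $O(\delta^{3/2})$ while the spectral gap $\lambda_j - \lambda_i$ is of order $\delta$, so the non-degenerate perturbation argument returns an $O(\delta^{1/2})$ (rather than $O(\delta)$) correction to the eigenvector, which is precisely the loss of a half-power of $\delta$ appearing in the stated error. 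Everything else reduces to algebraic manipulation of a $2\times 2$ matrix.
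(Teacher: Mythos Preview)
Your proposal is correct and follows essentially the same approach as the paper. The paper does not give a separate proof of this lemma; it simply records, in the paragraph preceding \eqref{eq:eval}, that the modal expansion $u_i = \v_i^1 S_1^\omega + \v_i^2 S_2^\omega + O(\delta^{1/2})$ follows from \Cref{thm:res} together with \eqref{eq:psi_basis} and \eqref{eq:layer_potential_representation}, and then states the explicit eigenvector form without further comment. Your write-up is a careful unpacking of exactly this: specializing \Cref{lem:modal} to $u^{\mathrm{in}}=0$, tracking the $O(\delta^{3/2})$ residual against the $O(\delta)$ spectral gap (available when $b\neq b_0$) to control the off-eigenvector component of $(q_1,q_2)^{\mathrm{T}}$, and then reading off $\v_i$ from the first row of $C^v-\lambda_i I$.
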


	\begin{figure}
		\begin{center}
			\includegraphics[width=0.7\linewidth]{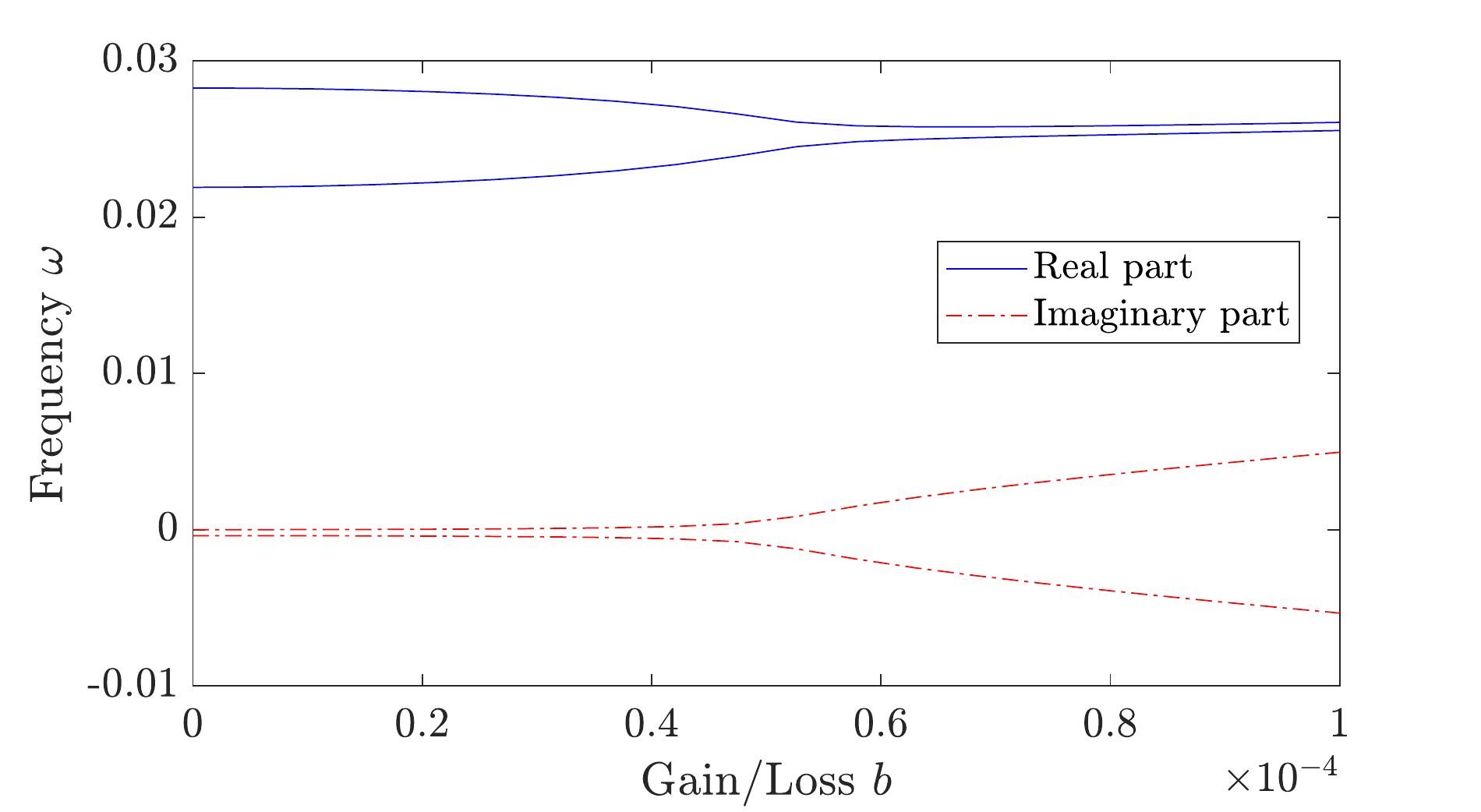}
		\end{center}
		\caption{Plot of the real part (blue) and imaginary part (red) of the resonant frequencies of the dimer as the gain/loss parameter ${b}$ increases. The asymptotic exceptional point occurs at $b_0 \approx 0.5 \times 10^{-4}$, at which point the frequencies coincide to leading order. For $b$ smaller than $b_0$, the frequencies are real, while for $b$ larger than $b_0$ the frequencies are conjugate to each other, again to leading order. Here, we simulate spherical resonators with unit radius, separation distance 2 and material parameters $a = 2\times 10^{-4}$ and $v=1$.} \label{fig:exceptional}
	\end{figure}
	
	\subsection{Numerical computations}
	
	\Cref{fig:exceptional} shows the two resonant frequencies $\omega_1$ and $\omega_2$ as functions of $b$. For $b=b_0$, the resonant frequencies coincide at leading order in $\delta$. The leading-order terms are real for $b<b_0$ and complex conjugates with zero real part for larger $b$. These numerical simulations were performed on spherical resonators using the multipole expansion method, which is outlined in \cite[Appendix~A]{ammari2019topological}.

	\section[PT-symmetric metascreens]{$\mathcal{PT}$-symmetric metascreens} \label{sec:unidir}
	Here, we study a metascreen consisting of periodically repeated $\mathcal{PT}$-symmetric dimers. There are multiple goals. First, we will derive results analogous to those in \Cref{sec:exceptional}, which characterize the band structure and exceptional points of the metascreen. Thereafter, we will solve the plane-wave scattering problem for the metascreen. Using this, we will prove that the metascreen exhibits asymptotic unidirectional reflectionless transmission. In other words, there are frequencies at which an incoming wave from one side will have zero reflection at leading order, while an incoming wave from the opposite side has non-zero reflection. Moreover, we will demonstrate that at a specific magnitude of the gain/loss, the peak transmittance will be extraordinarily large.
	
	\subsection{Scattering problem for the metascreen} \label{sec:scatter}
	\begin{figure}
		\centering
		\includegraphics[width=0.7\linewidth]{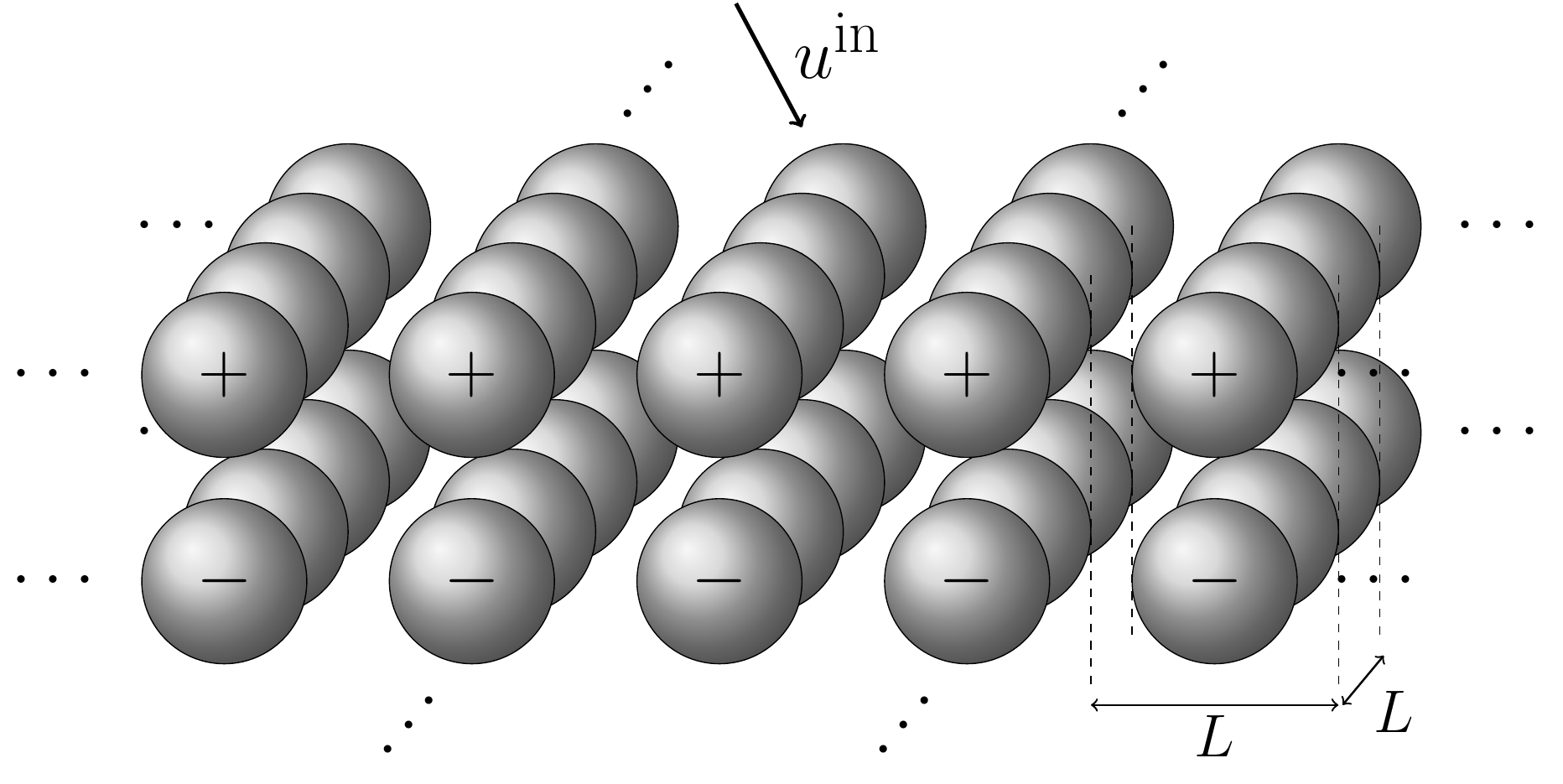}
		\caption{A sketch of a $\mathcal{PT}$-symmetric metascreen with an incident plane wave $u^{\mathrm{in}}$. Here, $+$ and $-$ denote opposite signs of the imaginary part of the material coefficients.} \label{fig:metascreen}
	\end{figure}
	We consider a structure composed of $\mathcal{PT}$-symmetric dimers in a two dimensional square lattice with period $L > 0$. The lattice is given by $\Lambda := L\Z^2$ and we assume that the structure is periodic with unit cell $Y=[-L/2,L/2]\times[-L/2,L/2]\times \R$. We adopt the notation from \Cref{sec:exceptional} where $D$ is a pair of resonators $D_i$ with material parameters $v_i^2\delta_i$, for $i=1,2$:
	$$D = D_1\cup D_2, \quad v_1^2\delta_1 = a+\iu b, \quad v_2^2\delta_2 = a-\iu b.$$
	Additionally, we now assume that $D$ is contained inside $Y$.
	We define the periodically repeated resonators as
	$$\C_i = \bigcup_{(m_1,m_2)\in \Lambda} D_i + (m_1,m_2,0), \ i =1,2, \qquad \C = \C_1 \cup \C_2.$$
	The metascreen is sketched in \Cref{fig:metascreen}.

	The dual lattice $\Lambda^*$ of $\Lambda$ is defined as $\Lambda^* = (2\pi/L)\Lambda$.	The torus $Y^*:=\R^2/\Lambda^*$ is known as the \emph{Brillouin zone}. A function $f(y)$, $y \in \R^2$, is said to be $\alpha$-quasiperiodic, with \emph{quasiperiodicity} $\alpha\in Y^*$, if $e^{-\iu \alpha \cdot y}f(y)$ is periodic as a function of $y$. 
	
	We study the scattering problem 
	\begin{equation} \label{eq:scattering_quasi}
	\left\{
	\begin{array} {ll}
	\ds \Delta {u}+ k^2 {u}  = 0 & \text{in } \R^3 \setminus \C, \\[0.3em]
	\ds \Delta {u}+ k_i^2 {u}  = 0 & \text{in } \C_i, \ i=1,2, \\
	\nm
	\ds  {u}|_{+} -{u}|_{-}  = 0  & \text{on } \partial \C, \\
	\nm
	\ds  \delta_i \frac{\partial {u}}{\partial \nu} \bigg|_{+} - \frac{\partial {u}}{\partial \nu} \bigg|_{-} = 0 & \text{on } \partial \C_i, \ i=1,2, \\
	\nm
	\ds u(x) - u^{\mathrm{in}}(x) & \text{satisfies the outgoing $\alpha$-quasiperiodic} \\ & \text{radiation condition as }  |x_3| \rightarrow \infty.
	\end{array}
	\right.
	\end{equation}
	Here, $u^{\mathrm{in}}$ is the incident field and the outgoing quasiperiodic radiation condition loosely states that $u(x) - u^{\mathrm{in}}(x)$ behaves as a superposition of outgoing plane waves sufficiently far away from the metascreen. {More specifically}, for $|x_3| > M$ for some $M > 0$, $u-u^{\mathrm{in}}$ a Rayleigh expansion given by \cite{bao, botten2013electromagnetic}
	$$u(x)-u^{\mathrm{in}}(x) = \begin{cases} \ds
		\sum_{q\in \Lambda^*} A_{q,+}e^{\iu \k_{q,+}\cdot x}, & x_3 > M, \\[1em]
		\ds \sum_{q\in \Lambda^*} A_{q,-}e^{\iu \k_{q,-}\cdot x}, & x_3 < M,\end{cases}$$
	for some coefficients $A_{q,+}$, $A_{q,-}$, where, for $q=(q_1,q_2)$ and $\alpha = (\alpha_1,\alpha_2)$,
	$$\k_{q,+} = \begin{pmatrix}\alpha_1 + q_1 \\ \alpha_2+q_2 \\ \sqrt{k^2 - |\alpha+q|^2}
	\end{pmatrix}, \qquad \k_{q,-} = \begin{pmatrix}\alpha_1 + q_1 \\ \alpha_2+q_2 \\ -\sqrt{k^2 - |\alpha+q|^2}
	\end{pmatrix}.$$
	Here, the square-root is chosen with positive imaginary part. We seek solutions $u$ which are $\alpha$-quasiperiodic in $(x_1,x_2)$ for some $\alpha$, \ie{}
	$$u(x + (m_1,m_2,0)) = e^{\iu\alpha\cdot (m_1,m_2)}u(x), \quad (m_1,m_2)\in \Lambda.$$
	If $u^{\mathrm{in}} = 0$, these $\alpha$-quasiperiodic solutions are the Bloch modes of the metascreen, while if $u^{\mathrm{in}}$ is a plane wave, we will seek solutions at $\alpha$ specified by the wave vector of $u^{\mathrm{in}}$. 
	
	We will study the scattering problem \eqref{eq:scattering_quasi} using a layer potential formulation analogously as in \Cref{sec:exceptional}.  For $\alpha\in Y^*$, the quasiperiodic Green's function $G^{\alpha,k}(x)$ is defined as the solution to
	$$\Delta G^{\alpha,k}(x) + k^2G^{\alpha,k}(x) = \sum_{(m_1,m_2) \in \Lambda} \delta(x-(m_1,m_2,0))e^{\iu \alpha\cdot (m_1,m_2)},$$
	along with the outgoing quasiperiodic radiation condition,  
	where $\delta(x)$ denotes the Dirac delta distribution. In this work, we will mostly focus on the case $k \neq |\alpha+q|$ for all $q  \in \Lambda^*$, in which $G^{\alpha,k}$ can be written as
	\begin{equation}\label{eq:Gquasi}
	G^{\alpha,k}(x,y) := -\sum_{(m_1,m_2) \in \Lambda} \frac{e^{\iu k|x- (m_1,m_2,0)|}}{4\pi|x-(m_1,m_2,0)|}e^{\iu \alpha \cdot (m_1,m_2)},
	\end{equation}
	where the series in the spatial representation \eqref{eq:Gquasi} converges uniformly for $x$ in compact sets of $\R^3$, $x\neq 0$ (see \emph{e.g} \cite[Section 2.12]{ammari2018mathematical}, or \cite{petit2013electromagnetic} for a more general review). Then, again assuming that $k \neq |\alpha+q|$ for all $q  \in \Lambda^*$, we define the quasiperiodic single layer potential $\mathcal{S}_D^{\alpha,k}$ by
	$$\mathcal{S}_D^{\alpha,k}[\phi](x) := \int_{\partial D} G^{\alpha,k} (x-y) \phi(y) \dx\sigma(y),\quad x\in \mathbb{R}^3.$$
	On the boundary of $D$, it satisfies the jump relations
	\begin{equation} \label{eq:jump1_quasi}
	\S_D^{\alpha,k}[\phi]\big|_+ = \S_D^{\alpha,k}[\phi]\big|_- \quad \mbox{on}~ \p D,
	\end{equation}
	and
	\begin{equation} \label{eq:jump2_quasi}
	\frac{\p}{\p\nu} \mathcal{S}_D^{\alpha,k}[\phi] \Big|_{\pm} = \left( \pm \frac{1}{2} I +( \mathcal{K}_D^{-\alpha,k} )^*\right)[\phi]\quad \mbox{on}~ \p D,
	\end{equation}
	where $(\mathcal{K}_D^{-\alpha,k})^*$ is the quasiperiodic Neumann-Poincar\'e operator, given by
	$$ (\mathcal{K}_D^{-\alpha, k} )^*[\phi](x):= \int_{\p D} \frac{\p}{\p\nu_x} G^{\alpha,k}(x-y) \phi(y) \dx\sigma(y).$$
	\begin{lemma} \label{lem:inv}
		The quasiperiodic single layer potential
		$\mathcal{S}_D^{\alpha,k} : L^2(\p D) \rightarrow H^1(\p D)$ is invertible if $k$ is small enough and $k \neq  |\alpha + q|$ for all $q \in \Lambda^*$.
	\end{lemma}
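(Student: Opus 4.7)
The plan is to prove invertibility via a Fredholm argument combined with uniqueness for the exterior quasiperiodic Helmholtz problem. The first step is to split off the lattice singularity at the origin by writing $G^{\alpha,k}(x) = G^k(x) + R^{\alpha,k}(x)$, where $R^{\alpha,k}$ collects the terms in \eqref{eq:Gquasi} with $(m_1,m_2) \neq (0,0)$. Provided $D$ lies strictly inside the unit cell $Y$, the kernel $R^{\alpha,k}(x-y)$ is smooth for $x,y \in \partial D$, and therefore
$$\mathcal{S}_D^{\alpha,k} = \mathcal{S}_D^k + \mathcal{R}_D^{\alpha,k},$$
where $\mathcal{R}_D^{\alpha,k} \colon L^2(\partial D) \to H^1(\partial D)$ is smoothing, and hence compact.

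By \eqref{eq:exp_S} and the invertibility of $\mathcal{S}_D$, a Neumann series argument shows that $\mathcal{S}_D^k$ is invertible for $k$ small enough. Consequently, $\mathcal{S}_D^{\alpha,k}$ is a compact perturbation of an invertible operator, so it is Fredholm of index zero, and it suffices to prove injectivity. Suppose $\phi \in L^2(\partial D)$ satisfies $\mathcal{S}_D^{\alpha,k}[\phi] = 0$ on $\partial D$, and set $u := \mathcal{S}_D^{\alpha,k}[\phi]$ on all of $Y$. Then $u$ is $\alpha$-quasiperiodic, satisfies $(\Delta + k^2)u = 0$ in $Y \setminus \partial D$, obeys the outgoing quasiperiodic radiation condition, and has vanishing Dirichlet trace on $\partial D$. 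For $k$ small enough, $k^2$ lies below the first Dirichlet eigenvalue of $-\Delta$ on $D$, so interior uniqueness gives $u \equiv 0$ in $D$.

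For the exterior, I would apply Green's identity to $u$ on $B_M \setminus \overline{D}$ with $B_M := Y \cap \{|x_3| < M\}$ containing $D$: the contribution on $\partial D$ vanishes, lateral contributions cancel by $\alpha$-quasiperiodicity, and the contributions on $\{x_3 = \pm M\}$ are evaluated via the Rayleigh expansion whose modes are mutually orthogonal over a fundamental domain. Since $k \neq |\alpha + q|$ for every $q \in \Lambda^*$, each Rayleigh mode is strictly propagating or strictly evanescent, and the imaginary part of the flux identity forces the amplitudes of the propagating modes to vanish. A standard argument (for instance Holmgren's theorem applied after observing that $u$ and $\partial_\nu u$ must both vanish across an analytic slice, or the non-existence of a non-trivial outgoing purely evanescent solution to the exterior Dirichlet problem) then eliminates the evanescent amplitudes as well. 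Hence $u \equiv 0$ in the exterior, and the jump relation \eqref{eq:jump2_quasi} yields $\phi = \partial u/\partial \nu|_+ - \partial u/\partial \nu|_- = 0$, establishing injectivity and thus invertibility. The main obstacle is precisely this exterior uniqueness step, which rests on the absence of Rayleigh anomalies guaranteed by the hypothesis $k \neq |\alpha + q|$; at such anomalies the flux identity degenerates because a mode becomes neither propagating nor evanescent and uniqueness genuinely fails.
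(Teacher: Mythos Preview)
Your argument is correct and shares its core with the paper's proof: both show injectivity by setting $u = \S_D^{\alpha,k}[\phi]$, using Dirichlet uniqueness on $D$ and on $Y\setminus \overline{D}$ for small $k$ to force $u\equiv 0$, and then recovering $\phi = 0$ from the jump in the normal derivative. The paper's version is much terser: it does not spell out the Fredholm step at all (relying implicitly on standard layer-potential theory for the index-zero property), and for the exterior it simply asserts that, for small $k$, $k^2$ is not a Dirichlet eigenvalue of $-\Delta$ on $Y\setminus D$ with quasiperiodic conditions on $\partial Y$, without distinguishing propagating from evanescent Rayleigh modes. Your decomposition $\S_D^{\alpha,k} = \S_D^k + \mathcal{R}_D^{\alpha,k}$ with $\mathcal{R}_D^{\alpha,k}$ compact, together with the flux argument that first eliminates the propagating amplitudes before invoking small-$k$ uniqueness for the decaying remainder, makes the proof self-contained and also clarifies exactly where the hypothesis $k \neq |\alpha+q|$ enters; the price is a longer write-up for what the paper dispatches in three lines.
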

	\begin{proof}
		If $\varphi \in L^2(\p D)$ satisfies $\S_D^{\alpha,k}[\varphi] = 0$ on $\p D$, then $u := \S_D^{\alpha,k}[\varphi]$ satisfies $\Delta u + k^2u = 0$ in $Y\setminus \p D$. Since $0$ is not a Dirichlet eigenvalue of $-\Delta$ in $D$, and neither a Dirichlet eigenvalue of $-\Delta$ on $Y\setminus D$ with quasiperiodic conditions on $\p Y$, it follows that $u = 0$ for small enough $k$. Then, from the jump condition \eqref{eq:jump1_quasi} we have $\varphi = \p u/ \p \nu|_+ - \p u/ \p \nu|_- = 0$, which proves the claim.
	\end{proof}
	\begin{remark} \label{rmk:2D}
		Throughout \Cref{sec:unidir}, we study the problem in three spatial dimensions. However, all the arguments carry over to the case of two spatial dimensions with a one-dimensional screen of resonators, producing similar results. Indeed, the numerical simulations used to create \Cref{fig:bandexceptional,fig:scattering} are performed on an arrays of circular resonators that are the two-dimensional analogues of those which are analysed here.
	\end{remark}
	
	\subsection{Band structure and exceptional points}\label{sec:band}
	In this section, we study the resonance problem, or in other words, the problem \eqref{eq:scattering_quasi} with $u^{\mathrm{in}} = 0$. Moreover, we study the regime when $\omega\to 0$ while $|\alpha| > c > 0$ for some $c$ independent on $\omega$. By modifying the arguments used to derive (A.6) and (A.7) of \cite{ammari2017subwavelength}, we can obtain asymptotic expansions for a three-dimensional structure with two-dimensional periodicity. In particular, we have that as $k\to0$
	\begin{equation}\label{eq:exp_S_quasi}
	\S_D^{\alpha,k} =\S_D^{\alpha,0} + O(k^2),
	\end{equation}
	and
	\begin{equation}\label{eq:exp_K_quasi}
	(\mathcal{K}_D^{-\alpha, k} )^* = (\mathcal{K}_D^{-\alpha, 0} )^* + O(k^2).
	\end{equation}
	Here, the error terms are with respect to the operator norms, and are uniform for $|\alpha| > c > 0$. As in \Cref{sec:exceptional}, we define the quasiperiodic capacitance coefficients $C_{ij}^\alpha$, for $i,j=1,2$, as
	\begin{equation}
	C_{ij}^\alpha=-\int_{\D_i} \psi_j^\alpha\de\sigma, \qquad \psi_j^\alpha=(\S_D^{\alpha,0})^{-1}[\Chi_{\p D_j}].
	\end{equation}
	The \emph{quasiperiodic capacitance matrix} $C^\alpha$ is defined as the matrix $C^\alpha = (C_{ij}^\alpha)$, while the \emph{weighted} quasiperiodic capacitance matrix $C^{v,\alpha}$ is defined as
	\begin{equation*}
	C^{v,\alpha}=VC^{\alpha},
	\end{equation*}
	with $V$ as in \eqref{eq:cap_matrix}. As we shall see, the capacitance matrix gives the leading-order approximation of the solution to the resonance problem \eqref{eq:scattering_quasi}. We have the following lemma from \cite[Lemma 3.1]{ammari2019topological}.
	\begin{lemma}
		We have
		$$C_{11}^\alpha = C_{22}^\alpha \in \R, \qquad C_{12}^\alpha = \overline{C_{21}^\alpha}.$$
	\end{lemma}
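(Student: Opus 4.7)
The plan is to exploit two symmetries of the $k=0$ quasiperiodic single layer potential $\S_D^{\alpha,0}$: a Hermitian structure on $L^2(\p D)$ and the reflection symmetry $D_1 = \P D_2$. Both rest on the elementary identities
$$G^{\alpha,0}(-x) = G^{-\alpha,0}(x), \qquad \overline{G^{\alpha,0}(x)} = G^{-\alpha,0}(x),$$
which follow from the lattice sum representation \eqref{eq:Gquasi} (taken at $k=0$) by re-indexing $m\mapsto -m$. Composing them gives $\overline{G^{\alpha,0}(x-y)} = G^{\alpha,0}(y-x)$, i.e., the $k=0$ quasiperiodic Green's function has a Hermitian kernel, along with the auxiliary consequence $\overline{\psi_j^\alpha} = \psi_j^{-\alpha}$, so that $\overline{C_{ij}^\alpha} = C_{ij}^{-\alpha}$.

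From the Hermitian kernel it follows that $\S_D^{\alpha,0}$ is self-adjoint on $L^2(\p D)$. The first step is therefore to test this self-adjointness against the densities $\psi_i^\alpha$ and $\psi_j^\alpha$ that define the capacitance coefficients. Using $\S_D^{\alpha,0}[\psi_i^\alpha] = \Chi_{\p D_i}$, self-adjointness yields
$$-C_{ij}^\alpha = \int_{\p D_i} \psi_j^\alpha \,\de\sigma = \int_{\p D}\overline{\S_D^{\alpha,0}[\psi_i^\alpha]}\,\psi_j^\alpha\,\de\sigma = \int_{\p D}\overline{\psi_i^\alpha}\,\Chi_{\p D_j}\,\de\sigma = -\overline{C_{ji}^\alpha},$$
so $C^\alpha$ is Hermitian. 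This already supplies the identity $C_{12}^\alpha = \overline{C_{21}^\alpha}$ together with the fact that both $C_{11}^\alpha$ and $C_{22}^\alpha$ are real.

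To obtain $C_{11}^\alpha = C_{22}^\alpha$, I would translate the geometric symmetry $D_1 = \P D_2$ into a symmetry of the densities. Starting from $\S_D^{\alpha,0}[\psi_j^\alpha](x) = \Chi_{\p D_j}(x)$ for $x\in\p D$ and substituting $x\mapsto -x$, $y\mapsto -y$ inside the boundary integral, the identity $G^{\alpha,0}(-x+y) = G^{-\alpha,0}(x-y)$ combined with $\Chi_{\p D_j}(-x) = \Chi_{\p D_{3-j}}(x)$ yields $\S_D^{-\alpha,0}[\psi_j^\alpha(-\,\cdot\,)] = \Chi_{\p D_{3-j}}$ on $\p D$. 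By the invertibility in \Cref{lem:inv} this forces $\psi_j^\alpha(-x) = \psi_{3-j}^{-\alpha}(x)$, and changing variables in the defining integral for $C_{ij}^\alpha$ then produces $C_{11}^\alpha = C_{22}^{-\alpha}$. Since $C_{22}^{-\alpha} = \overline{C_{22}^\alpha} = C_{22}^\alpha$ (using $\overline{C_{ij}^\alpha} = C_{ij}^{-\alpha}$ and the reality established in the previous paragraph), the desired identity follows. The only delicate point is verifying $\psi_j^\alpha(-x) = \psi_{3-j}^{-\alpha}(x)$, as the change of variables must be tracked carefully through the lattice sum and the quasiperiodicity of the densities; the remaining manipulations are purely algebraic.
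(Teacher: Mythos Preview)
Your argument is correct. The paper does not actually prove this lemma here; it simply cites \cite[Lemma~3.1]{ammari2019topological}. Your proof supplies the details along the natural lines: the Hermitian symmetry of the $k=0$ quasiperiodic kernel gives $C_{ij}^\alpha = \overline{C_{ji}^\alpha}$, and the geometric parity $D_1=\P D_2$ combined with $\overline{C_{ij}^\alpha}=C_{ij}^{-\alpha}$ closes the loop $C_{11}^\alpha = C_{22}^{-\alpha} = \overline{C_{22}^\alpha} = C_{22}^\alpha$. Two minor remarks: your appeal to \Cref{lem:inv} at $k=0$ is legitimate precisely because Section~\ref{sec:band} works under the standing assumption $|\alpha|>c>0$, so $0\neq|\alpha+q|$ for all $q\in\Lambda^*$; and the change of variables $y\mapsto -y$ on $\p D$ that you flag as delicate is indeed harmless, since $\P D = D$ and the surface Jacobian is $1$.
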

	Directly following the arguments of \Cref{sec:exceptional}, but instead using the jump conditions \eqref{eq:jump1_quasi}, \eqref{eq:jump2_quasi} and the asymptotic expansions \eqref{eq:exp_S_quasi}, \eqref{eq:exp_K_quasi}, we can show the following theorem on the band structure of the metascreen, which is the analogue of \Cref{thm:res}.
	\begin{thm} \label{thm:res_quasi}
		As $\delta \rightarrow 0$, the quasiperiodic resonant frequencies satisfy the asymptotic formula
		$$\omega_i^\alpha = \sqrt{\frac{\lambda_i^\alpha}{|D_1|}} + O(\delta^{3/2}), \quad i = 1,2,$$
		where $|D_1|$ is the volume of a single resonator. Here, $\lambda_i^\alpha$ are the eigenvalues of the weighted quasiperiodic capacitance matrix $C^{v,\alpha}$.
	\end{thm}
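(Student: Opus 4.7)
The plan is to adapt the strategy used to prove Theorem 2.1 (via Lemma 2.2), replacing the free-space layer potentials $\S_D^k$ and $\K_D^{k,*}$ by their quasiperiodic counterparts $\S_D^{\alpha,k}$ and $(\K_D^{-\alpha,k})^*$. First, I would represent the solution of the resonance problem (that is, \eqref{eq:scattering_quasi} with $u^{\mathrm{in}}=0$) as
\[
u(x) = \begin{cases} \S_D^{\alpha,k_i}[\phi](x), & x\in \C_i,\\ \S_D^{\alpha,k}[\psi](x), & x\in Y\setminus \closure{D},\end{cases}
\]
extended quasiperiodically, for densities $\phi,\psi\in L^2(\p D)$. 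The transmission conditions across $\p D$, combined with the jump formulas \eqref{eq:jump1_quasi}--\eqref{eq:jump2_quasi}, yield an integral system for $(\phi,\psi)$ exactly analogous to the one in the proof of Lemma 2.2.

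Using the scaling $\omega=O(\delta^{1/2})$ and substituting the expansions \eqref{eq:exp_S_quasi}--\eqref{eq:exp_K_quasi}, the first transmission equation reduces to $\S_D^{\alpha,0}[\phi-\psi]=O(\omega^2)$, and invertibility of $\S_D^{\alpha,0}$ (Lemma 3.2) gives $\psi=\phi+O(\omega^2)$. At leading order, the second transmission equation becomes $(-\tfrac{1}{2}I+(\K_D^{-\alpha,0})^*)[\phi]=0$. One must then verify that $\psi_1^\alpha,\psi_2^\alpha$ span the kernel of this operator: since $\S_D^{\alpha,0}[\psi_j^\alpha]=\Chi_{\p D_j}$ is locally constant on $\p D$ and harmonic in $D$, it equals the constant $\delta_{ij}$ inside $D_i$, so its interior normal derivative vanishes on $\p D$, placing $\psi_j^\alpha$ in $\ker(-\tfrac{1}{2}I+(\K_D^{-\alpha,0})^*)$. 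This kernel has dimension two because the zero Neumann eigenspace of $-\Delta$ in $D$ is spanned by the indicator functions of the two components. Hence $\phi=q_1\psi_1^\alpha+q_2\psi_2^\alpha+\text{(lower order)}$.

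Integrating the second transmission equation over $\p D_i$ and invoking the quasiperiodic analogues of the identities in Lemma 2.1 — which follow by the same divergence-theorem computations as in the free-space case, with boundary contributions on $\p Y$ cancelling by quasiperiodicity — extracts the $2\times 2$ matrix eigenvalue problem
\[
\bigl(C^{v,\alpha}-\omega^2|D_1|I\bigr)\begin{pmatrix}q_1\\ q_2\end{pmatrix}=O(\delta^2).
\]
The improved error exponent, compared with the $O(\delta^{3/2})$ appearing in Lemma 2.2, reflects the fact that the expansions \eqref{eq:exp_S_quasi}--\eqref{eq:exp_K_quasi} contain no $O(k)$ term (unlike their free-space counterparts \eqref{eq:exp_S}--\eqref{eq:exp_K}). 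Non-trivial solutions exist precisely when $\omega^2|D_1|$ equals an eigenvalue $\lambda_i^\alpha$ of $C^{v,\alpha}$ up to this error; since $\lambda_i^\alpha=O(\delta)$, taking square roots gives $\omega_i^\alpha=\sqrt{\lambda_i^\alpha/|D_1|}+O(\delta^{3/2})$.

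The main technical obstacle is to make the $O(k^2)$ coefficient in the expansion of $(\K_D^{-\alpha,k})^*$ explicit and to verify that, when integrated over $\p D_i$, it produces the quasiperiodic analogue $-\int_{D_i}\S_D^{\alpha,0}[\phi]\de x$ of the third identity in Lemma 2.1 — this is the step that injects the spectral parameter $\omega^2|D_1|$ into the matrix equation and drives the whole reduction. The remaining identities from Lemma 2.1 transfer without difficulty, provided one exploits $|\alpha|>c>0$ to keep $G^{\alpha,0}$ away from its static resonance at $\alpha=0$ so that all relevant boundary terms on $\p Y$ vanish cleanly. Once these identities are in place, the rest of the argument mirrors Section 2 essentially verbatim.
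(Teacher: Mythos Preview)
Your proposal is correct and follows essentially the same route as the paper, which merely states that Theorem~\ref{thm:res_quasi} is obtained by ``directly following the arguments of Section~\ref{sec:exceptional}'' with the quasiperiodic jump relations \eqref{eq:jump1_quasi}--\eqref{eq:jump2_quasi} and expansions \eqref{eq:exp_S_quasi}--\eqref{eq:exp_K_quasi} in place of their free-space counterparts. You have correctly identified the mechanism behind the sharper $O(\delta^{3/2})$ error (the absence of an $O(k)$ term in \eqref{eq:exp_S_quasi}--\eqref{eq:exp_K_quasi}, cf.\ Remark~\ref{rmk:asymdiff}), and your derivation of the quasiperiodic analogue of the third identity in Lemma~\ref{lem:ints} via the divergence theorem is exactly the right idea. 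One small point worth noting: in the more detailed calculation of Section~\ref{sec:scatter_asymp} the paper represents the interior field with the \emph{free-space} single layer potential $\S_D^{k_i}$ rather than the quasiperiodic $\S_D^{\alpha,k_i}$ you use; both choices are valid representations and lead to the same matrix problem, so this is a matter of taste rather than a gap.
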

	
	\begin{remark} \label{rmk:asymdiff}
		The error in \Cref{thm:res_quasi} is an order of magnitude smaller than in \Cref{thm:res}, which was the equivalent result for a finite system of just two resonators. This is a consequence of the fact that the $O(k)$ term in the expansion (\ref{eq:exp_S_quasi}) of the quasiperiodic single layer potential is zero.
	\end{remark}

	Analogously to the case of a single dimer studied in \Cref{sec:exceptional}, the eigenvalues of the weighted quasiperiodic capacitance matrix are given by
	\begin{equation}\label{eq:eval_quasi}
	\lambda_i^\alpha={a} C_{11}^\alpha \pm \sqrt{{a}^2|C_{12}^\alpha|^2-{b}^2\big((C_{11}^\alpha)^2 - |C_{12}^\alpha|^2\big)},
	\end{equation}
	meaning the asymptotic exceptional point occurs when $b= b_0(\alpha),$ given by
	$${b}_0(\alpha) = \frac{{a} |C_{12}^\alpha|}{\sqrt{(C_{11}^\alpha)^2 - |C_{12}^\alpha|^2}}.$$
	The exceptional point now depends both on the geometry and on $\alpha$, and will therefore correspond to a point in the band structure. This is illustrated in \Cref{fig:bandexceptional}, which shows the band structure of a $\mathcal{PT}$-symmetric metascreen. The computations were performed using the multipole discretization as outlined in \cite[Appendix C]{ammari2017subwavelength}. Close to the origin of the Brillouin zone the system is always below the asymptotic exceptional point. For larger $\alpha$ and for large enough $b$, there will be a point $\alpha_0$ where $b = b_0(\alpha_0)$. For $\alpha$ above this point, the band structure of the system has a non-zero imaginary part and the two bands are complex-conjugated to leading order in $\delta$. {We remark that the error term is visibly much smaller in this case than it was for a system of two resonators} (\Cref{fig:exceptional}){, as discussed in }\Cref{rmk:asymdiff}.
	
	\begin{figure}[tbh] 
		\begin{center}
			\includegraphics[width=0.6\linewidth]{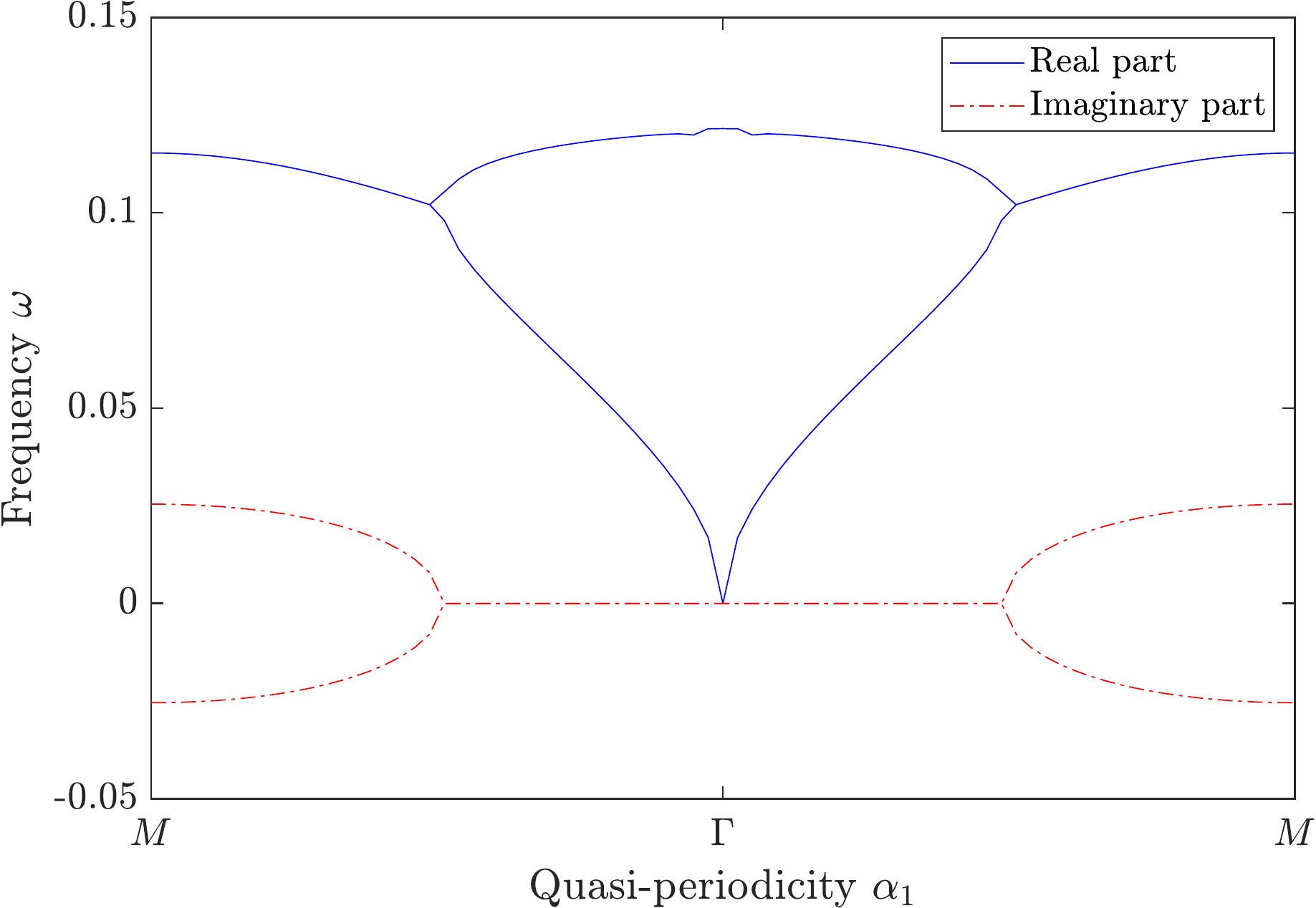}
		\end{center}
		\caption{Plot of the real parts (blue) and imaginary parts (red) of the band structure of the metascreen. The exceptional point is a point $(\alpha,\omega)$ at which the frequencies coincide to leading order. Here, we simulate a two-dimensional problem using circular resonators with period $L=1$, separation distance $0.5L$, radius $0.15L$ and material parameters $a = 2\times 10^{-4}, b = 1\times 10^{-4}$ and $v=1$. {These points correspond to $\delta \approx 2.2\cdot 10^{-4}$. Moreover, $\Gamma$ and $M$ indicate, respectively, the points $\alpha = 0$ and $\alpha = \pi/L$.}} \label{fig:bandexceptional}
	\end{figure}
	
	\subsection{Periodic Green's functions and capacitance matrix} \label{sec:C0}
	We will now study the layer potentials and capacitance coefficients when both $\omega$ and $\alpha$ approach zero. We consider $k\in \R$ and study the regime when $|\alpha| < k < \inf_{q\in \Lambda^*\setminus \{0\} }|\alpha+q|$, which is not encompassed by the analysis in \Cref{sec:band}. In this case, it was shown in \cite{ammari2017screen} that the quasiperiodic Green's function admits the spectral representation
	\begin{equation} \label{eq:specrep} G^{\alpha,k}(x) = \frac{e^{\iu\alpha\cdot(x_1,x_2)}e^{\iu k_3|x_3|}}{2\iu k_3L^2} - \sum_{q\in \Lambda^*\setminus \{0\}}\frac{e^{\iu(\alpha+q)\cdot(x_1,x_2)}e^{-\sqrt{|\alpha+q|^2 - k^2}|x_3|}}{2L^2\sqrt{|\alpha+q|^2 - k^2} },\end{equation}
	where $k_3 = \sqrt{k^2 - |\alpha|^2}$. The series in \eqref{eq:specrep} converges uniformly for $x$ in compact sets of $\R^3$, $x\neq 0$,  and $k \neq |\alpha+q|$ for all $q \in \Lambda^*$ (again, see \emph{e.g.} \cite{ammari2018mathematical}).% In the case $k < \inf_{q\in \Lambda^*}|\alpha+q|$ ("below the sound cone"), we similarly have
	%$$G^{\alpha,k}(x) = - \sum_{q\in \Lambda^*}\frac{e^{\iu(\alpha+q)\cdot(x_1,x_2)}e^{-\sqrt{|\alpha+q|^2 - k^2}|x_3|}}{2L^2\sqrt{|\alpha+q|^2 - k^2} }.	$$

	In the case when $k = \alpha = 0$, we call $G^{0,0}$ the \emph{periodic} Green's function and we have \cite{ammari2017screen}
	\begin{equation} \label{eq:specrep_0}
	G^{0,0}(x) = \frac{|x_3|}{2L^2} - \sum_{q\in \Lambda^*\setminus \{0\}}\frac{e^{\iu q \cdot(x_1,x_2)}e^{-|q||x_3|}}{2L^2|q|}.
	\end{equation}
	In the subsequent sections, we are interested in the case when the incident wave has a fixed direction of incidence and a frequency $\omega$ in the subwavelength regime. We therefore set  $k = \omega, \alpha = \omega \alpha_0$ and $w_3 = \sqrt{1-|\alpha_0|^2}$ for some $\alpha_0$ independent of $\omega$ (here, $\alpha_0$ represents the incident direction). When $\omega \rightarrow 0$, we have
	\begin{equation} \label{eq:Gexp}
	G^{\omega\alpha_0,\omega}(x) = \frac{1}{2\iu \omega w_3 L^2} + G^{0,0}(x) + \frac{\alpha_0 \cdot (x_1,x_2)}{2w_3L^2} + \omega G_1^{\alpha_0} (x) + O(\omega^2).
	\end{equation}
	Here, $G_1^{\alpha_0} $ is a function independent of $\omega$, which can be written \cite{ammari2017screen}
	$$
	G_1^{\alpha_0} (x) = \frac{\iu \left(w_3|x_3| + \alpha_0\cdot (x_1,x_2)\right)^2}{4w_3L^2} + \alpha_0 \cdot g_1(x),$$
	where $g_1(x)$ is a vector-valued function independent of $\alpha$ and $\omega$, satisfying 
	$$g_1(x_1,x_2,x_3) = g_1(x_1,x_2,-x_3), \qquad g_1(x_1,x_2,x_3) = - g_1(-x_1,-x_2,x_3).$$
	From \eqref{eq:Gexp} we in particular observe that the Green's function has a singularity of order $\omega^{-1}$. This fact makes the subsequent analysis qualitatively similar to the case of finite resonator systems in two dimensions, studied for example in \cite{bryn2019cochlea, ammari2018minnaert}. 
	
	We define the operators $\hat{\S}_D^{\alpha,k}: L^2(\p D) \rightarrow H^1(\p D)$ and $(\hat\K_D^{-\alpha,k})^*: L^2(\p D) \rightarrow L^2(\p D)$ as
	\begin{equation}
	\label{eq:Shat}\hat{\S}_D^{\alpha,k}[\varphi](x) = {\S}_D^{0,0}[\varphi](x) - \frac{\iu - \alpha\cdot(x_1,x_2)}{2 k_3 L^2} \int_{\p D}\varphi \dx \sigma - \int_{\p D}\frac{\alpha \cdot (y_1,y_2)}{2k_3L^2} \varphi(y)\dx \sigma (y),
	\end{equation}
	and
	$$(\hat\K_D^{-\alpha,k})^*[\varphi](x) = (\K_D^{0,0})^*[\varphi](x) +  \frac{\alpha \cdot (\nu_{x,1}, \nu_{x,2})}{2k_3L^2}\int_{\p D} \varphi \dx \sigma.$$
	Here, ${\S}_D^{0,0}$ and $(\K_D^{0,0})^*$ denote the \emph{periodic} single-layer potential and Neumann-Poincaré operators associated to $G^{0,0}$, while $\nu_x = (\nu_{x,1}, \nu_{x,2}, \nu_{x,3})$ denotes the outwards pointing normal of $D$ at $x$.
	Moreover, we define the operators $\S_1^{\alpha_0} : L^2(\p D) \rightarrow H^1(\p D)$ and  $(\K_{D,1}^{-\alpha_0})^*: L^2(\p D) \rightarrow L^2(\p D)$ as
	$$\S_1^{\alpha_0}[\phi](x) := \int_{\partial D} G_1^{\alpha_0} (x-y) \phi(y) \dx\sigma(y), \qquad (\K_{D,1}^{-\alpha_0})^*[\phi](x):= \int_{\p D} \frac{\p}{\p\nu_x} G_1^{\alpha_0}(x-y) \phi(y) \dx\sigma(y).$$
	Using \eqref{eq:Gexp} we can prove the following asymptotic expansions (following the arguments used to derive \cite[eq. (3.23)]{ammari2017screen})
	\begin{equation} \label{eq:exp_hat}
	\S_D^{\omega\alpha_0,\omega} = \hat{\S}_D^{\omega\alpha_0,\omega} + \omega\S_1^{\alpha_0} + O(\omega^2), \qquad (\K_D^{-\omega\alpha_0,\omega})^* = (\hat\K_D^{-\omega\alpha_0,\omega})^* +  \omega(\K_{D,1}^{-\alpha_0})^* + O(\omega^2),
	\end{equation}
	as $\omega \rightarrow 0$, where the error terms are with respect to corresponding operator norms. Similarly to \Cref{lem:ints}, we have the following lemma.
	\begin{lemma} \label{lem:intK}
		For any $\varphi\in L^2(\D)$ we have, for $i=1,2$,
	$$\int_{\D_i}\left(-\frac{1}{2}I+(\hat\K_D^{-\alpha,k})^*\right)[\varphi]\de\sigma=0, \qquad \int_{\D_i}(\K_{D,1}^{-\alpha_0})^*[\varphi] = \frac{\iu|D_1|}{2w_3L^2}\int_{\D}\varphi \de\sigma.$$
	\end{lemma}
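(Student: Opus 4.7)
The plan is to prove the two identities by separate divergence-theorem computations, in the spirit of \Cref{lem:ints}.

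For the first identity I would split the integrand according to the two summands in the definition of $(\hat\K_D^{-\alpha,k})^*$. The contribution of the rank-one correction is
\[
\frac{1}{2k_3L^2}\left(\int_{\D}\varphi\de\sigma\right)\alpha\cdot\int_{\D_i}(\nu_{x,1},\nu_{x,2})\de\sigma(x),
\]
and the inner integral vanishes because, by the divergence theorem applied to the constant vector fields $(1,0,0)$ and $(0,1,0)$ on $D_i$, $\int_{\D_i}\nu_{x,j}\de\sigma=0$ for $j=1,2$. For the $(\K_D^{0,0})^*$-part I would observe that $u:=\S_D^{0,0}[\varphi]$ is harmonic in $D_i$: indeed $G^{0,0}(x-y)$ is harmonic in $x$ away from the lattice translates of $y$, and the fact that $D\Subset Y$ ensures that, for $x\in\mathrm{int}(D_i)$ and $y\in\D$, the difference $x-y$ never reaches a non-zero point of $\Lambda$. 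The analogue of the jump relation \eqref{eq:jump2} for $\S_D^{0,0}$, combined with the divergence theorem, then gives
\[
\int_{\D_i}\Big(-\tfrac12 I+(\K_D^{0,0})^*\Big)[\varphi]\de\sigma=\int_{\D_i}\frac{\partial u}{\partial\nu}\bigg|_-\de\sigma=\int_{D_i}\Delta u\de x=0.
\]
Summing the two contributions yields the first identity.

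For the second identity, Fubini and the divergence theorem reduce the claim to computing
\[
\int_{D_i}\Delta_x G_1^{\alpha_0}(x-y)\de x\qquad\text{for }y\in\D.
\]
Rather than differentiating the explicit formula for $G_1^{\alpha_0}$ (which contains the non-smooth factor $|x_3|$), I would substitute the expansion \eqref{eq:Gexp} into the defining equation $(\Delta+\omega^2)G^{\omega\alpha_0,\omega}(x)=\sum_{(m_1,m_2)\in\Lambda}e^{\iu\omega\alpha_0\cdot(m_1,m_2)}\delta(x-(m_1,m_2,0))$, Taylor-expand the exponential in $\omega$, and match coefficients at order $\omega^1$. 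Since the $\omega^{-1}$-coefficient of $G^{\omega\alpha_0,\omega}$ is the constant $(2\iu w_3 L^2)^{-1}$, this yields
\[
\Delta G_1^{\alpha_0}(z)=\frac{\iu}{2w_3L^2}+\iu\sum_{(m_1,m_2)\in\Lambda\setminus\{0\}}\alpha_0\cdot(m_1,m_2)\,\delta(z-(m_1,m_2,0)),
\]
with the $m=0$ term cancelling for free because $\alpha_0\cdot(0,0)=0$. Since $D\Subset Y$, the difference set $D-D$ meets no non-zero point of $\Lambda$, so $\Delta G_1^{\alpha_0}\equiv\iu/(2w_3L^2)$ on $D-D$. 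Consequently $\int_{D_i}\Delta_x G_1^{\alpha_0}(x-y)\de x=\iu|D_i|/(2w_3L^2)$, and the equality $|D_1|=|D_2|$ (a consequence of $D_1=-D_2$) closes the argument.

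The main technical obstacle is the distributional delicacy of $G_1^{\alpha_0}$ near the plane $\{x_3=0\}$: the displayed formula for $G_1^{\alpha_0}$ contains a cross-term proportional to $|x_3|\,\alpha_0\cdot(x_1,x_2)$ whose Laplacian produces a singular measure supported on this plane, which must be cancelled by a compensating distributional contribution hidden inside $\alpha_0\cdot g_1(x)$. Routing the computation through the PDE derived from the asymptotic expansion, rather than through the closed-form expression, sidesteps this subtlety entirely and simultaneously legitimises both the Fubini swap and the divergence-theorem step on $D_i\Subset Y$.
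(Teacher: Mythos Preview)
Your proof is correct. For the first identity your argument coincides with the paper's (the paper simply cites the vanishing of $\int_{\partial D_i}(-\tfrac12 I+(\K_D^{0,0})^*)[\varphi]\de\sigma$ and the identity $\int_{\partial D_i}\nu_x\de\sigma=0$; you spell out the harmonic-function argument behind the former).

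For the second identity you take a genuinely different route. The paper never computes $\Delta G_1^{\alpha_0}$. Instead it works with the full $\omega$-dependent operator: it evaluates $\int_{\partial D_i}\big(-\tfrac12 I+(\K_D^{-\omega\alpha_0,\omega})^*\big)[\varphi]\de\sigma$ in two ways---once via the operator expansion \eqref{eq:exp_hat}, which together with the first identity gives $\omega\int_{\partial D_i}(\K_{D,1}^{-\alpha_0})^*[\varphi]\de\sigma+O(\omega^2)$, and once via the jump relation and the Helmholtz equation inside $D_i$, which gives $-\omega^2\int_{D_i}\S_D^{\omega\alpha_0,\omega}[\varphi]\de x$. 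The $\omega^{-1}$ singularity of $\S_D^{\omega\alpha_0,\omega}$ then produces $\omega\cdot\frac{\iu|D_1|}{2w_3L^2}\int_{\partial D}\varphi\de\sigma$ at order $\omega$, and equating the two expressions yields the lemma. Your approach extracts the same $\omega^{-1}$ information, but at the level of the kernel rather than the operator: the cross-term $\omega^2\cdot(2\iu\omega w_3L^2)^{-1}$ in the expanded PDE is exactly what feeds the constant $\iu/(2w_3L^2)$ into $\Delta G_1^{\alpha_0}$. The paper's route has the advantage that $\S_D^{\omega\alpha_0,\omega}[\varphi]$ is a genuine Helmholtz solution in $D_i$, so no distributional care is needed; your route is more direct once one accepts the PDE-matching, and also explains \emph{why} $G_1^{\alpha_0}$ is in fact smooth on $D-D$ (elliptic regularity from $\Delta G_1^{\alpha_0}=\text{const}$ there), which justifies the Fubini and divergence-theorem steps you use.
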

	\begin{proof}
	For any $\varphi \in L^2(\p D)$ we have \cite{ammari2018mathematical}
	$$\int_{\D_i}\left(-\frac{1}{2}I+(\K_D^{0,0})^*\right)[\varphi]\de\sigma=0,$$
	and since $\int_{\D} \nu_x \dx \sigma(x) = 0$, the first equation follows. To prove the second equation, we use the first equation to conclude that, as $\omega\rightarrow 0$,
	\begin{equation}\label{eq:lemint1}
	\int_{\D_i}\left(-\frac{1}{2}I+(\K_D^{-\omega\alpha_0,\omega})^*\right)[\varphi]\de\sigma = \omega\int_{D_i}(\K_{D,1}^{-\alpha_0})^*[\varphi] + O(\omega^2).
	\end{equation}
	On the other hand, using the jump condition and integration by parts we have that
	\begin{align} \label{eq:lemint2}
	\int_{\D_i}\left(-\frac{1}{2}I+(\K_D^{-\omega\alpha_0,\omega})^*\right)[\varphi]\de\sigma &= \int_{D_i}\Delta\S_D^{\omega\alpha_0,\omega}[\varphi] \de x = -\omega^2\int_{D_i}\S_D^{\omega\alpha_0,\omega}[\varphi] \de x \nonumber \\
	&= \omega\frac{\iu|D_1|}{2w_3L^2}\int_{\D}\varphi \de\sigma + O(\omega^2). 
	\end{align}
	Since \eqref{eq:lemint1} and \eqref{eq:lemint2} hold for any small $\omega$, we obtain the second equation.
	\end{proof}
	The periodic single-layer potential could fail to be invertible and its kernel is described in the next lemma.
	\begin{lemma} \label{lem:ker}
		The dimension of 
		$\ker \S_D^{0,0}$ is at most one. Moreover, if $\varphi \in  \ker \S_D^{0,0}$ satisfies $\int_{\p D} \varphi \dx \sigma = 0,$ then $\varphi = 0$.
	\end{lemma}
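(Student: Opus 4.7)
The plan is to show that any $\varphi \in \ker \S_D^{0,0}$ with zero mean must vanish, and then deduce the one-dimensionality from linearity.

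First I would set $u := \S_D^{0,0}[\varphi]$ and view $u$ as a function on $\R^3$. By construction $u$ is $L$-periodic in $(x_1,x_2)$ and harmonic in $\R^3 \setminus \p \C$, where $\C$ denotes the full periodic array of copies of $D$. Since $\varphi \in \ker \S_D^{0,0}$ means $u|_{\p D} = 0$, periodicity gives $u|_{\p \C} = 0$. Restricting to $D$, the interior Dirichlet problem $\Delta u = 0$ with $u|_{\p D}=0$ has only the trivial solution, so $u \equiv 0$ inside $D$.

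Next I would analyze the far-field. Using the spectral representation \eqref{eq:specrep_0}, together with the fact that for $y \in \p D$ the Fourier modes $e^{-|q||x_3 - y_3|}$ with $q \neq 0$ decay exponentially, I get
$$\S_D^{0,0}[\varphi](x) = \frac{|x_3|}{2L^2}\int_{\p D}\varphi\dx\sigma + R(x),$$
where $R(x) \to 0$ uniformly in $(x_1,x_2)$ as $|x_3| \to \infty$. Assume now $\int_{\p D}\varphi\dx\sigma = 0$. Then $u$ is periodic, harmonic in $Y\setminus\overline{D}$, decays at $|x_3|\to\infty$, and vanishes on $\p D$. Integrating $|\nabla u|^2$ over $Y \setminus \overline{D}$ and using integration by parts, the boundary contributions on $\p Y$ cancel by periodicity while those at infinity vanish by the decay, giving $\int_{Y\setminus\overline{D}}|\nabla u|^2 \dx x = 0$. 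Hence $u$ is constant in the exterior, and by the decay this constant is zero. Combined with $u \equiv 0$ in $D$, the jump relation \eqref{eq:jump2} applied to $\S_D^{0,0}$ gives $\varphi = \p u/\p\nu|_+ - \p u/\p\nu|_- = 0$, which establishes the second assertion.

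For the first assertion, consider the linear functional $\varphi \mapsto \int_{\p D}\varphi\dx\sigma$ restricted to $\ker \S_D^{0,0}$. Its kernel within $\ker \S_D^{0,0}$ is trivial by what we just proved, so the restriction is injective into $\C$, forcing $\dim \ker \S_D^{0,0} \leq 1$. The only step that requires any care is the far-field argument: one must justify the uniform decay of $R(x)$ and the vanishing of the flux terms at infinity, but both follow directly from the exponential decay of the non-zero Fourier modes in \eqref{eq:specrep_0} and standard energy estimates for periodic harmonic functions in a slab.
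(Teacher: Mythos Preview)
Your argument is essentially correct and reaches the conclusion, but it differs substantially from the paper's proof. One small imprecision: your claim that $R(x)\to 0$ is not quite right, since the term $|x_3-y_3|/2L^2$ in $G^{0,0}$ contributes, for large $|x_3|$, both the linear term you isolate \emph{and} the constant $\mp\tfrac{1}{2L^2}\int_{\p D}y_3\varphi\dx\sigma$ as $x_3\to\pm\infty$. This does not damage the argument: $\nabla u$ still decays exponentially, so the flux at infinity vanishes, the energy identity gives $u$ constant on $Y\setminus\overline{D}$, and the boundary condition $u|_{\p D}=0$ (rather than decay) forces that constant to be zero.

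The paper takes a completely different, purely algebraic route. It observes that for small $k>0$ the operator $\hat\S_D^{0,k}$ is invertible (as a small perturbation of the invertible $\S_D^{0,k}$, cf.\ \Cref{lem:inv} and \eqref{eq:exp_hat}), and from the definition \eqref{eq:Shat} with $\alpha=0$ one has
\[
\S_D^{0,0}[\varphi]=\hat\S_D^{0,k}[\varphi]+\frac{\iu}{2k_3L^2}\int_{\p D}\varphi\dx\sigma,
\]
so $\S_D^{0,0}$ is a rank-one perturbation of an invertible operator. Both assertions then follow immediately: the kernel has dimension at most one, and any kernel element with zero mean lies in $\ker\hat\S_D^{0,k}=\{0\}$. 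Your approach is more self-contained and uses only elementary potential theory, avoiding the machinery of $\hat\S_D^{\alpha,k}$ and \Cref{lem:inv}; the paper's approach is shorter and fits naturally into the surrounding framework, which in any case needs $\hat\S_D^{\alpha,k}$ and its invertibility for \Cref{lem:holo} and beyond.
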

\begin{proof}
	For small but non-zero $k$ we know from \Cref{lem:inv} that $\S_D^{0,k}$ is invertible, and therefore, by \eqref{eq:exp_hat}, $\hat\S_D^{0,k}$ is also invertible for small $k$. We can write $\S_D^{0,0}$ as
	$$\S_D^{0,0}[\varphi] = \hat{\S}_D^{0,k}[\varphi] + \frac{\iu }{2 k_3 L^2} \int_{\p D}\varphi \dx \sigma,$$
	or, in other words, $\S_D^{0,0}$ is a rank-1 perturbation of the invertible operator $\hat\S_D^{0,k}$. This shows that $\dim \ker \S_D^{0,0} \leq 1$ and, moreover, that any non-zero $\varphi  \in\ker \S_D^{0,0}$ satisfies $\int_{\p D} \varphi \dx \sigma \neq 0$.
	\end{proof}
\begin{lemma} \label{lem:phi=0}
	If $\S_D^{0,0}[\varphi] = K\Chi_{\p D}$ for some constant $K$ and some $\varphi \in L^2(\p D)$ satisfying $\int_{\p D} \varphi \dx \sigma = 0$, then $\varphi = 0$.
\end{lemma}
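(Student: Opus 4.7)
The plan is to let $u := \S_D^{0,0}[\varphi]$ and exploit the fact that the condition $\int_{\p D}\varphi\dx\sigma = 0$ eliminates the linearly growing term of $G^{0,0}$ so that $u$ remains bounded at infinity. Specifically, from the spectral representation \eqref{eq:specrep_0} one sees that for $|x_3|$ larger than $\max_{y\in\p D}|y_3|$,
\begin{equation*}
u(x) = -\frac{\mathrm{sign}(x_3)}{2L^2}\int_{\p D} y_3 \varphi(y) \dx\sigma(y) + R(x),
\end{equation*}
where $R(x)$ is a sum of terms that decay exponentially in $|x_3|$, uniformly in $(x_1,x_2)$, together with all their derivatives. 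Thus $u$ tends to constants $c_\pm$ as $x_3 \to \pm\infty$ and $\nabla u \to 0$ exponentially fast.

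Next, using that $u|_{\p D} = K$ (the continuity of the single layer potential plus the hypothesis), I would solve the interior problem: $u$ is harmonic in $D$ with constant Dirichlet data $K$, so by uniqueness $u \equiv K$ in $D$ and in particular $\p u/\p\nu|_- = 0$ on $\p D$. For the exterior, set $w := u - K$ on the periodic strip $Y\setminus\overline{D}$, which is connected. Then $w$ is harmonic, periodic in $(x_1,x_2)$, vanishes on $\p D$, and $w \to c_\pm - K$ with $\nabla w$ decaying exponentially as $x_3\to\pm\infty$. Applying Green's identity on the truncated domain $\Omega_R := (Y \cap \{|x_3|<R\}) \setminus \overline{D}$ gives
\begin{equation*}
\int_{\Omega_R} |\nabla w|^2 \dx x = -\int_{\p D} w \frac{\p w}{\p\nu}\dx\sigma + \int_{\{x_3=R\}} w\frac{\p w}{\p x_3}\dx\sigma - \int_{\{x_3=-R\}} w\frac{\p w}{\p x_3}\dx\sigma + (\text{side walls}).
\end{equation*}
The $\p D$ integral vanishes since $w=0$ there, the side-wall contributions cancel by periodicity, and the two horizontal face integrals vanish in the limit $R\to\infty$ because $w$ is bounded while $\p w/\p x_3$ is exponentially small. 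Hence $\nabla w \equiv 0$ on $Y\setminus\overline{D}$, and since $w$ vanishes on $\p D$ and $Y\setminus\overline{D}$ is connected, $w\equiv 0$. In particular $\p u/\p\nu|_+ = 0$ on $\p D$, and the jump relation \eqref{eq:jump2} (in the periodic setting) yields $\varphi = \p u/\p\nu|_+ - \p u/\p\nu|_- = 0$.

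The main obstacle is the control at $x_3 = \pm\infty$: one must verify carefully, from \eqref{eq:specrep_0}, both that the vanishing of $\int\varphi\dx\sigma$ kills the linearly growing piece (so $w$ is bounded) and that $\nabla w$ decays rapidly enough to make the top and bottom boundary terms disappear in the integration-by-parts argument. Everything else is a routine application of unique continuation and the standard jump relations.
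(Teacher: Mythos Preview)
Your argument is correct and follows essentially the same strategy as the paper: set $V=\S_D^{0,0}[\varphi]$, use the hypothesis $\int_{\p D}\varphi\dx\sigma=0$ together with the spectral representation \eqref{eq:specrep_0} to see that $V$ is bounded and tends to constants $\pm V_\infty$ as $x_3\to\pm\infty$, and then run an energy/Green's identity on the periodic strip $Y\setminus\overline{D}$.

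The only difference is in the endgame. The paper combines the energy identity with the observation that, if $K\neq 0$, the antisymmetric limits $\pm V_\infty$ force $\nabla V\not\equiv 0$; this yields $K=0$, and then the previous \Cref{lem:ker} (on $\ker\S_D^{0,0}$) is invoked to conclude $\varphi=0$. You instead apply the energy identity to $w=u-K$, obtain $w\equiv 0$ directly, and read off $\varphi=0$ from the jump relation without appealing to \Cref{lem:ker}. Your variant is slightly more self-contained; the paper's is marginally shorter because \Cref{lem:ker} is already available. Either way, the substance of the proof is the same.
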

\begin{proof}
	For $x\in \R^3\setminus \C$, define $V(x) := \S_D^{0,0}[\varphi](x)$. Then $V$ solves the following differential problem,
	\begin{equation} \label{eq:V}
	\left\{
	\begin{array} {ll}
	\ds \Delta V  = 0 & \text{in } \R^3\setminus \C, \\[0.3em]
	\nm
	\ds  V|_{+}  = K  & \text{on } \partial \C, \\[0.3em]
	\nm
	\ds  V(x+(m_1,m_2,0))  = V(x)  & \text{for all } (m_1,m_2) \in \Lambda, \\
	\nm
	\ds V(x) \rightarrow \pm V_\infty & \text{as }  x_3 \rightarrow \pm\infty,
	\end{array}
	\right.
	\end{equation}
	for some constant $V_\infty$. Moreover, using the jump relations and integration by parts, we have that 
	$$\int_{\p D} \varphi \dx \sigma = K\int_{Y \setminus D} |\nabla V|^2 \dx x=0.$$
	If $K\neq 0$, it follows from \eqref{eq:V} that $\int_{Y \setminus D} |\nabla V|^2 \dx x\neq0$. In other words we must have $K=0$ and, since $\int_{\p D} \varphi \dx \sigma = 0$, it follows from \Cref{lem:ker} that $\varphi = 0$.
\end{proof}
Let $L^2_0(\p D)$ be the mean-zero space defined as $$L^2_0(\p D) = \left\{f \in L^2(\p D) \ \left| \ \int_{\p D} f \dx \sigma = 0 \right. \right\}.$$
By \Cref{lem:ker} and \Cref{lem:phi=0}, $\S_D^{0,0}$ is invertible from $L^2_0(\p D)$ onto its image, which does not contain the constant functions.

We will now define the analogous capacitance coefficients in the periodic setting. We begin with the following lemma.
	\begin{lemma} \label{lem:holo}
		For any $\alpha_0 \in Y^*$ with $|\alpha_0| < 1$,  $\left(\hat{\S}_D^{\omega\alpha_0,\omega}\right)^{-1}$ is a holomorphic operator-valued function of $\omega$ in a neighbourhood of $\omega = 0$.
	\end{lemma}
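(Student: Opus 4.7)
\emph{Proof proposal.} The plan is to isolate the singular $1/\omega$ part of $\hat\S_D^{\omega\alpha_0,\omega}$ as a rank-one perturbation of an $\omega$-independent operator, and then invert by a Schur-complement calculation.

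Substituting $\alpha=\omega\alpha_0$ and $k_3=\omega w_3$ into \eqref{eq:Shat}, I rewrite
\[
\hat\S_D^{\omega\alpha_0,\omega}[\varphi] \;=\; B[\varphi] \;-\; \frac{\iu}{2\omega w_3 L^2}\,\ell(\varphi)\,\Chi_{\p D},\qquad \ell(\varphi):=\int_{\p D}\varphi\,\dx\sigma,
\]
where $B:L^2(\p D)\to H^1(\p D)$ is bounded and $\omega$-independent, collecting $\S_D^{0,0}$ with the two rank-one terms coming from the $\alpha_0\cdot(x_1,x_2)$ and $\alpha_0\cdot(y_1,y_2)$ factors in \eqref{eq:Shat}. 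Thus the entire $\omega$-dependence is concentrated in a single rank-one operator whose range is $\R\Chi_{\p D}$ and whose kernel is the mean-zero subspace $L^2_0(\p D):=\{\varphi\in L^2(\p D):\ell(\varphi)=0\}$.

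Using the splittings $L^2(\p D)=L^2_0(\p D)\oplus\R\Chi_{\p D}$ and $H^1(\p D)=H^1_0(\p D)\oplus\R\Chi_{\p D}$, I represent $\hat\S_D^{\omega\alpha_0,\omega}$ as a $2\times 2$ block operator. The upper-left entry $A$ is simply $\S_D^{0,0}$ restricted to $L^2_0(\p D)$ composed with projection onto $H^1_0(\p D)$: the $\alpha_0\cdot(x_1,x_2)$ piece of $B$ vanishes on $L^2_0$ because $\ell$ does there, and the $\alpha_0\cdot(y_1,y_2)$ piece produces a constant function that is killed by the projection. The off-diagonal blocks $b$ and $c^{\mathrm{T}}$ (both of rank one) are $\omega$-independent, while the scalar $(2,2)$-entry carries the full singularity in the form $d_0+c_0/\omega$ with $c_0=-\iu|\p D|/(2w_3 L^2)\neq 0$.

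Provided $A:L^2_0(\p D)\to H^1_0(\p D)$ is invertible, the Schur-complement formula then yields
\[
(\hat\S_D^{\omega\alpha_0,\omega})^{-1} = \begin{pmatrix} A^{-1}+s(\omega)\,A^{-1}b\,c^{\mathrm{T}}A^{-1} & -s(\omega)\,A^{-1}b \\ -s(\omega)\,c^{\mathrm{T}}A^{-1} & s(\omega) \end{pmatrix},\qquad s(\omega):=\frac{\omega}{c_0+\omega(d_0-c^{\mathrm{T}}A^{-1}b)}.
\]
Because $c_0\neq 0$, the scalar $s$ is holomorphic in a neighbourhood of $\omega=0$ (with $s(0)=0$), and therefore so is every block of $(\hat\S_D^{\omega\alpha_0,\omega})^{-1}$. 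The main obstacle I foresee is the invertibility of $A$. Injectivity is immediate from \Cref{lem:phi=0}, but the surjectivity requires combining the Fredholmness of $\S_D^{0,0}$ (a compact perturbation of the classical Laplace single-layer potential) with the remarks, based on \Cref{lem:ker} and \Cref{lem:phi=0}, that $\S_D^{0,0}[L^2_0(\p D)]$ contains no nonzero constants and that every element of $\ker\S_D^{0,0}$ has nonzero mean; these two facts together let one choose $C\in\R$ with $g+C\Chi_{\p D}\in\mathrm{Im}\,\S_D^{0,0}$ for any $g\in H^1_0(\p D)$ and then adjust the preimage modulo $\ker\S_D^{0,0}$ to land in $L^2_0(\p D)$.
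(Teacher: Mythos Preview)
Your proof is correct but takes a genuinely different route from the paper's. The paper argues non-constructively: it first invokes a general meromorphic Fredholm result to conclude that $(\hat\S_D^{\omega\alpha_0,\omega})^{-1}$ is meromorphic near $\omega=0$, and then shows by contradiction that the principal part vanishes---assuming the inverse blows up, one produces a nonzero $\phi_0\in L^2_0(\p D)$ with $\S_D^{0,0}[\phi_0]$ constant, which \Cref{lem:phi=0} forbids. Your Schur-complement argument is instead constructive: having isolated the only $\omega$-dependence in a rank-one term, you write down an explicit formula for the inverse whose sole $\omega$-dependence sits in the scalar $s(\omega)=\omega/(c_0+O(\omega))$, manifestly holomorphic since $c_0\neq0$. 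Both arguments ultimately rest on \Cref{lem:phi=0}, but you use it to prove invertibility of the $(1,1)$ block $A$ rather than to reach a contradiction. Your route buys an explicit inversion formula (potentially reusable downstream, \emph{e.g.}\ for \Cref{lem:psi_err}) at the price of verifying surjectivity of $A$, which the paper sidesteps via the black-box meromorphy result. One small gap in your surjectivity sketch: if $\ker\S_D^{0,0}$ happens to be trivial you cannot ``adjust the preimage modulo $\ker\S_D^{0,0}$''; in that case, however, $\S_D^{0,0}$ is invertible and one instead varies $C$ directly, using that $\ell\big((\S_D^{0,0})^{-1}[\Chi_{\p D}]\big)\neq 0$ (again by \Cref{lem:phi=0}) to hit $L^2_0(\p D)$.
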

\begin{proof}
	We know that $\hat{\S}_D^{\omega\alpha_0,\omega}$ is a meromorphic operator-valued function of $\omega$ with a pole at $\omega = 0$. From \cite[Corollary 1.10]{ammari2018mathematical}, we find that $\left(\hat{\S}_D^{\omega\alpha_0,\omega}\right)^{-1}$ is also meromorphic for $\omega$ in a neighbourhood of $0$. It remains to show that the principal part vanishes.
	
	To reach a contradiction, we assume that $\left(\hat{\S}_D^{\omega\alpha_0,\omega}\right)^{-1}$ is singular as $\omega \rightarrow 0$, which means that there is some $\phi$, depending on $\omega$, such that $\|\phi\|_{L^2(\p D)} = O(1)$ while $\|\hat{\S}_D^{\omega\alpha_0,\omega}[\phi]\|_{H^1(\p D)} = O(\omega)$. We can rewrite $\phi$ as $\phi = \phi_0 + \phi_1$ where $\phi_0$ is non-zero and independent of $\omega$, while $\phi_1 = O(\omega)$. Then the singular part of $\hat{\S}_D^{\alpha,k}$ must vanish on $\phi_0$, \ie{},
	$$\int_{\p D}\phi_0 \dx \sigma = 0.$$
	Substituting into \eqref{eq:Shat} we find that, for some constant $K$, we have $\S_D^{0,0}[\phi_0] = K\Chi_{\p D}.$
	It then follows from \Cref{lem:phi=0} that $\phi_0 = 0$, which contradicts the fact that $\| \phi \| = O(1)$.
	\end{proof}
	We can now define the periodic capacitance coefficients $C^{0}_{ij}$. For $\alpha \in Y^*$, we let
	$$\psi_i^{\alpha,\omega} = \left(\hat{\S}_D^{\alpha,\omega}\right)^{-1}[\Chi_{\p D_i}]. $$
	Then, if $\alpha = \omega\alpha_0$ for some fixed $\alpha_0$ with $|\alpha_0| < 1$, we have the following expansion from \Cref{lem:holo},
	\begin{equation} \label{eq:psi0}
	\psi_i^{\omega\alpha_0,\omega} = \psi_i^{0} + \omega\hat{\psi}_i^{1,\alpha_0} + O(\omega^2),
	\end{equation}
	as $\omega\to 0$, for some $\psi_i^{0}, \hat{\psi}_i^{1,\alpha_0} \in L^2(\p D)$ independent of $\omega$. We then define		
	\begin{equation}
	C_{ij}^{0}=-\int_{\D_i} \psi_j^{0}\de\sigma.
	\end{equation}
	We call the matrix $C^{0}  = (C_{ij}^{0})$ the \emph{periodic capacitance matrix} (not to be confused with the \emph{quasi}periodic capacitance matrix, studied in \Cref{sec:band}). The periodic capacitance matrix might \emph{a priori} depend on $\alpha_0$, but we will later see that, under an extra symmetry condition, $\psi_j^{0}$ and $C^{0}$ are independent of $\alpha_0$.
	\begin{lemma} \label{lem:cap0}
		The periodic capacitance matrix $C^{0}$ is a real matrix given by		
		\begin{equation*}
		C^{0} = C_{11}^{0}\begin{pmatrix} 1 & -1 \\ -1 & 1\end{pmatrix}.
		\end{equation*}
		\end{lemma}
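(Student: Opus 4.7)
The plan is to extract a working characterization of $\psi_i^{0}$ directly from its definition, and then deduce the structure of $C^{0}$ from (i) the reality of the periodic Green's function $G^{0,0}$ in \eqref{eq:specrep_0} and (ii) the parity hypothesis $D_1 = -D_2$ together with the evenness of $G^{0,0}$. The characterization comes from substituting the expansion \eqref{eq:psi0} into the defining equation $\hat{\S}_D^{\omega\alpha_0,\omega}[\psi_i^{\omega\alpha_0,\omega}] = \Chi_{\p D_i}$ using the explicit formula \eqref{eq:Shat}; the only $\omega$-dependence on the left-hand side sits in the singular term $-\frac{\iu}{2\omega w_3 L^2}\int_{\p D}\varphi \,\de\sigma$, so matching powers of $\omega$ gives at order $\omega^{-1}$ the constraint $\int_{\p D}\psi_i^{0}\, \de\sigma = 0$, and at order $\omega^{0}$ an equation of the form
$$\S_D^{0,0}[\psi_i^{0}](x) = \Chi_{\p D_i}(x) + c_i$$
for a constant $c_i$ (possibly depending on $\alpha_0$ and on the next-order profile $\hat\psi_i^{1,\alpha_0}$). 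Already the first relation forces $C_{1i}^{0} + C_{2i}^{0} = 0$, so the rows of $C^{0}$ sum to zero.

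To obtain reality, I would note that $G^{0,0}$ is real-valued by \eqref{eq:specrep_0}, so $\S_D^{0,0}$ preserves real-valuedness. Taking complex conjugates in the displayed equation shows that $\overline{\psi_i^{0}}$ satisfies the same equation with $\overline{c_i}$ in place of $c_i$, and hence $\psi_i^{0} - \overline{\psi_i^{0}}$ lies in $L^2_0(\p D)$ while $\S_D^{0,0}[\psi_i^{0} - \overline{\psi_i^{0}}] = (c_i - \overline{c_i})\Chi_{\p D}$. Lemma \ref{lem:phi=0} then forces $\psi_i^{0} = \overline{\psi_i^{0}}$ (and incidentally $c_i \in \R$), so every entry of $C^{0}$ is real.

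For the symmetry, I would introduce the parity operator $(\P f)(x) = f(-x)$. The assumption $D_1 = -D_2$ gives $\P\Chi_{\p D_1} = \Chi_{\p D_2}$, and a change of variables $y \mapsto -y$ together with the evenness of $G^{0,0}$ yields the intertwining relation $\S_D^{0,0}\P = \P \S_D^{0,0}$. Applying $\P$ to $\S_D^{0,0}[\psi_1^{0}] = \Chi_{\p D_1} + c_1$ and subtracting the corresponding equation for $\psi_2^{0}$ produces $\S_D^{0,0}[\P\psi_1^{0} - \psi_2^{0}] = (c_1 - c_2)\Chi_{\p D}$ with $\P\psi_1^{0} - \psi_2^{0} \in L^2_0(\p D)$; a second application of Lemma \ref{lem:phi=0} gives $\psi_2^{0}(x) = \psi_1^{0}(-x)$. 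Changing variables in $C_{ij}^{0} = -\int_{\p D_i}\psi_j^{0}\, \de\sigma$ then yields $C_{22}^{0} = C_{11}^{0}$ and $C_{12}^{0} = C_{21}^{0}$, and combined with $C_{21}^{0} = -C_{11}^{0}$ from the first step one recovers exactly the claimed form $C^{0} = C_{11}^{0}\left(\begin{smallmatrix} 1 & -1 \\ -1 & 1 \end{smallmatrix}\right)$.

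The delicate point is the repeated use of Lemma \ref{lem:phi=0}: in each application one must verify that the candidate element lies in $L^2_0(\p D)$ and that the image under $\S_D^{0,0}$ really is a scalar multiple of $\Chi_{\p D}$, which is why it is essential to keep the constant $c_i$ in the first step as a bona fide unknown rather than discarding it. Once this bookkeeping is in place the remaining manipulations are just routine changes of variables.
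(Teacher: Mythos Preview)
Your argument is correct and follows the paper's proof closely: both begin by expanding $\hat{\S}_D^{\omega\alpha_0,\omega}[\psi_i^{\omega\alpha_0,\omega}]=\Chi_{\p D_i}$ to obtain $\int_{\p D}\psi_i^{0}\,\de\sigma=0$ and $\S_D^{0,0}[\psi_i^{0}]=\Chi_{\p D_i}+c_i\Chi_{\p D}$, and both establish reality by applying Lemma~\ref{lem:phi=0} to $\psi_i^{0}-\overline{\psi_i^{0}}$. One small slip: the relation $C_{1i}^{0}+C_{2i}^{0}=0$ you derive says that the \emph{columns} of $C^{0}$ sum to zero, not the rows; this does not affect the conclusion once combined with your later identities.

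The only genuine difference is in how the second symmetry is obtained. The paper simply \emph{adds} the two characterizing equations to get $\S_D^{0,0}[\psi_1^{0}+\psi_2^{0}]=\widetilde K\Chi_{\p D}$ and invokes Lemma~\ref{lem:phi=0} to conclude $\psi_1^{0}=-\psi_2^{0}$, which directly gives $C_{i1}^{0}=-C_{i2}^{0}$. You instead exploit the parity hypothesis $D_1=-D_2$ and the evenness of $G^{0,0}$ to deduce $\psi_2^{0}(x)=\psi_1^{0}(-x)$, then pull this through the integrals to get $C_{11}^{0}=C_{22}^{0}$ and $C_{12}^{0}=C_{21}^{0}$. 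Both routes are valid; the paper's is slightly more economical in that it does not invoke the $\P$-symmetry of $D$ at this stage (so Lemma~\ref{lem:cap0} in fact holds without that hypothesis), whereas your route has the modest bonus of producing the pointwise relation $\psi_2^{0}=\psi_1^{0}\circ\P$, which is additional structural information.
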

\begin{proof}
	Since $\hat \S_D^{\omega \alpha_0,\omega}[\psi_i^{\omega\alpha_0,\omega}]$ is bounded as $\omega \to 0$, the singular part of $\hat \S_D^{\omega \alpha_0,\omega}$ must vanish on $ \psi_i^{{0}}$, or in other words
	$$\int_{\p D} \psi_i^{{0}}  \dx \sigma = 0.$$
	From this it follows that $C_{i1}^{0} = - C_{i2}^{0}$ for $i=1,2$. From the condition $\hat \S_D^{\omega \alpha_0,\omega}[\psi_i^{\omega\alpha_0,\omega}] = \Chi_{\p D_i}$, we have that 
	\begin{align} \label{eq:leadingorder}
	\Chi_{\p D_i} &= \S_D^{0,0}[\psi_i^{0}] + \frac{1}{2\iu w_3 L^2}\int_{\p D}\hat{\psi}_i^{1,\alpha_0} \dx \sigma - \int_{\p D}\frac{\alpha_0 \cdot (y_1,y_2)}{2w_3L^2} \psi_i^{0}(y)\dx \sigma (y),
	\end{align}
	or, in other words, that $\S_D^{0,0}[\psi_i^{0}] = \Chi_{\p D_i} + K\Chi_{\p D}$
	for some constant $K$. Summing over the resonators, we find that $\S_D^{0,0}[\psi_1^{0} + \psi_2^{0}] = \widetilde K\Chi_{\p D}$. By \Cref{lem:phi=0} we find that $\widetilde K = 0$ and that 
	\begin{equation} \label{eq:psi1,2}
		\psi_1^{0} = - \psi_2^{0}.
	\end{equation}
	It follows that $C_{1j}^{0} = -C_{2j}^{0}$ for $j=1,2$, which proves the expression of $C^{0}$. It remains to prove that $C_{11}^{0}$ is real. Taking the complex conjugate of \eqref{eq:leadingorder} we find that $\S_D^{0,0}[\psi_i^{0} - \overline{\psi_i^{0}}] = K\Chi_{\D}$ for some new constant $K$. From \Cref{lem:phi=0} we find that $\psi_i^{0} = \overline{\psi_i^{0}}$, and hence $ C_{11}^{0} = \overline{ C_{11}^{0}}$.
\end{proof}

%At $\alpha_0=0$, we can show that $\hat{\psi}_1^{1,0} = \hat{\psi}_2^{1,0}$. Moreover, if $\S_D^{0,0}[\varphi] = K\Chi_{\p D}$ for some $\varphi$ and constant $K$, it follows that, for some other constant $\tilde{K}$,
%$$\varphi = \tilde{K}\hat{\psi}_1^{1,0}.$$
%If $\S_D^{0,0}$ has a nontrivial kernel we must have $K=0$ and the kernel is spanned by $\hat{\psi}_1^{1,0}$. 

\begin{lemma} \label{lem:psi_err}
	As $\omega \to 0$, we have 
	$$\left({\S}_D^{\omega\alpha_0,\omega}\right)^{-1}[\Chi_{\p D_j}] = \psi_j^{{0}} + \omega \psi_j^{1,\alpha_0} +  O(\omega^2),$$
	where 
	$$ \psi_j^{1,\alpha_0} = \hat{\psi}_j^{1,\alpha_0} - \left(\hat\S_D^{\omega\alpha_0,\omega}\right)^{-1}\S_1^{\alpha_0} \psi_j^{0}.$$
\end{lemma}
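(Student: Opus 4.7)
The plan is to treat this as a standard first-order perturbation of an operator inverse, leveraging \Cref{lem:holo} to ensure the perturbation is around an invertible operator whose inverse is uniformly bounded as $\omega \to 0$. The starting point is the expansion \eqref{eq:exp_hat}, which gives
$$\S_D^{\omega\alpha_0,\omega} = \hat{\S}_D^{\omega\alpha_0,\omega} + \omega \S_1^{\alpha_0} + O(\omega^2)$$
in operator norm. By \Cref{lem:holo}, $\big(\hat{\S}_D^{\omega\alpha_0,\omega}\big)^{-1}$ is holomorphic and hence uniformly bounded in a neighbourhood of $\omega = 0$, so the standard Neumann-series identity
$$\left(B + \omega C + O(\omega^2)\right)^{-1} = B^{-1} - \omega B^{-1} C B^{-1} + O(\omega^2)$$
applies with $B = \hat{\S}_D^{\omega\alpha_0,\omega}$ and $C = \S_1^{\alpha_0}$, yielding
$$\left(\S_D^{\omega\alpha_0,\omega}\right)^{-1} = \left(\hat{\S}_D^{\omega\alpha_0,\omega}\right)^{-1} - \omega \left(\hat{\S}_D^{\omega\alpha_0,\omega}\right)^{-1} \S_1^{\alpha_0} \left(\hat{\S}_D^{\omega\alpha_0,\omega}\right)^{-1} + O(\omega^2).$$

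Next I would apply both sides to the characteristic function $\Chi_{\p D_j}$. The first term on the right-hand side is, by definition, $\psi_j^{\omega\alpha_0,\omega}$, whose expansion is given in \eqref{eq:psi0} as $\psi_j^{0} + \omega \hat{\psi}_j^{1,\alpha_0} + O(\omega^2)$. For the second term, since it already carries an explicit factor of $\omega$, I can replace the inner inverse acting on $\Chi_{\p D_j}$ by its leading-order value $\psi_j^{0}$, absorbing the $O(\omega)$ correction into the $O(\omega^2)$ remainder. This gives
$$\left(\S_D^{\omega\alpha_0,\omega}\right)^{-1}[\Chi_{\p D_j}] = \psi_j^{0} + \omega\Bigl(\hat{\psi}_j^{1,\alpha_0} - \left(\hat{\S}_D^{\omega\alpha_0,\omega}\right)^{-1} \S_1^{\alpha_0}\,\psi_j^{0}\Bigr) + O(\omega^2),$$
which is exactly the stated identity after reading off the coefficient of $\omega$ as $\psi_j^{1,\alpha_0}$.

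There is no substantive obstacle here, since all of the real work has been carried out already: the uniform invertibility of the regularized operator $\hat{\S}_D^{\omega\alpha_0,\omega}$ is exactly the content of \Cref{lem:holo}, and the expansion of $\psi_j^{\omega\alpha_0,\omega}$ is contained in \eqref{eq:psi0}. The only point requiring a modicum of care is verifying that the remainder terms are genuinely $O(\omega^2)$ in the $H^1(\p D)$ norm; this follows because the $O(\omega^2)$ operator-norm error in \eqref{eq:exp_hat}, when post-composed with the bounded family $\big(\hat{\S}_D^{\omega\alpha_0,\omega}\big)^{-1}$, remains $O(\omega^2)$ as an operator from $L^2(\p D)$ to $H^1(\p D)$, and analogously for the cross-term. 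The proof is therefore a short perturbative bookkeeping argument built on top of the two preceding lemmas.
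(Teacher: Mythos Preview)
Your proposal is correct and follows essentially the same approach as the paper: both use the expansion \eqref{eq:exp_hat}, apply the Neumann series (justified via \Cref{lem:holo}) to invert the perturbed operator, and then substitute \eqref{eq:psi0}. One minor slip: the remainder should be controlled in the $L^2(\p D)$ norm rather than $H^1(\p D)$, since $\big(\S_D^{\omega\alpha_0,\omega}\big)^{-1}$ maps $H^1(\p D)$ to $L^2(\p D)$, but this does not affect the argument.
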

\begin{proof}
	From \eqref{eq:exp_hat}, and using the Neumann series, we have
\begin{align*}
\left({\S}_D^{\omega\alpha_0,\omega}\right)^{-1}[\Chi_{\p D_j}] &= \left(\hat{\S}_D^{\omega\alpha_0,\omega}\right)^{-1}[\Chi_{\p D_j}] - \omega\left(\hat{\S}_D^{\omega\alpha_0,\omega}\right)^{-1}\S_1^{\alpha_0}\left(\hat{\S}_D^{\omega\alpha_0,\omega}\right)^{-1}[\Chi_{\p D_j}] + O(\omega^2), \\
&= \psi_j^{{0}} + \omega \psi_j^{1,\alpha_0} +  O(\omega^2),
\end{align*}
which proves the claim.
\end{proof}

	Analogously to before, we define the weighted periodic capacitance matrix as 
	\begin{equation*}
	C^{v,{0}}=V C^{0}.
	\end{equation*}
	From \Cref{lem:cap0}, we find that the eigenvalues $\lambda_1^{0}, \lambda_2^{0}$ and corresponding eigenvectors $\v_1^{0}, \v_2 ^{0}$ of $C^{v,{0}}$ are given by
	$$\lambda_1^{0} = 0,\quad \lambda_2^{0} = 2aC_{11}^{0}, \qquad \v_1^{0} = \begin{pmatrix} 1 \\ 1 \end{pmatrix}, \quad \v_2^{0} = \begin{pmatrix} -(a + \iu b) \\ a-\iu b \end{pmatrix}.$$
	As we shall see, the weighted periodic capacitance matrix asymptotically describes the resonant frequencies and the scattered field to leading order. In addition, we will need to consider two sources of higher-order effects. Firstly, we define the vector-valued coefficients $\mathbf{c}_i$ as
	\begin{equation}
	\mathbf{c}_i=\int_{\D} y\psi_i^{{0}}(y)\de\sigma(y), \quad i = 1,2.
	\end{equation}	
	From \eqref{eq:psi1,2} we have that $\mathbf{c}_1 = -\mathbf{c}_2$. Secondly, we define the matrix $C^{1,\alpha_0} = (C_{ij}^{1,\alpha_0})$ as 
	$$C_{ij}^{1,\alpha_0} = -\int_{\D_i} \psi_j^{1,\alpha_0} \dx \sigma,$$
	for $i,j = 1,2$. Corresponding weighted matrix $C^{v,1,\alpha_0} = (C_{ij}^{v,1,\alpha_0})$ is defined as
	$$C^{v,1,\alpha_0} = VC^{1,\alpha_0}.$$		
	The next lemma describes some of the structure of $C^{1,\alpha_0}$.
	\begin{lemma} \label{lem:C1}
		We have 
		$$ \int_{\p D}{\psi}_j^{1,\alpha_0} \dx \sigma = \int_{\p D}\hat{\psi}_j^{1,\alpha_0} \dx \sigma + O(\omega) \quad \text{and} \quad 
		\int_{\p D}{\psi}_1^{1,\alpha_0} + {\psi}_2^{1,\alpha_0} \dx \sigma = 2\iu w_3 L^2.
		$$
	\end{lemma}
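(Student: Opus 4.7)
The plan is to prove the two statements in sequence, exploiting the explicit structure of the singular perturbation \eqref{eq:Shat} of the periodic single layer potential and the key identity $\psi_1^{0} + \psi_2^{0} = 0$ established inside the proof of \Cref{lem:cap0}.

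\textbf{First equation.} From the definition $\psi_j^{1,\alpha_0} = \hat{\psi}_j^{1,\alpha_0} - (\hat{\S}_D^{\omega\alpha_0,\omega})^{-1}\S_1^{\alpha_0}[\psi_j^{0}]$, it suffices to show that $\int_{\p D}(\hat{\S}_D^{\omega\alpha_0,\omega})^{-1}\S_1^{\alpha_0}[\psi_j^{0}]\,\dx\sigma = O(\omega)$. Let $\varphi_\omega := (\hat{\S}_D^{\omega\alpha_0,\omega})^{-1}\S_1^{\alpha_0}[\psi_j^{0}]$. By \Cref{lem:holo}, $\varphi_\omega$ is holomorphic in $\omega$ and therefore bounded in $L^2(\p D)$ as $\omega\to 0$, while the relation $\hat{\S}_D^{\omega\alpha_0,\omega}[\varphi_\omega] = \S_1^{\alpha_0}[\psi_j^{0}]$ has a bounded right-hand side. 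Reading off \eqref{eq:Shat}, the only $\omega^{-1}$-singular contribution on the left is $-\frac{\iu}{2\omega w_3 L^2}\Chi_{\p D}\int_{\p D}\varphi_\omega\,\dx\sigma$, and this singularity must cancel, forcing $\int_{\p D}\varphi_\omega\,\dx\sigma = O(\omega)$.

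\textbf{Second equation.} Since $\psi_1^{0} + \psi_2^{0} = 0$, the linear operator $(\hat{\S}_D^{\omega\alpha_0,\omega})^{-1}\S_1^{\alpha_0}$ applied to $\psi_1^{0} + \psi_2^{0}$ vanishes identically, giving the \emph{exact} identity $\psi_1^{1,\alpha_0} + \psi_2^{1,\alpha_0} = \hat{\psi}_1^{1,\alpha_0} + \hat{\psi}_2^{1,\alpha_0}$. It remains to compute the right-hand integral. Substitute the expansion \eqref{eq:psi0} and the explicit formula \eqref{eq:Shat} into $\hat{\S}_D^{\omega\alpha_0,\omega}[\psi_i^{\omega\alpha_0,\omega}] = \Chi_{\p D_i}$ and match the coefficient of $\omega^0$. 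The $\omega^{-1}$ singularity of the left-hand side is cancelled by $\int_{\p D}\psi_i^{0}\,\dx\sigma = 0$, and the $\alpha_0\cdot(x_1,x_2)$-dependent term also drops by the same vanishing mean. What survives reads
\begin{equation*}
\S_D^{0,0}[\psi_i^{0}] = \Chi_{\p D_i} + \left(\frac{\iu}{2w_3L^2}\int_{\p D}\hat{\psi}_i^{1,\alpha_0}\,\dx\sigma + \frac{\alpha_0\cdot\mathbf{c}_i^{(2)}}{2w_3L^2}\right)\Chi_{\p D},
\end{equation*}
where $\mathbf{c}_i^{(2)} := \int_{\p D}(y_1,y_2)\psi_i^{0}(y)\,\dx\sigma(y)$. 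Summing over $i=1,2$ and using $\psi_1^{0} + \psi_2^{0} = 0$ (which also forces $\mathbf{c}_1^{(2)}+\mathbf{c}_2^{(2)} = 0$), the left-hand side vanishes and we obtain $1 + \frac{\iu}{2w_3L^2}\int_{\p D}(\hat{\psi}_1^{1,\alpha_0} + \hat{\psi}_2^{1,\alpha_0})\,\dx\sigma = 0$, which rearranges to the claimed identity.

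\textbf{Main obstacle.} The delicate step is the order-$\omega^0$ matching for the second equation: one has to track how the $\omega^{-1}$ singular term in \eqref{eq:Shat}, when paired with the $O(\omega)$ correction $\omega\hat{\psi}_i^{1,\alpha_0}$ in the expansion \eqref{eq:psi0}, contributes a finite amount $\frac{\iu}{2w_3L^2}\int_{\p D}\hat{\psi}_i^{1,\alpha_0}\,\dx\sigma$ to the $O(1)$ balance. Confirming that all other potential $O(1)$ terms vanish under the summation $i=1,2$ by virtue of the antisymmetry $\psi_1^{0} = -\psi_2^{0}$ is the crucial bookkeeping that isolates the desired constant $2\iu w_3 L^2$.
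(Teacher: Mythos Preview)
Your proof is correct. For the second equation you essentially follow the paper's route: both arguments sum the order-$\omega^0$ identity \eqref{eq:leadingorder} (which you rederive explicitly, while the paper just cites it from the proof of \Cref{lem:cap0}) over $i=1,2$ and use $\psi_1^0=-\psi_2^0$ to isolate the constant.

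For the first equation your approach differs from the paper's. You argue directly: since $(\hat{\S}_D^{\omega\alpha_0,\omega})^{-1}$ is holomorphic by \Cref{lem:holo}, $\varphi_\omega$ is bounded, and then the $\omega^{-1}$ pole in \eqref{eq:Shat} forces its mean to be $O(\omega)$. The paper instead uses duality, writing
\[
\int_{\p D} \left(\hat\S_D^{\omega\alpha_0,\omega}\right)^{-1}\S_1^{\alpha_0} [\psi_j^{0}]\dx\sigma = \int_{\p D} \S_1^{\alpha_0} [\psi_j^{0}] \left(\hat\S_D^{-\omega\alpha_0,\omega}\right)^{-1}[\Chi_{\D}] \dx \sigma,
\]
and then noting that $(\hat\S_D^{-\omega\alpha_0,\omega})^{-1}[\Chi_{\p D}]=\psi_1^{0}+\psi_2^{0}+O(\omega)=O(\omega)$. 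The duality route is slicker and avoids reinvoking \Cref{lem:holo}, but your singularity-matching argument is more self-contained and makes transparent exactly which structural feature of $\hat{\S}_D^{\omega\alpha_0,\omega}$ is responsible for the vanishing mean. Both are equally valid.
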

\begin{proof}
	Using the fact that the $L^2(\p D)$-dual of $\hat{\S}_D^{\alpha,k}$ is $\overline{\hat{\S}_D^{-\alpha,k}}$, we have
	$$\int_{\p D} \left(\hat\S_D^{\omega\alpha_0,\omega}\right)^{-1}\S_1^{\alpha_0} [\psi_j^{0}] = \int_{\p D} \S_1^{\alpha_0} [\psi_j^{0}] \left(\hat\S_D^{-\omega\alpha_0,\omega}\right)^{-1}[\Chi_{\D}] \dx \sigma = O(\omega).$$
Moreover, since $\psi_1^{0} = -\psi_2^{0}$ we have
$$
\int_{\p D}{\psi}_1^{1,\alpha_0} + {\psi}_2^{1,\alpha_0} \dx \sigma = \int_{\p D}\hat{\psi}_1^{1,\alpha_0} + \hat{\psi}_2^{1,\alpha_0} \dx \sigma =2\iu w_3 L^2,
$$
where the last step follows from \eqref{eq:leadingorder} together with \eqref{eq:psi1,2}. This proves the claim.
\end{proof}

\begin{remark}
	It is straightforward to generalise \Cref{lem:cap0} to a general number $N$ of resonators inside the unit cell. The weighted periodic capacitance matrix $C^{v,0}$ will always have one vanishing eigenvalue. This corresponds to the well-known fact that the first band function $\omega_1^\alpha$ satisfies $\omega_1^0 = 0$ corresponding to monopole modes $\v_1^0 =  (1, ..., 1)^\mathrm{T} \in \R^N$. The other eigenvalues of $C^{v,0}$ describe the values of the other band functions $\omega_2^\alpha, \omega_3^\alpha, ..., \omega_N^\alpha$ around $\alpha = 0$.
\end{remark}
	\subsection{Plane wave scattering problem} \label{sec:scatter_asymp}
	We assume that the incident field $u^{\mathrm{in}}$ is a plane wave with frequency $\omega \in \R$ and wave vector $\k = \begin{pmatrix}k_1 & k_2 &k_3\end{pmatrix}^{\mathrm{T}}$. Again, superscript $\mathrm{T}$ denotes the transpose operator. In other words,
	$$u^{\mathrm{in}}(x) =
	e^{\iu\k\cdot x}, \qquad |\k| = k = \frac{\omega}{v},$$
	where $|\k|$ denotes the Euclidean norm of $\k$. For simplicity, we assume that the units are chosen such that $v=1$. We will consider the subwavelength regime, \ie{} when $\delta \rightarrow 0$ and $\omega = O(\delta^{1/2})$. In this limit, we assume that the incident direction $\w$ of $\k$ is fixed, \ie{} that $\k$ scales as 
	$$\k = \omega\w, \quad \w = \begin{pmatrix}
	w_1 \\ w_2 \\ s w_3
	\end{pmatrix}, \quad w_3 > 0, \quad  s = \pm 1,$$
	where $\w$ is independent of $\omega$. We define
	$$\alpha = \begin{pmatrix}k_1 \\ k_2 \end{pmatrix} = \omega \alpha_0 \in Y^*.$$
	We define the functions $S_j^{\alpha,\omega}$, for $j=1,2$, as	
	$$S_j^{\alpha,\omega}(x) = \begin{cases}
	\S_{D}^{\alpha,k}[\psi_j^{0} + \omega \psi_j^{1,\alpha_0}](x), & x\in \R^3 \setminus \overline\C,\\
	\S_{D}^{\alpha,k_i}[\psi_j^{0} + \omega \psi_j^{1,\alpha_0}](x), & x\in \C_i, \ i=1,2.
	\end{cases}
	$$
	\begin{prop} \label{lem:modal_quasi}
		Let $\lambda_2^{0}, \v_2^{0}$ be the second eigenpair of $C^{v,{0}}$, and let $\lambda = \omega^2|D_1|.$ Assume that $\Im(d^\mathrm{T} C^{v,1,\alpha_0} \v_2^{0}) \neq 0$, where $d =\left( \begin{smallmatrix}1 \\ -1
		\end{smallmatrix}\right)$. Then, for $\omega \in \R$ in the subwavelength regime such that $\lambda = \lambda_2^{0} + \lambda^*$, where $\lambda^* = O(\omega^3)$, the solution to the scattering problem \eqref{eq:scattering_quasi} can be written as 
		$$u - u^{\mathrm{in}}=-(a+\iu b)\mu S_1^{\alpha,\omega} + (a-\iu b)\mu S_2^{\alpha,\omega} - \S_D^{\alpha,k}\left(\S_D^{\alpha,k}\right)^{-1}[u^{\mathrm{in}}] + O(\omega^2),$$
		where $\mu$ is given by 
		$$\mu = \frac{ d^\mathrm{T}p } {d^\mathrm{T} \left(\omega C^{v,1,\alpha_0} - \lambda^*I\right) \v_2^{0}} + O(\omega), \qquad p= -\begin{pmatrix} {v_1^2\delta_1}\int_{\D_1}\left({\S}_D^{\alpha,k}\right)^{-1}[u^{\mathrm{in}}]\de\sigma \\[0.3em]
		{v_2^2\delta_2}\int_{\D_2}\left({\S}_D^{\alpha,k}\right)^{-1}[u^{\mathrm{in}}]\de\sigma \end{pmatrix}.$$
		Here, the error terms are uniform with respect to $\lambda^*$ in a neighbourhood of $0$.
	\end{prop}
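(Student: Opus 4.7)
The plan is to adapt the layer-potential argument of \Cref{lem:modal} to the quasiperiodic setting and then carefully resolve the near-singularity of the leading-order weighted capacitance matrix $C^{v,0}$ at $\lambda \approx \lambda_2^{0}$. I would begin with the layer potential representation
\begin{equation*}
u = \begin{cases} u^{\mathrm{in}} + \S_D^{\alpha,k}[\psi], & x \in \R^3 \setminus \overline{\C}, \\ \S_D^{\alpha,k_i}[\phi], & x \in \C_i, \end{cases}
\end{equation*}
for densities $\phi, \psi \in L^2(\p D)$ to be fixed by the transmission conditions. Using the jump relations \eqref{eq:jump1_quasi}--\eqref{eq:jump2_quasi} and the expansion \eqref{eq:exp_S_quasi}, which implies $\S_D^{\alpha,k_i} - \S_D^{\alpha,k} = O(\omega^2)$, the continuity condition yields $\psi = \phi - (\S_D^{\alpha,k})^{-1}[u^{\mathrm{in}}] + O(\omega^2)$. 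Substituting into the Neumann-type condition, integrating over $\p D_i$ for $i = 1,2$, and applying the refined expansions \eqref{eq:exp_hat} together with \Cref{lem:intK}, the system reduces to the matrix equation
\begin{equation*}
\bigl(C^{v,0} + \omega C^{v,1,\alpha_0} - \lambda I\bigr)\begin{pmatrix} q_1 \\ q_2 \end{pmatrix} = p + O(\omega^2),
\end{equation*}
where $\phi = q_1(\psi_1^{0} + \omega \psi_1^{1,\alpha_0}) + q_2(\psi_2^{0} + \omega \psi_2^{1,\alpha_0}) + O(\omega^2)$ by \Cref{lem:psi_err}.

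The main difficulty is that $C^{v,0}$ is singular: by \Cref{lem:cap0} its eigenvalues are $0$ and $\lambda_2^{0}$, so $(C^{v,0} - \lambda I)$ cannot be inverted directly when $\lambda$ lies near $\lambda_2^{0}$. To handle this, I would project the matrix equation onto the left eigenvector $d = (1,-1)^{\mathrm{T}}$, which by a direct calculation from \Cref{lem:cap0} satisfies $d^{\mathrm{T}} C^{v,0} = \lambda_2^{0} d^{\mathrm{T}}$ and $d^{\mathrm{T}} \v_1^{0} = 0$. With $\lambda = \lambda_2^{0} + \lambda^*$, this projection kills the leading contribution and leaves
\begin{equation*}
d^{\mathrm{T}} \bigl(\omega C^{v,1,\alpha_0} - \lambda^* I\bigr) q = d^{\mathrm{T}} p + O(\omega^2).
\end{equation*}
Decomposing $q = \mu \v_2^{0} + c_1 \v_1^{0} + O(\omega)$ and performing a complementary projection onto the left eigenvector of $C^{v,0}$ associated with the eigenvalue $0$, one checks that $c_1 = O(\omega^2)$ (since $p = O(\omega^2)$ from $V = O(\delta) = O(\omega^2)$ and $\lambda_2^{0}$ is of order one), so its contribution is absorbed into the remainder. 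Solving the resulting scalar equation for $\mu$ gives the stated formula; the hypothesis $\Im(d^{\mathrm{T}} C^{v,1,\alpha_0} \v_2^{0}) \neq 0$ ensures that the denominator has size $\Theta(\omega)$ uniformly for $\lambda^* = O(\omega^3)$, which yields the error $O(\omega)$ in $\mu$ and the uniformity of all remainders.

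The main obstacle is the bookkeeping needed to track errors through operators exhibiting an $\omega^{-1}$ singularity from the quasiperiodic Green's function: at each step one must verify that applying $(\S_D^{\alpha,k})^{-1}$ or the quasiperiodic Neumann--Poincar\'e operator to an $O(\omega^2)$ quantity still produces an $O(\omega^2)$ quantity, which relies on the boundedness of $(\hat\S_D^{\omega\alpha_0,\omega})^{-1}$ from \Cref{lem:holo} and the refined asymptotics developed in \Cref{sec:C0}. Once $\mu$ is isolated, substituting $q = (-(a+\iu b)\mu,\ (a-\iu b)\mu)^{\mathrm{T}} + O(\omega^2)$ back into the layer-potential ansatz and recognising $\S_D^{\alpha,k_i}[\psi_j^{0} + \omega \psi_j^{1,\alpha_0}] = S_j^{\alpha,\omega}$ reconstructs $u - u^{\mathrm{in}}$ in the claimed form.
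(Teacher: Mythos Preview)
Your overall strategy---layer-potential representation, reduction to a $2\times 2$ matrix problem, then projection with the left eigenvector $d$---matches the paper. But the proposal contains scaling errors that would prevent the argument from closing.

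The central misstep is the assertion that ``$\lambda_2^{0}$ is of order one''. Recall $v_i^2\delta_i = a\pm\iu b$ with $\delta_i = O(\delta)$ and $v_i = O(1)$, so $a,b = O(\delta) = O(\omega^2)$; hence $\lambda_2^{0} = 2aC_{11}^{0} = O(\omega^2)$, and likewise every entry of $C^{v,0}$ and of $\omega C^{v,1,\alpha_0} = \omega V C^{1,\alpha_0}$ is $O(\omega^2)$ or $O(\omega^3)$. Consequently your matrix equation $(C^{v,0} + \omega C^{v,1,\alpha_0} - \lambda I)q = p + O(\omega^2)$ is useless: the matrix itself is $O(\omega^2)$, so an $O(\omega^2)$ right-hand-side error gives no control on $q$. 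The paper tracks the remainder to $O(\delta\omega^2 + \omega^4) = O(\omega^4)$, and this precision is essential for the $d^{\mathrm T}$-projection step, where both $d^{\mathrm T}(\omega C^{v,1,\alpha_0}-\lambda^*I)\v_2^{0}$ and $d^{\mathrm T}p$ are genuinely $O(\omega^3)$. Relatedly, your estimate $p = O(\omega^2)$ misses a factor of $\omega$: by duality $\int_{\D_i}(\S_D^{\alpha,k})^{-1}[u^{\mathrm{in}}]\,\de\sigma = O(\omega)$ (the leading part of $(\S_D^{-\alpha,k})^{-1}[\Chi_{\D_i}]$ has mean zero), so $p = O(\omega^3)$.

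A secondary point: the paper represents $u$ inside the resonators with the \emph{free-space} potential $\S_D^{k_i}$ and expands $\phi$ in the free-space basis $\psi_1,\psi_2$, applying \Cref{lem:ints}. Only the exterior density $\psi$ is written in terms of $\psi_j^{0}+\omega\psi_j^{1,\alpha_0}$, via \Cref{lem:psi_err}. Your choice of $\S_D^{\alpha,k_i}$ inside, combined with the citation of \eqref{eq:exp_S_quasi} (valid only for $|\alpha|$ bounded away from $0$), would need separate justification in the regime $\alpha = \omega\alpha_0\to 0$, where the quasiperiodic Green's function carries an $\omega^{-1}$ singularity.
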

\begin{proof}
The solutions to \eqref{eq:scattering_quasi} can be represented as
\begin{equation} \label{eq:rep_quasi}
u = \begin{cases}
u^{\mathrm{in}}(x)+\S_{D}^{\alpha,k}[\psi](x), & x\in \R^3 \setminus \overline{\C},\\
\S_{D}^{k_i}[\phi](x), & x\in \C_i, \ i=1,2,
\end{cases}
\end{equation} 
for some surface densities $(\phi,\psi)\in L^2(\D)\times L^2(\D)$, which must be chosen so that $u$ satisfies the transmission conditions across $\D$.
Using the jump conditions \eqref{eq:jump1_quasi} and \eqref{eq:jump2_quasi}, we see that this implies that the layer densities $\phi$ and $\psi$ satisfies
\begin{align}
\S_{D}^{k_i}[\phi]-\S_{D}^{\alpha,k}[\psi]=u^{\mathrm{in}} \quad \text{on}\ \D_i, \label{eq:quasi_full1} \\
\left(-\frac{1}{2}I+\K_{D}^{k_i,*}\right)[\phi]-\delta_i\left(\frac{1}{2}I+(\K_{D}^{-\alpha,k})^*\right)[\psi]=\delta_i \ddp{u^{\mathrm{in}}}{\nu} \quad \text{on} \ \D_i, \label{eq:quasi_full2}
\end{align}
for $i=1,2$.
Using the asymptotic expansions \eqref{eq:exp_hat} and \eqref{eq:exp_K} we have from \eqref{eq:quasi_full2} that, on $\D_i$,
\begin{gather} 
\left(-\frac{1}{2}I+\K_D^*+\frac{\omega^2}{v_i^2}\K_{D,2} +\frac{\omega^ 3}{v_i^3}\K_{D,3}\right)[\phi]-\delta_i\left(\frac{1}{2}I+(\hat\K_D^{-\alpha,k})^* + \omega(\K_{D,1}^{-\alpha,k})^*\right)[\psi]=O(\delta\omega^2+\omega^4). \label{eq:quasi2}
%\end{split}
\end{gather}
Integrating over $\D_i$, using \Cref{lem:ints,lem:intK} along with \eqref{eq:properties}, gives us that
\begin{equation} \label{eq:int}
-\frac{\omega^2}{v_i^2}\int_{D_i}\S_D[\phi]\de x + \frac{\omega^3}{v_i^3}\frac{\iu|D_i|}{4\pi}\int_{\D}\phi\de\sigma - \delta_i\int_{\D_i}\psi\de\sigma - \delta_i\frac{\iu\omega|D_1|}{2w_3L^2} \int_{\D}\psi\de\sigma= O(\delta\omega^2+\omega^4).
\end{equation}

At leading order, \eqref{eq:quasi2} says that $\left(-\frac{1}{2}I+\K_D^{*}\right)[\phi]=0$ so, in light of the fact that $\psi_1$ and $\psi_2$ form a basis for $\ker\left(-\frac{1}{2}I+\K_D^{*}\right)$, $\phi$ can be written as
\begin{equation} \label{eq:psi_basis_quasi}
\phi=q_1\psi_1+q_2\psi_2+O(\omega^2+\delta),
\end{equation}
for constants $q_1,q_2=O(1)$. Using \eqref{eq:exp_S}, we can expand $\S_D^{k_i}[\psi_j]$ as 
$$\S_D^{k_i}[\psi_j] = \Chi_{\D_i} - \frac{\omega\mathrm{Cap}_D}{8\pi \iu v_i}\Chi_{\D} + O(\omega^2),$$
where $\mathrm{Cap}_D = 2(C_{11} + C_{12})$. From \eqref{eq:quasi_full1}, we then find that
$$\S_D^{\alpha,k}[\psi] = \Chi_{\D_1}\left( q_1 - (q_1+q_2) \frac{\omega\mathrm{Cap}_D}{8\pi \iu v_1}\right) + \Chi_{\D_2}\left( q_2 - (q_1+q_2) \frac{\omega\mathrm{Cap}_D}{8\pi \iu v_2}\right)+O(\omega^2+\delta),$$
and then from \Cref{lem:psi_err} that 
\begin{multline} \label{eq:psi_basis_exp}
\psi = q_1\left(\psi_1^{0}+ \omega\psi_1^{1,\alpha_0} -  \frac{\omega\mathrm{Cap}_D}{8\pi \iu }\left(\frac{\psi_1^{0}}{v_1} + \frac{\psi_2^{0}}{v_2} \right)\right) \\
+ q_2 \left(\psi_2^{0}+ \omega\psi_2^{1,\alpha_0} - \frac{\mathrm{\omega Cap}_D}{8\pi \iu }\left(\frac{\psi_1^{0}}{v_1} + \frac{\psi_2^{0}}{v_2}\right) \right) -\left({\S}_D^{\alpha,k}\right)^{-1}[u^{\mathrm{in}}] + O(\omega^2 + \delta).
\end{multline}
Substituting \eqref{eq:psi_basis_quasi} and \eqref{eq:psi_basis_exp} into \eqref{eq:int}, and using the fact that $\int_{\D}\left({\S}_D^{\alpha,k}\right)^{-1}[u^{\mathrm{in}}]\de\sigma = O(\omega)$, we reach the problem 
\begin{equation}\label{eq:eval_C_quasi}
\left({C}^{v,0}-\lambda I + E \right)\begin{pmatrix}q_1\\q_2\end{pmatrix}
= -\begin{pmatrix} {v_1^2\delta_1}\int_{\D_1}\left({\S}_D^{\alpha,k}\right)^{-1}[u^{\mathrm{in}}]\de\sigma \\[0.3em]
{v_2^2\delta_2}\int_{\D_2}\left({\S}_D^{\alpha,k}\right)^{-1}[u^{\mathrm{in}}]\de\sigma \end{pmatrix}+O(\delta \omega^2+\omega^4),
\end{equation}
where $\lambda = \omega^2|D_1|$, while $E= (E_{i,j}) = O(\delta\omega + \omega^3), \ i,j = 1,2$ is the matrix given by
$$E_{i,j} = \frac{\omega\mathrm{Cap}_D}{8\pi\iu}\left( \frac{|D_1|\omega^2}{v_i} - \frac{C^{v,0}_{i1}}{v_1} - \frac{C^{v,0}_{i2}}{v_2} \right)- v_i^2\delta_i\omega\int_{\D_i} \psi_j^{1,\alpha_0} \dx \sigma.$$
We write $q = \left(\begin{smallmatrix} q_1 \\ q_2 \end{smallmatrix}\right)$ and denote the right-hand side of \eqref{eq:eval_C_quasi} by $p$. Recall that we are working in the subwavelength regime $\omega = O(\delta^{1/2})$. Assuming $\lambda = \lambda_2^{0} + \lambda^*$, where $\lambda^* = O(\omega^3)$, we can rewrite \eqref{eq:eval_C_quasi} into
\begin{equation}\label{eq:eval_C_quasi_short}
\left({C}^{v,0}-\lambda_2^{0} I + E - \lambda^*I \right)q
= p.
\end{equation}
Using the second eigenvector $\v_2^{0}$ of ${C}^{v,0}$, we can find a constant $\mu$ such that
$$q = \mu \v_2^{0} + q_0,$$
for some $q_0$ satisfying $\v_2^{0} \cdot q_0 = 0$.

Next, we compute $p$. Since $u^{\mathrm{in}}$ is a plane wave with wave vector $\k = \omega\w$, we have
$$ u^{\mathrm{in}}(x) = 1 + \omega \iu \w\cdot x + O(\omega^2), \quad x \in \p D.$$
Using duality, we have that 
%\begin{align} \label{eq:q}
$$\int_{\D_i}\left({\S}_D^{\alpha,k}\right)^{-1}[u^{\mathrm{in}}]\de\sigma = \int_{\D}u^{\mathrm{in}}\left({\S}_D^{-\alpha,k}\right)^{-1}[\Chi_{\p D_i}]\de\sigma \nonumber = O(\omega).$$ We conclude that $p=O(\omega^3)$. Therefore \eqref{eq:eval_C_quasi_short} shows that $q_0=O(\omega)$. We are now able to compute $\mu$. Letting $d=\left(\begin{smallmatrix} 1 \\ -1\end{smallmatrix}\right) $, it is straightforward to compute 
$$\mu d^\mathrm{T} (E-\lambda^*I) \v_2^{0}  = d^\mathrm{T} p + O(\omega^4).$$
 We can simplify
$$E\v_2^{0} = - \frac{\omega b\mathrm{Cap}_DC_{11}^{0}}{4\pi}\left(\frac{a-\iu b}{v_1} + \frac{a+\iu b}{v_2} \right)\begin{pmatrix} 1 \\  1\end{pmatrix} + \omega C^{v,1,\alpha_0}\v_2^{0}.$$
Then 
$$d^\mathrm{T}(E-\lambda^*I) \v_2^{0} = d^\mathrm{T} \left(\omega C^{1,v,\alpha_0} - \lambda^*I\right) \v_2^{0}.$$
From the assumption $\Im(d^\mathrm{T}C^{1,v,\alpha_0}\v_2^0) \neq 0$, and since $\lambda^*$ is real, we find that $\left|d^\mathrm{T} \left(\omega C^{1,v,\alpha_0} - \lambda^*I\right) \v_2^{0}\right|> \omega^3K > 0$ for some constant $K$, for all $\lambda^*$ in a neighbourhood of $0$. We then have 
$$
\mu = \frac{d^\mathrm{T} p} {d^\mathrm{T} \left(\omega C^{v,1,\alpha_0} - \lambda^*I\right) \v_2^{0}} + O(\omega),$$
uniformly for $\lambda^*$ in a neighbourhood of $0$. Then, combining \eqref{eq:rep_quasi} and \eqref{eq:psi_basis_exp}, we find that for $x\in \R^3$,
\begin{align*}
u(x) - u^{\mathrm{in}}(x) &= -\mu (a + \iu b)S_1^{\alpha,\omega}(x) + \mu (a-\iu b)S_2^{\alpha,\omega}(x) - \S_D^{\alpha,k}\left(\S_D^{\alpha,k}\right)^{-1}[u^{\mathrm{in}}](x) +  O(\omega^2),
\end{align*}
(we emphasise that there is no cancellation in the last term for $x \notin \D$). This proves the claim. 
\end{proof}

\begin{remark} \label{rmk:first}
	If $\omega$ is instead close to the first resonant frequency $\omega_1^0 = 0$, the solution $q$ to \eqref{eq:eval_C_quasi_short} will be approximated by the first eigenvector $\v_1^{0}$. Consequently, it can be shown that $\psi$ vanishes to high order. In other words, the incoming wave is largely unaffected by the metascreen and the scattered field is small.
\end{remark}
	\subsection{Unidirectional reflection and extraordinary transmission}
	In this section, we prove that there is a frequency such that the metascreen's reflection coefficient is {asymptotically close to zero} when the incident wave is from one side and non-zero when the incident wave is from the other side of the screen. We will also demonstrate the occurrence of extraordinary transmission. The main results are stated in \Cref{thm:unidir}.
	
	We begin by studying the radiative behaviour of the basis functions $S_1^{\alpha,\omega}$ and $S_2^{\alpha,\omega}$, in terms of which the scattered field is expressed. The quasiperiodic radiation condition implies that the single layer potential behaves as a superposition of outgoing plane waves as $|x_3| \rightarrow \infty$. Throughout this section, we will use $\sim$ to denote equality up to exponentially decaying factors, \ie{} for functions $f,g\in C(\R)$ we have $f(x) \sim g(x), x\rightarrow \infty$ if and only if
	$$|f(x) - g(x)| = O(e^{-Kx}) \text{ as } x\rightarrow \infty,$$
	for some constant $K >0$. The following result describes the radiative behaviour of the single layer potential in the case of a single propagating mode, and is a direct consequence of the expansion of the Green's function in \eqref{eq:specrep}.
	\begin{prop} \label{prop:radiation}
		Assume that $|\alpha| < k < \inf_{q\in \Lambda^*\setminus \{0\} }|\alpha+q|$. Then, as $|x_3|\rightarrow \infty$, the quasiperiodic single layer potential satisfies 
		$$
		\S_D^{\alpha,k}[\phi] \sim \begin{cases}\ds \frac{e^{\iu\k_+\cdot x}}{2\iu k_3L^2}\int_{\D} e^{-\iu\k_+\cdot y}\phi(y)\dx \sigma(y), \quad &x_3\rightarrow \infty, \\[2em]
		\ds \frac{e^{\iu\k_-\cdot x}}{2\iu k_3L^2}\int_{\D} e^{-\iu\k_-\cdot y}\phi(y)\dx \sigma(y), \quad &x_3\rightarrow -\infty. 
		\end{cases}
		$$
		Here, $k_3 = \sqrt{k^2 - |\alpha|^2}$ while $\k_+ = (\alpha,k_3)^\mathrm{T}$ and $\k_- = (\alpha,-k_3)^\mathrm{T}.$
	\end{prop}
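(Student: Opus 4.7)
The plan is to substitute the spectral representation \eqref{eq:specrep} of the quasiperiodic Green's function directly into the definition of $\S_D^{\alpha,k}[\phi]$ and exploit the hypothesis $|\alpha| < k < \inf_{q \in \Lambda^*\setminus\{0\}} |\alpha+q|$ to separate the one propagating Bloch mode from the evanescent ones. Under this assumption $k_3 = \sqrt{k^2 - |\alpha|^2}$ is real and positive, while for every $q \in \Lambda^*\setminus\{0\}$ the exponent $\kappa_q := \sqrt{|\alpha+q|^2 - k^2}$ is real and bounded below by some $\kappa_* > 0$.

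Since $D \subset Y$ is bounded, there exists $R > 0$ such that $|y_3| \le R$ for all $y \in \p D$. For $|x_3| > R$ we then have $|x_3 - y_3| = \mathrm{sgn}(x_3)(x_3 - y_3)$ uniformly in $y \in \p D$, so the $q=0$ term in \eqref{eq:specrep} evaluated at $x - y$ equals
\begin{equation*}
\frac{e^{\iu \alpha \cdot (x_1 - y_1, x_2 - y_2)} e^{\iu k_3 \mathrm{sgn}(x_3)(x_3 - y_3)}}{2\iu k_3 L^2} = \frac{e^{\iu \k_\pm \cdot x}\, e^{-\iu \k_\pm \cdot y}}{2\iu k_3 L^2},
\end{equation*}
according as $x_3 \to \pm \infty$, with $\k_\pm = (\alpha, \pm k_3)^{\mathrm T}$. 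Integrating against $\phi \in L^2(\p D)$ produces exactly the right-hand side of the claimed asymptotic.

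It remains to show that the contribution of the $q \neq 0$ modes decays exponentially. Each such term, integrated against $\phi$, is bounded in absolute value by
\begin{equation*}
\frac{1}{2 L^2 \kappa_q}\int_{\p D} e^{-\kappa_q |x_3 - y_3|} |\phi(y)| \dx \sigma(y) \le \frac{e^{\kappa_q R}}{2 L^2 \kappa_q}\, e^{-\kappa_q |x_3|}\, \|\phi\|_{L^1(\p D)}.
\end{equation*}
Since the series \eqref{eq:specrep} converges uniformly on compact sets away from the lattice and $\kappa_q \to \infty$ as $|q| \to \infty$, summing these contributions over $q \in \Lambda^*\setminus\{0\}$ yields a remainder bounded by $C e^{-\kappa_* |x_3|}$ for $|x_3|$ large, with $\kappa_* = \min_{q \neq 0} \kappa_q > 0$. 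This is the $O(e^{-K |x_3|})$ error captured by the symbol $\sim$, and the proposition follows.

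There is no real obstacle here; the only point requiring care is the uniform control of the tail of the lattice sum, which is handled by the uniform convergence statement following \eqref{eq:specrep} together with the explicit decay rate $\kappa_*$ guaranteed by the spectral gap assumption on $k$.
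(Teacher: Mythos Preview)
Your proof is correct and follows exactly the route the paper indicates: the paper does not give a detailed argument but simply states that the proposition ``is a direct consequence of the expansion of the Green's function in \eqref{eq:specrep}'', and you have spelled out that direct consequence. One small point: the intermediate bound $\frac{e^{\kappa_q R}}{2L^2\kappa_q}e^{-\kappa_q|x_3|}$ is not termwise summable after factoring out $e^{-\kappa_*|x_3|}$ (since $\sum_{q\neq 0}\kappa_q^{-1}$ diverges in two dimensions); to get $O(e^{-K|x_3|})$ cleanly, split the exponent, e.g.\ for $|x_3|>R+1$ write $e^{-\kappa_q|x_3-y_3|}\le e^{-\kappa_*(|x_3|-R-1)}e^{-\kappa_q}$ and use the convergence of $\sum_{q\neq 0}\kappa_q^{-1}e^{-\kappa_q}$.
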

	We define the coefficients 
	$$R_{j,\pm} = \frac{1}{2 \iu k_3L^2}\int_{\D} e^{-\iu \k_\pm\cdot y}\left(\psi_j^{0}(y) + \omega\psi_j^{1,\alpha_0}(y)\right)\dx \sigma(y), \quad j = 1,2.$$
	By \Cref{prop:radiation}, the basis functions $S_1^{\omega, \alpha}, S_2^{\omega, \alpha}$ for the scattered field satisfies the radiative behaviour 
	\begin{equation}\label{eq_R}
	S_j^{\alpha, \omega} \sim R_{j,\pm}e^{\iu\k_\pm \cdot x}, \quad x_3 \rightarrow \pm \infty.
	\end{equation}

	\subsubsection{Scattering matrix and unidirectional reflectionless transmission}
	Recall that we are considering the limit when $\delta\rightarrow 0$ and supposing that $\omega = O(\sqrt{\delta})$. The condition  $|\alpha| < k < \inf_{l\in \mathbb{Z}^2\setminus \{0\} }|2\pi l L-\alpha|$ will be satisfied for small enough $\omega$, so the scattered wave will behave as a single plane wave as $|x_3| \rightarrow \infty$.
	If the incident field is given by
	$$u^{\mathrm{in}}(x) = c_1 e^{\iu \k_-\cdot x} + c_2 e^{\iu\k_+\cdot x},$$
	the total field will behave as 
	\begin{equation}\label{eq:utot}
	u \sim \begin{cases}c_1 e^{\iu\k_-\cdot x} + d_1 e^{\iu\k_+\cdot x}, & x_3\rightarrow \infty, \\ c_2 e^{\iu\k_+\cdot x} + d_2 e^{\iu\k_-\cdot x}, & x_3\rightarrow -\infty,\end{cases}
	\end{equation}
	where 
	\begin{equation} \label{eq:S_def}
	\begin{pmatrix} d_1 \\ d_2 \end{pmatrix} =  S\begin{pmatrix} c_1 \\ c_2 \end{pmatrix}, \qquad S = \begin{pmatrix} r_+ & t_- \\ t_+ & r_- \end{pmatrix}.
	\end{equation}
	$S$ is known as the scattering matrix.
	The reflection and transmission coefficients $r_+, t_+$ are the coefficients of the outgoing part of the field in the case $u^{\mathrm{in}}(x) =  e^{\iu\k_-\cdot x}$, \ie{} when the incident field is a plane wave from the positive $x_3$ direction (and reversely for $r_-, t_-$). Next, we will compute the scattering matrix in the asymptotic limit specified in \Cref{sec:scatter_asymp}.
	
	For simplicity, we set $u^{\mathrm{in}} = e^{\iu \k\cdot x}$ with $\k=\k_+$ or $\k=\k_-$, and then use linearity to obtain the full scattering matrix. From \Cref{lem:modal_quasi}, we know that the scattered field is given by
	\begin{equation}\label{eq:scattered}
	u - u^{\mathrm{in}} = -(a+\iu b)\mu S_1^{\alpha,\omega} + (a-\iu b)\mu S_2^{\alpha,\omega} - \S_D^{\alpha,k}\left(\S_D^{\alpha,k}\right)^{-1}[u^{\mathrm{in}}] + O(\omega^2).
	\end{equation}
	As $\omega \rightarrow 0$, we have the following asymptotic behaviour of
	\begin{align*}
	R_{j,\pm} &= \frac{1}{2\iu k_3L^2}\int_{\D}\psi_j^{0}(y)\dx \sigma(y) - \frac{1}{2 \iu k_3L^2}\int_{\D} \iu \k_\pm\cdot y\psi_j^{0}(y)\dx \sigma(y) + \frac{\omega}{2 \iu k_3L^2}\int_{\D} \psi_j^{1,\alpha_0}\dx \sigma +  O(\omega) \\
	&= - \frac{\k_\pm \cdot \mathbf{c}_j}{2 k_3L^2} + \frac{1}{2 \iu w_3L^2}\int_{\D} \psi_j^{1,\alpha_0}\dx \sigma  + O(\omega).
	\end{align*}
	Moreover,
	\begin{align*}
	\frac{1}{2\iu k_3L^2}\int_{\D} e^{-\iu\k_\pm\cdot y}\left(\S_D^{\alpha,\omega}\right)^{-1}[u^{\mathrm{in}}]\dx \sigma(y) = \frac{\omega}{2\iu k_3L^2}\int_{\D}\left( \psi_1^{1,\alpha_0} + \psi_2^{1,\alpha_0}\right) \dx \sigma + O(\omega) = 1 + O(\omega).
	\end{align*}
	Therefore, from \Cref{prop:radiation}, \eqref{eq_R} and \eqref{eq:scattered}, the scattered field satisfies
	\begin{align} \label{eq:uscattered}
	u-u^{\mathrm{in}} &\sim \left(\frac{\mu a \k_\pm \cdot \mathbf{c}_1}{ k_3L^2} - \iu \mu b - 1 + O(\omega)\right)e^{\iu\k_\pm \cdot x}, \\
	&=:\big(G_ {s\pm}(\lambda^*) + O(\omega)\big)e^{\iu\k_\pm \cdot x},
	\end{align}
	as $x_3\rightarrow \pm \infty.$ Here $s$ denotes the sign of the third component of $\k$ (recall that $\mu$ depends on $\k$). From \eqref{eq:utot} and \eqref{eq:uscattered}, it follows that the scattering matrix, defined in \eqref{eq:S_def}, can be written as
	\begin{equation}\label{eq:S}
	S =  \begin{pmatrix} G_ {-+}(\lambda^*) & 1+ G_ {++}(\lambda^*)\\ 1+ G_ {--}(\lambda^*)  & G_ {+-}(\lambda^*)\end{pmatrix} + O(\omega).
	\end{equation}
	Up to this point, the only assumption we have made on the resonators' geometry is that they are symmetric under the parity operator $\P$. In order to simplify the above expressions, we will additionally assume that the resonators have an in-plane parity symmetry $\P_2$, \ie{} that
	$$\P_2D_i = D_i, \ i=1,2, \qquad \text{where} \qquad \P_2(x_1,x_2,x_3) = (-x_1,-x_2,x_3).$$
	We then have the following result on the capacitance coefficients. 
	\begin{lemma} \label{lem:P2sym}
		Assume that $\P_2D_i=D_i, \ i=1,2$.
		\begin{itemize}
			\item[(i)] $\psi_j^0$, and consequently $C_{ij}^0$ and $\mathbf{c}_j$, are independent of $\alpha_0$.
			\item[(ii)] For some $c\in \R$ we have
			$$ \mathbf{c}_1 = \left(\begin{smallmatrix}0 \\ 0 \\ c\end{smallmatrix}\right), \quad \mathbf{c}_2 = \left(\begin{smallmatrix}0 \\ 0 \\ -c\end{smallmatrix}\right),$$
		and
		$$C^{1,\alpha_0} = -\frac{\iu w_3 L^2}{2} \begin{pmatrix} 1 & 1 \\ 1 & 1\end{pmatrix} - \frac{\iu w_3 c^2}{2L^2}\begin{pmatrix} 1 & -1 \\ -1 & 1\end{pmatrix} + O(\omega).$$
		\end{itemize}
	\end{lemma}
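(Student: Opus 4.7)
The plan is to treat the two parts of the lemma in sequence, making repeated use of (a) the $\mathcal{P}_2$-symmetry of each resonator, (b) the $\P$-symmetry $D_1=\P D_2$ already exploited in earlier results, and (c) the identities $\psi_1^{0}=-\psi_2^{0}$ and $\int_{\p D}\psi_j^{0}\,d\sigma=0$ from Lemma~\ref{lem:cap0}. For (i), I would substitute the expansion $\psi_j^{\omega\alpha_0,\omega}=\psi_j^{0}+\omega\hat{\psi}_j^{1,\alpha_0}+O(\omega^{2})$ into the identity $\hat{\S}_D^{\omega\alpha_0,\omega}[\psi_j^{\omega\alpha_0,\omega}]=\Chi_{\p D_j}$, use the splitting \eqref{eq:Shat}, and read off the $\omega^{0}$ coefficient. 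Using $\int_{\p D}\psi_j^{0}\,d\sigma=0$ to kill the $\alpha_0$-linear term in $x$ and absorbing the remaining $\alpha_0$-dependent integrals into a single constant, this yields $\S_D^{0,0}[\psi_j^{0}]=\Chi_{\p D_j}+K_j\Chi_{\p D}$. Lemma~\ref{lem:phi=0} then shows that the pair $(K_j,\psi_j^{0})$ solving this subject to $\int_{\p D}\psi_j^{0}\,d\sigma=0$ is unique; since the equation has no $\alpha_0$ appearing in its data, both $\psi_j^{0}$ and $K_j$ (hence $C_{ij}^{0}$ and $\mathbf{c}_j$) are $\alpha_0$-independent.

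For (ii), the shape of $\mathbf{c}_j$ is immediate: by the same uniqueness argument and the $\mathcal{P}_2$-invariance of both $\S_D^{0,0}$ and $\Chi_{\p D_j}$, the function $\psi_j^{0}$ is $\mathcal{P}_2$-symmetric, so the first two components of $\int_{\p D}y\,\psi_j^{0}(y)\,d\sigma$ vanish by odd-even parity; setting $c:=(\mathbf{c}_1)_3$ and invoking $\psi_2^{0}=-\psi_1^{0}$ yields $\mathbf{c}_2=(0,0,-c)^{\mathrm{T}}$. For $C^{1,\alpha_0}$ I would split $\psi_j^{1,\alpha_0}=\hat{\psi}_j^{1,\alpha_0}-(\hat{\S}_D^{\omega\alpha_0,\omega})^{-1}\S_1^{\alpha_0}[\psi_j^{0}]$ via Lemma~\ref{lem:psi_err} and treat the two contributions to $C_{ij}^{1,\alpha_0}=-\int_{\p D_i}\psi_j^{1,\alpha_0}\,d\sigma$ separately. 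For the first, extracting the $\omega^{1}$ coefficient of $\hat{\S}_D^{\omega\alpha_0,\omega}[\psi_j^{\omega\alpha_0,\omega}]=\Chi_{\p D_j}$, the $j$-dependence enters only through constants, so the mean-zero difference $\hat{\psi}_1^{1,\alpha_0}-\hat{\psi}_2^{1,\alpha_0}$ is mapped to a constant by $\S_D^{0,0}$ and hence vanishes by Lemma~\ref{lem:phi=0}. Writing the common function $\hat{\psi}^{1,\alpha_0}$ as the sum of its $\mathcal{P}_2$-symmetric and anti-symmetric parts, the anti-symmetric part integrates to zero over each $\p D_i$ (since $\mathcal{P}_2D_i=D_i$) while the symmetric part distributes its mass evenly over $\p D_1$ and $\p D_2$ by $\P$-symmetry; together with Lemma~\ref{lem:C1} this gives $\int_{\p D_i}\hat{\psi}^{1,\alpha_0}\,d\sigma=\iu w_3L^{2}/2$ and the rank-one contribution $-\tfrac{\iu w_3L^{2}}{2}\left(\begin{smallmatrix}1&1\\1&1\end{smallmatrix}\right)$.

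The second contribution I would evaluate by duality: since $(\hat{\S}_D^{-\omega\alpha_0,\omega})^{-1}[\Chi_{\p D_i}]=\psi_i^{0}+O(\omega)$ by part~(i), one has $\int_{\p D_i}(\hat{\S}_D^{\omega\alpha_0,\omega})^{-1}\S_1^{\alpha_0}[\psi_j^{0}]\,d\sigma=\int_{\p D}\S_1^{\alpha_0}[\psi_j^{0}]\,\psi_i^{0}\,d\sigma+O(\omega)$. Substituting the explicit formula for $G_1^{\alpha_0}$, the $\alpha_0\cdot g_1$ piece vanishes by the parity relations $g_1(\mathcal{P}_2 z)=-g_1(z)$ combined with the $\mathcal{P}_2$-symmetry of $\psi_i^{0},\psi_j^{0}$. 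Expanding the square $(w_3|x_3-y_3|+\alpha_0\cdot(x-y)_{12})^{2}$, the cross term is killed by the same $\mathcal{P}_2$-symmetry, the $(\alpha_0\cdot(x-y)_{12})^{2}$ term vanishes because $(\mathbf{c}_j)_k=0$ for $k=1,2$ together with $\int_{\p D}\psi_j^{0}\,d\sigma=0$, and only $w_3^{2}(x_3-y_3)^{2}$ survives. Expanding $(x_3-y_3)^{2}$ and using $\int_{\p D}\psi_j^{0}\,d\sigma=0$ once more reduces this piece to $-2(\mathbf{c}_i)_3(\mathbf{c}_j)_3$, which assembles into the remaining matrix $-\tfrac{\iu w_3c^{2}}{2L^{2}}\left(\begin{smallmatrix}1&-1\\-1&1\end{smallmatrix}\right)$. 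The main obstacle will be precisely this last step: checking by repeated parity arguments that every candidate $\alpha_0$-dependent term in the double integral vanishes while keeping careful track of the coefficients of the lone surviving contribution; establishing $\hat{\psi}_1^{1,\alpha_0}=\hat{\psi}_2^{1,\alpha_0}$ cleanly, which is what allows the individual integrals $\int_{\p D_i}\hat{\psi}^{1,\alpha_0}$ to be computed rather than only the sum given by Lemma~\ref{lem:C1}, is the other delicate point.
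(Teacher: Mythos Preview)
Your overall strategy matches the paper's: part~(i) comes from the equation $\S_D^{0,0}[\psi_j^{0}]=\Chi_{\p D_j}+K_j\Chi_{\p D}$ together with $\int_{\p D}\psi_j^{0}\,d\sigma=0$, and part~(ii) splits $\psi_j^{1,\alpha_0}$ into the $\hat\psi_j^{1,\alpha_0}$ piece and the $\S_1^{\alpha_0}$ piece, then kills all $\alpha_0$-dependent terms in the double integral by parity. Your uniqueness argument for~(i) is actually a little cleaner than the paper's, which first computes $K_j=-\tfrac12$ explicitly via the $\P\P_2$-symmetry before concluding.

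There is, however, one genuine gap in your handling of the $\hat\psi_j^{1,\alpha_0}$ contribution. You assert that ``the mean-zero difference $\hat\psi_1^{1,\alpha_0}-\hat\psi_2^{1,\alpha_0}$ is mapped to a constant by $\S_D^{0,0}$'', but neither claim is automatic. The $\omega^1$-coefficient of $\hat\S_D^{\omega\alpha_0,\omega}[\psi_j^{\omega\alpha_0,\omega}]$ reads
\[
\S_D^{0,0}[\hat\psi_j^{1,\alpha_0}]+\frac{\alpha_0\cdot(x_1,x_2)}{2w_3L^2}\int_{\p D}\hat\psi_j^{1,\alpha_0}\,d\sigma=\text{const},
\]
so subtracting the $j=1,2$ equations gives $\S_D^{0,0}[\hat\psi_1^{1,\alpha_0}-\hat\psi_2^{1,\alpha_0}]=\text{const}$ \emph{only after} you know $\int_{\p D}\hat\psi_1^{1,\alpha_0}\,d\sigma=\int_{\p D}\hat\psi_2^{1,\alpha_0}\,d\sigma$. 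Likewise, once $\hat\psi_1^{1,\alpha_0}=\hat\psi_2^{1,\alpha_0}=:\hat\psi^{1,\alpha_0}$ is established, your claim that the $\P_2$-symmetric part of $\hat\psi^{1,\alpha_0}$ ``distributes its mass evenly over $\p D_1$ and $\p D_2$ by $\P$-symmetry'' requires $\hat\psi^{1,\alpha_0}_{\text{sym}}\circ\P=\hat\psi^{1,\alpha_0}_{\text{sym}}$, which is not a consequence of $\P_2$-symmetry alone.

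Both issues are resolved by the relation the paper records as \eqref{eq:psisym}, namely $\hat\psi_1^{1,\alpha_0}(y)=\hat\psi_2^{1,\alpha_0}(\P\P_2 y)$. This follows because $\hat\S_D^{\omega\alpha_0,\omega}$ commutes with the reflection $\P\P_2:(x_1,x_2,x_3)\mapsto(x_1,x_2,-x_3)$, which interchanges $\Chi_{\p D_1}$ and $\Chi_{\p D_2}$. Integrating gives $\int_{\p D}\hat\psi_1^{1,\alpha_0}=\int_{\p D}\hat\psi_2^{1,\alpha_0}$ (closing the first gap), and combining with $\hat\psi_1^{1,\alpha_0}=\hat\psi_2^{1,\alpha_0}$ gives $\hat\psi^{1,\alpha_0}\circ\P\P_2=\hat\psi^{1,\alpha_0}$, from which $\int_{\p D_1}\hat\psi^{1,\alpha_0}=\int_{\p D_2}\hat\psi^{1,\alpha_0}$ follows directly (closing the second). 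With this single missing symmetry observation, the rest of your outline goes through as written.
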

	\begin{proof}[Proof of (i)]
		Using the symmetries described by $\P$ and by $\P_2$, and using the fact that $\psi_1^0 = -\psi_2^0$ we have
		\begin{equation}\label{eq:psisym}
		\psi_j^{0}(y) = -\psi_j^{0}(\P\P_2y), \qquad \hat{\psi}_1^{1,\alpha_0}(y) = \hat{\psi}_2^{1,\alpha_0}(\P\P_2y),		
		\end{equation}
		for $j=1,2$. Using the first identity, we have for $i=1,2$,
		\begin{equation} \label{eq:c1,2}
		\mathbf{e}_i \cdot \mathbf{c}_j =\int_{\D}\P\P_2(y_i) \psi_j^{0}(\P\P_2 y)\de\sigma(y) = -\int_{\D}y_i \psi_j^{0}\de\sigma = -\mathbf{e}_i \cdot \mathbf{c}_j,
		\end{equation}
		where $\mathbf{e}_i$ is the $i$\textsuperscript{th} standard vector.
		Using the second identity of \eqref{eq:psisym}, we have
		\begin{equation}\label{eq:intpsihat}
		\int_{\D} \hat{\psi}_1^{1,\alpha_0} \dx \sigma = \int_{\D} \hat{\psi}_2^{1,\alpha_0} \dx \sigma.		
		\end{equation}
		Using \eqref{eq:c1,2} and \eqref{eq:intpsihat}, we find from \eqref{eq:leadingorder} that
		$$\S_D^{0,0}[\psi_1^0] = \frac{1}{2}\Chi_{\D_1} - \frac{1}{2}\Chi_{\D_2}, \qquad \S_D^{0,0}[\psi_2^0] = -\frac{1}{2}\Chi_{\D_1} + \frac{1}{2}\Chi_{\D_2}.$$
		Since $\S_D^{0,0}$ is injective on $L^2_0(\D)$, and $\S_D^{0,0}$ does not depend on $\alpha_0$, we conclude that $\psi_j^0$ does not depend on $\alpha_0$.
		\end{proof}
		
		\begin{proof}[Proof of (ii)]
		In the proof of \Cref{lem:cap0} it was proved that $\psi_j^{0}$, and hence $\mathbf{c}_j$, is real-valued. The first statement of \textit{(ii)} now follows from \eqref{eq:c1,2} and the fact that $\mathbf{c}_1 = -\mathbf{c}_2$.	
		
		Next, we prove the second statement of \textit{(ii)}. Since $\hat \S_D^{\omega \alpha_0,\omega}[\psi_i^{\omega\alpha_0,\omega}] = \Chi_{\p D_i}$ we use the asymptotic expansions in \Cref{sec:C0} to conclude that 
		\begin{align*}
		\S_D^{0,0}[\hat{\psi}_1^{1,\alpha_0}- \hat{\psi}_2^{1,\alpha_0}]	&= K \Chi_ {\D} - \frac{\alpha\cdot(x_1,x_2)}{2k_3L^2} \int_{\D} \left(\hat{\psi}_1^{1,\alpha_0}- \hat{\psi}_2^{1,\alpha_0}\right) \dx \sigma \\
		&= K \Chi_ {\D},
		\end{align*}
		for some constant $K$, where we have used \eqref{eq:intpsihat} in the last step. From \Cref{lem:phi=0} it follows that 	
		$$\hat{\psi}_1^{1,\alpha_0} = \hat{\psi}_2^{1,\alpha_0}.$$
		Then, using \Cref{lem:C1}, we can write the matrix $C^{1,\alpha_0}$ as
		$$C^{1,\alpha_0} = -\frac{\iu w_3 L^2}{2} \begin{pmatrix} 1 & 1 \\ 1 & 1\end{pmatrix} + h\begin{pmatrix} 1 & -1 \\ -1 & 1\end{pmatrix} + O(\omega),$$
		where
		$$h = \int_{\D}\S_1^{\alpha_0}[\psi_1^{0}] \psi_1^{0} \dx \sigma.$$
		The only remaining task is to explicitly compute $h$. To this end, we write the kernel function $G_1^{\alpha_0}$ of $\S_1^{\alpha_0}$ as
		$$G_1^{\alpha_0}(x) = K_1(x) + K_2^{\alpha_0}(x) + K_3^{\alpha_0}(x),$$
		and hence
		$$h = \int_{\D}\int_{\D} \big(K_1(x-y) + K_2^{\alpha_0}(x-y) +  K_3^{\alpha_0}(x-y)  \big)\psi_1^0(x)\psi_1^0(y) \dx\sigma(x)\dx\sigma(y),$$
		where
		$$K_1(x) = \frac{\iu w_3x_3^2}{4L^2}, \quad K_2^{\alpha_0}(x) = \alpha_0\cdot\left( \frac{\iu|x_3|(x_1,x_2)}{2L^2} + g_1(x)\right), \quad K_3^{\alpha_0}(x) = \frac{\iu \left(\alpha_0\cdot (x_1,x_2)\right)^2}{4w_3L^2}.$$
		Next, we will show that only $K_1$ gives a non-zero contribution to $h$. Firstly, we observe that $K_2^{\alpha_0}(\P_2x) = - K_2^{\alpha_0}(x)$ while $\psi_1^0(\P_2 x) = \psi_1^0(x)$. Therefore 
		$$\int_{\D}\int_{\D} K_2^{\alpha_0}(x-y)\psi_1^0(x)\psi_1^0(y) \dx\sigma(x)\dx\sigma(y) = 0.$$
		Secondly, we study the contribution of $K_3$. We have
		$$K_3^{\alpha_0}(x-y) = \frac{\iu}{4w_3L^2}\left( \left(\alpha_0\cdot (x_1,x_2)\right)^2 - 2\left(\alpha_0\cdot (x_1,x_2)\right)\left(\alpha_0\cdot (y_1,y_2)\right) + \left(\alpha_0\cdot (y_1,y_2)\right)^2 \right),$$
		and hence
		\begin{multline*}
		\int_{\D}\int_{\D} K_3^{\alpha_0}(x-y)\psi_1^0(x)\psi_1^0(y) \dx\sigma(x)\dx\sigma(y) = \\ \frac{\iu}{4w_3L^2}\left(\int_{\D}\left(\alpha_0\cdot (x_1,x_2)\right)^2 \psi_1^0(x)\dx \sigma(x) \int_{\D}\psi_1^0\dx\sigma\right. 
		+\int_{\D}\left(\alpha_0\cdot (y_1,y_2)\right)^2 \psi_1^0(y)\dx \sigma(y) \int_{\D}\psi_1^0\dx\sigma\\ -2\left.\int_{\D}\alpha_0\cdot(x_1,x_2) \psi_1^0(x)\dx\sigma(x)\int_{\D}\alpha_0\cdot(y_1,y_2) \psi_1^0(y)\dx\sigma(y)\right).
		\end{multline*}
		The first two terms in the right-hand side vanish since $\int_{\D}\psi_1^0\dx\sigma = 0$, while the last term vanishes since $\mathbf{e}_i \cdot \mathbf{c}_1 = 0$ for $i=1,2$. We conclude that only $K_1$ has a non-zero contribution to $h$. We have 
		$$K_1(x-y) = \frac{\iu w_3}{4L^2}\left(x_3^2 - 2x_3y_3 + y_3^2\right),$$
		so analogously to $K_3$, we can use the fact that $\int_{\D}\psi_1^0\dx\sigma = 0$ to conclude that 
		\begin{align*}
		h = \int_{\D}\int_{\D} K_1(x-y)\psi_1^0(x)\psi_1^0(y) \dx\sigma(x)\dx\sigma(y) &= -\frac{\iu w_3}{2L^2}\int_{\D}x_3 \psi_1^0(x)\dx\sigma(x)\int_{\D}y_3 \psi_1^0(y)\dx\sigma(y) \\
		&=-\frac{\iu w_3 c^2}{2L^2}.
		\end{align*}
		This proves the claim.
	\end{proof}
	\begin{thm} \label{thm:unidir}
		Assume that $\P_2D_i=D_i$, for $i=1,2$, and that $|bL^2| \neq |a c| $. Let $\lambda = \omega^2|D_1|$, and assume that $\omega$ is in the subwavelength regime such that $\lambda = \lambda_2^{0} + \lambda^*$ for $\lambda^* = O(\omega^3)$.	We then have the following asymptotic expansion of the scattering matrix:
		$$S = \frac{1}{\iu k_3\left(\frac{b^2L ^2}{a} - \frac{a c^2}{L^2}\right) - \lambda^*}\begin{pmatrix} \lambda^* - 2 k_3 bc & \iu k_3\left(\frac{b^2L ^2}{a} + \frac{a c^2}{L^2} \right) \\ \iu k_3\left(\frac{b^2L ^2}{a} + \frac{a c^2}{L^2} \right) & \lambda^* + 2 k_3 bc \end{pmatrix} + O(\omega),$$	
		where the error term is uniform with respect to $\lambda^*$ in a neighbourhood of $0$. In particular, we have $r_+ \neq r_-$, and to leading order $r_+$ and $r_-$ vanish, respectively, at $\lambda^* = \lambda_+$ and $\lambda^* = \lambda_-$ given by
		$$\lambda_+ = 2 k_3 bc, \quad \lambda_- = -2 k_3 bc.$$
	\end{thm}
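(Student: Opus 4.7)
The plan is to make the scattering matrix \eqref{eq:S} explicit by computing the constant $\mu$ from Proposition \ref{lem:modal_quasi} separately for each incident plane wave $u^{\mathrm{in}}=e^{\iu \k_s\cdot x}$ with $s=\pm 1$, and then assembling the four entries.

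First I evaluate the denominator of $\mu$. Using $\v_2^{0}=(-(a+\iu b),\,a-\iu b)^{\mathrm{T}}$, $d=(1,-1)^{\mathrm{T}}$, and the explicit form of $C^{1,\alpha_0}$ from Lemma \ref{lem:P2sym}(ii), a direct computation yields
\begin{equation*}
d^{\mathrm{T}}\bigl(\omega C^{v,1,\alpha_0}-\lambda^*I\bigr)\v_2^{0} \;=\; 2a\bigl[\lambda^* - \iu k_3 X\bigr] + O(\omega^4), \qquad X\,:=\,\frac{b^2L^2}{a}-\frac{ac^2}{L^2},
\end{equation*}
where I use $k_3=\omega w_3$. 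This incidentally identifies the non-degeneracy hypothesis $\Im(d^{\mathrm{T}}C^{v,1,\alpha_0}\v_2^{0})\neq 0$ of Proposition \ref{lem:modal_quasi} with $X\neq 0$, which is exactly the theorem's assumption $|bL^2|\neq|ac|$.

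Next I compute the numerator $d^{\mathrm{T}}p$. By duality, $\int_{\p D_j}(\S_D^{\alpha,k})^{-1}[u^{\mathrm{in}}]\de\sigma = \int_{\p D}u^{\mathrm{in}}(\S_D^{-\alpha,k})^{-1}[\Chi_{\p D_j}]\de\sigma$. Taylor-expanding $u^{\mathrm{in}}=1+\iu\k_s\cdot y+O(\omega^2)$, applying Lemma \ref{lem:psi_err} and Lemma \ref{lem:P2sym}(i), and using $\int_{\p D}\psi_j^{0}\de\sigma=0$ reduces the right-hand side to $\iu\k_s\cdot\mathbf{c}_j + \omega\int_{\p D}\psi_j^{1,-\alpha_0}\de\sigma + O(\omega^2)$. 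The identity $\hat\psi_1^{1,\alpha_0}=\hat\psi_2^{1,\alpha_0}$ established inside the proof of Lemma \ref{lem:P2sym}(ii), combined with Lemma \ref{lem:C1}, gives $\int_{\p D}\psi_j^{1,-\alpha_0}\de\sigma = \iu w_3L^2 + O(\omega)$. Together with $\k_s\cdot\mathbf{c}_1=sk_3c$ and $\k_s\cdot\mathbf{c}_2=-sk_3c$, this produces
\begin{equation*}
d^{\mathrm{T}}p \;=\; 2k_3\bigl(bL^2 - \iu asc\bigr) + O(\omega^4), \qquad \mu \;=\; \frac{k_3\bigl(bL^2/a - \iu sc\bigr)}{\lambda^* - \iu k_3 X} + O(\omega).
\end{equation*}

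Finally, I substitute this $\mu$ into $G_{s\sigma}(\lambda^*) = \mu(\sigma ac/L^2 - \iu b) - 1$, where $\sigma\in\{+1,-1\}$ refers to the direction of radiation and I have used $\k_\pm\cdot\mathbf{c}_1=\pm k_3c$. Expanding the product $(bL^2/a-\iu sc)(\sigma ac/L^2 - \iu b)$ for the four sign combinations yields two recurring shapes: $\pm 2bc - \iu X$ for the reflection entries (those with $\sigma = -s$, namely $G_{-+}$ and $G_{+-}$) and $-\iu(b^2L^2/a + ac^2/L^2)$ for the transmission entries (those with $\sigma = s$, namely $1+G_{++}$ and $1+G_{--}$). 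Combining with the $-1$ in the definition of $G_{s\sigma}$ and flipping the sign of both numerator and denominator, I recover the stated form of $S$, and the reflection zeros $\lambda_\pm=\pm 2k_3bc$ follow by setting the reflection numerators to zero. The main bookkeeping obstacle is controlling $d^{\mathrm{T}}p$ and the denominator at matching leading order $O(\omega^3)$ with $O(\omega^4)$ error; the $\P_2$-symmetry identities of Lemma \ref{lem:P2sym}, which make $\mathbf{c}_j$ purely vertical and force $\hat\psi_1^{1,\alpha_0}=\hat\psi_2^{1,\alpha_0}$, are what keep the leading computation clean enough that no delicate cancellation argument is needed.
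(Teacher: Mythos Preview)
Your proof is correct and follows essentially the same route as the paper's own argument: both compute $d^{\mathrm{T}}p$ via duality and the expansion of $u^{\mathrm{in}}$, compute the denominator $d^{\mathrm{T}}(\omega C^{v,1,\alpha_0}-\lambda^*I)\v_2^{0}$ from Lemma~\ref{lem:P2sym}(ii), and then substitute into the expression for $G_{s\sigma}$ derived from \eqref{eq:uscattered}. The only cosmetic difference is that the paper carries the factor $f(\lambda^*)=d^{\mathrm{T}}(\omega C^{v,1,\alpha_0}-\lambda^*I)\v_2^{0}$ through the computation of $G_{s\sigma}$ before simplifying, whereas you first divide out the common factor $2a$ to isolate $\mu$ explicitly and then multiply; the algebraic content is identical.
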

	\begin{proof} 
	We begin by computing $p$. We have
	\begin{align*} \label{eq:q}
	\int_{\D_i}\left({\S}_D^{\alpha,k}\right)^{-1}[u^{\mathrm{in}}]\de\sigma &= \int_{\D}u^{\mathrm{in}}\left({\S}_D^{-\alpha,k}\right)^{-1}[\Chi_{\p D_i}]\de\sigma = \int_{\D}\iu \k\cdot x \psi_i^{0} \de\sigma + \omega\int_{\D} \psi_i^{1,-\alpha_0}\de\sigma + O(\omega^2) \nonumber\\ &= \iu k_3 \left(L^2 - (-1)^{i}sc\right)   + O(\omega^2), \end{align*}
	where, as before, $s$ denotes the sign of the third component of $\k$. Then we find that 
	$$d^\mathrm{T} p = -2\iu k_3\left( sac + \iu bL^2\right) + O(\omega^4).$$
	Moreover, writing $f(\lambda^*) = d^\mathrm{T}\left(\omega C^{1,v,\alpha_0} - \lambda^*I\right)\v_2^{0}$ we have
	$$f(\lambda^*) =  - 2\iu k_3 \left( b^2 L^2  - \frac{a^2c^2}{L^2}\right) +2a\lambda^* + O(\omega^6).$$	
	We can then compute $G$ as
	\begin{align*}
	G_{s\sigma}(\lambda^*) = -\frac{2k_3}{ f(\lambda^*)L^2}\left( s a c  +\iu  bL ^2\right)\left( \sigma a c -\iu bL ^2\right) - 1 + O(\omega).
	\end{align*}
	Then, to leading order we have 
	$$G_{++}(\lambda^*) = G_{--}(\lambda^*), \qquad G_{+-}(\lambda^*) - G_{-+}(\lambda^*) = -\frac{8k_3 abc }{ f(\lambda^*)}.$$	
	If $b\neq 0$, it is clear that $r_+ \neq r_-$. Simplifying these expressions, we have 
	$$G_{\pm\mp}(\lambda^*) = \frac{-2a}{ f(\lambda^*)}\left(\lambda^* \pm 2k_3 bc \right)+ O(\omega).$$	
	To leading order, we then have the following expressions for $t_\pm$ and $r_\pm$:
	\begin{align*}
	r_+ &= \frac{\lambda^* - 2 k_3 bc}{\iu k_3\left(\frac{b^2L ^2}{a} - \frac{a c^2}{L^2}\right) - \lambda^*}, & t_- &= \frac{\iu k_3\left( \frac{b^2L ^2}{a} + \frac{a c^2}{L^2}\right)}{\iu k_3\left(\frac{b^2L ^2}{a} - \frac{a c^2}{L^2}\right)  - \lambda^*}, \\[0.3em]
	t_+ &= \frac{\iu k_3\left(\frac{b^2L ^2}{a} + \frac{a c^2}{L^2} \right)}{\iu k_3\left(\frac{b^2L ^2}{a} - \frac{a c^2}{L^2}\right) - \lambda^*}, & r_- &= \frac{ \lambda^* + 2 k_3 bc }{\iu k_3\left(\frac{b^2L ^2}{a} - \frac{a c^2}{L^2}\right) - \lambda^*}.
	\end{align*}
	The expression for $S$ and the zeros of $r_\pm$ follow directly from this.
\end{proof}

\begin{remark}
	There are two subwavelength frequency regimes not covered in \Cref{thm:unidir}: when $\omega$ is close to the first band at $\omega_1^0 = 0$ or when $\omega$ is well-separated from the two bands. When $\omega$ is close to $\omega_1^0$, \Cref{rmk:first} tells us that $t_+ = t_- = 1$ and $r_+=r_- = 0$. When $\omega$ is well-separated from $\omega_1^0$ and $\omega_2^\alpha$, the solution $q$ to \eqref{eq:eval_C_quasi_short} will be small. Consequently, it is easy to show that $t_+$  and $t_-$ will be small, while $r_+$ and $r_-$ have magnitude close to $1$. These regimes are demonstrated in \Cref{fig:scattering}.
\end{remark}
\begin{remark}
	The assumption $|bL^2| \neq |a c|$ comes from the condition $\Im(d^\mathrm{T} C^{v,1,\alpha_0} \v_2^{0}) \neq 0$ in \Cref{lem:modal_quasi}. At the critical point $b= \pm\frac{ac}{L^2}$, the denominator of $S$ will vanish at $\lambda^* = 0$. Around this point, we therefore expect the transmittance and reflectance to be very large, corresponding to extraordinary transmission. This is demonstrated numerically in \Cref{fig:extraordinary}.
\end{remark}

\begin{remark}
	Throughout this section, we use the classical convention for the scattering matrix $S$, defined in \eqref{eq:S_def}. If we instead define $S = S(\lambda^*)$ by 
	$$
	S = \begin{pmatrix}  t_+ & r_- \\  r_+ & t_- \end{pmatrix}.
	$$
	we see that the points $\lambda^* = \lambda_+$ and $\lambda^* = \lambda_-$ represent exceptional points of $S$ (see, for example, \cite{huang2017unidirectional} for further elaborations on the connection between unidirectional reflection and exceptional points).
\end{remark}

\begin{remark}
	In the case that ${b} = 0$, \ie{} without gain and loss, it is well known that $r_+ = r_-$ and $t_+ = t_-$, which is consistent with the fact that $G_{+-} = G_{-+}$ and $G_{++} = G_{--}$ in this case.
\end{remark}

\subsubsection{Numerical illustration}
	\begin{figure}
	\begin{center}
		\includegraphics[width=0.7\linewidth]{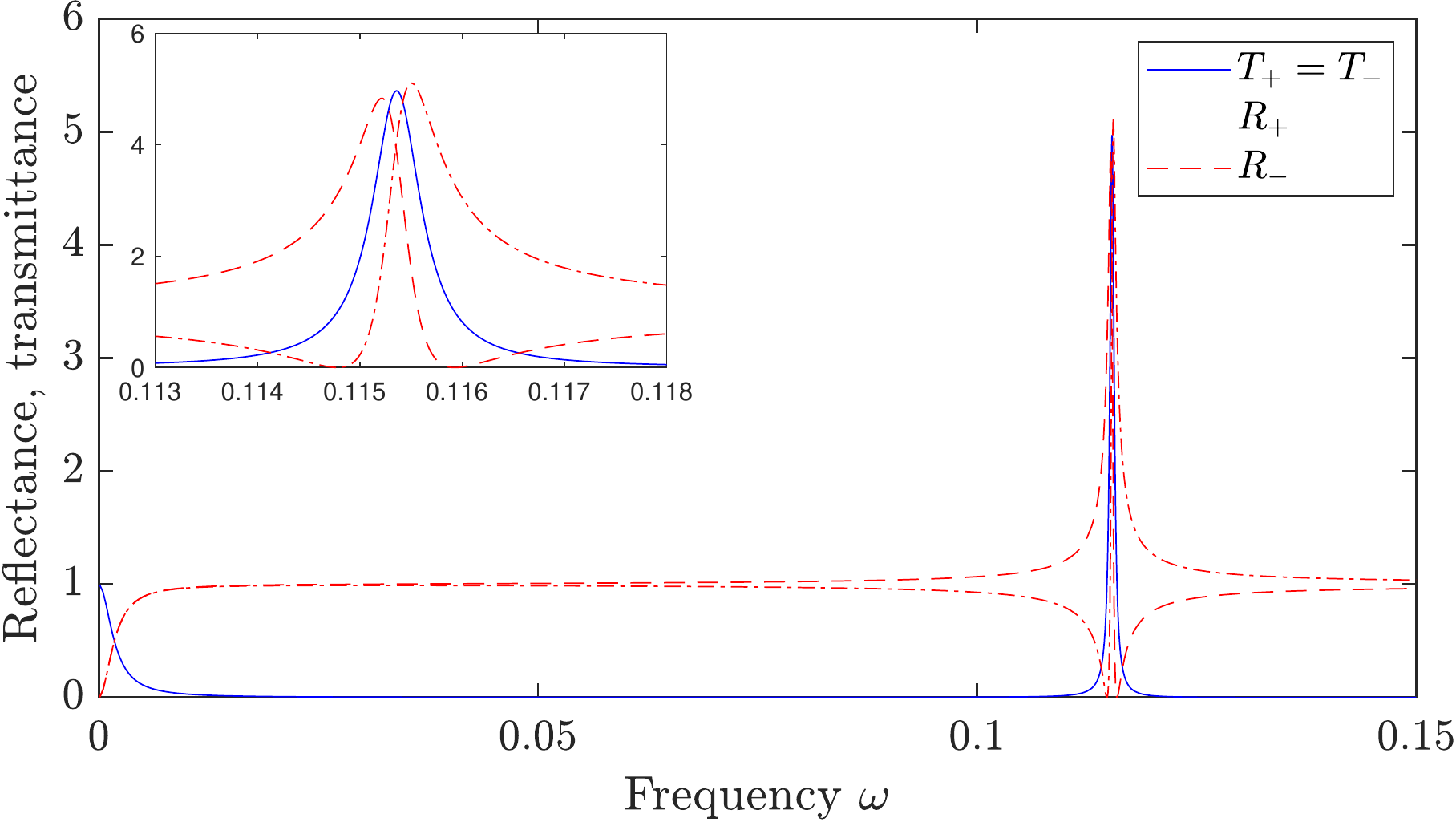}
	\end{center}
	\caption{Plot of the transmittance $T_\pm = |t_\pm|^2$ (blue) and reflectance $R_\pm = |r_\pm|^2$ (red) as functions of the frequency. The inlay shows the behaviour around the critical frequency range and demonstrates both unidirectional reflection and extraordinary transmission. Here, we simulate a two-dimensional problem with the same parameters and the same frequency range as \Cref{fig:bandexceptional}, with incident direction $\w = \frac{1}{2}\left(-\sqrt{3}, \ \pm 1\right)^\mathrm{T}$.} \label{fig:scattering}
\end{figure}
\Cref{fig:scattering} shows the transmittance $T_\pm = |t_\pm|^2$ and reflectance $R_\pm = |r_\pm|^2$ as functions of the frequency. The computations were performed using the multipole discretization (see, for example, \cite{ammari2018mathematical}), independently of the asymptotic analysis in the previous subsections. As is well known for $\mathcal{PT}$-symmetric structures (see \emph{e.g.} \cite{yuan2019unidirectional}), the two transmission coefficients $t_+$ and $t_-$ coincide. The figure clearly shows the shifted zeros of the reflectances close to the second resonant frequency. For a frequency at one of these zeros, the system will exhibit unidirectional reflectionless transmission.

Due to the gain and loss, the reflectance and transmittance satisfy the ``generalized'' energy conservation relation \cite{li2012conservation}
$$R_+R_- + 2\sqrt{T_+T_-} - T_+T_- = 1,$$
which, in particular, allows the scattering matrix to be non-unitary and allows the reflectance or transmittance to exceed 1. In \Cref{fig:extraordinary}, the peak transmittance is plotted as a function of the gain/loss parameter $b$, which clearly demonstrates the extraordinary transmission.
	\begin{figure}[h]
	\begin{center}
		\includegraphics[width=0.7\linewidth]{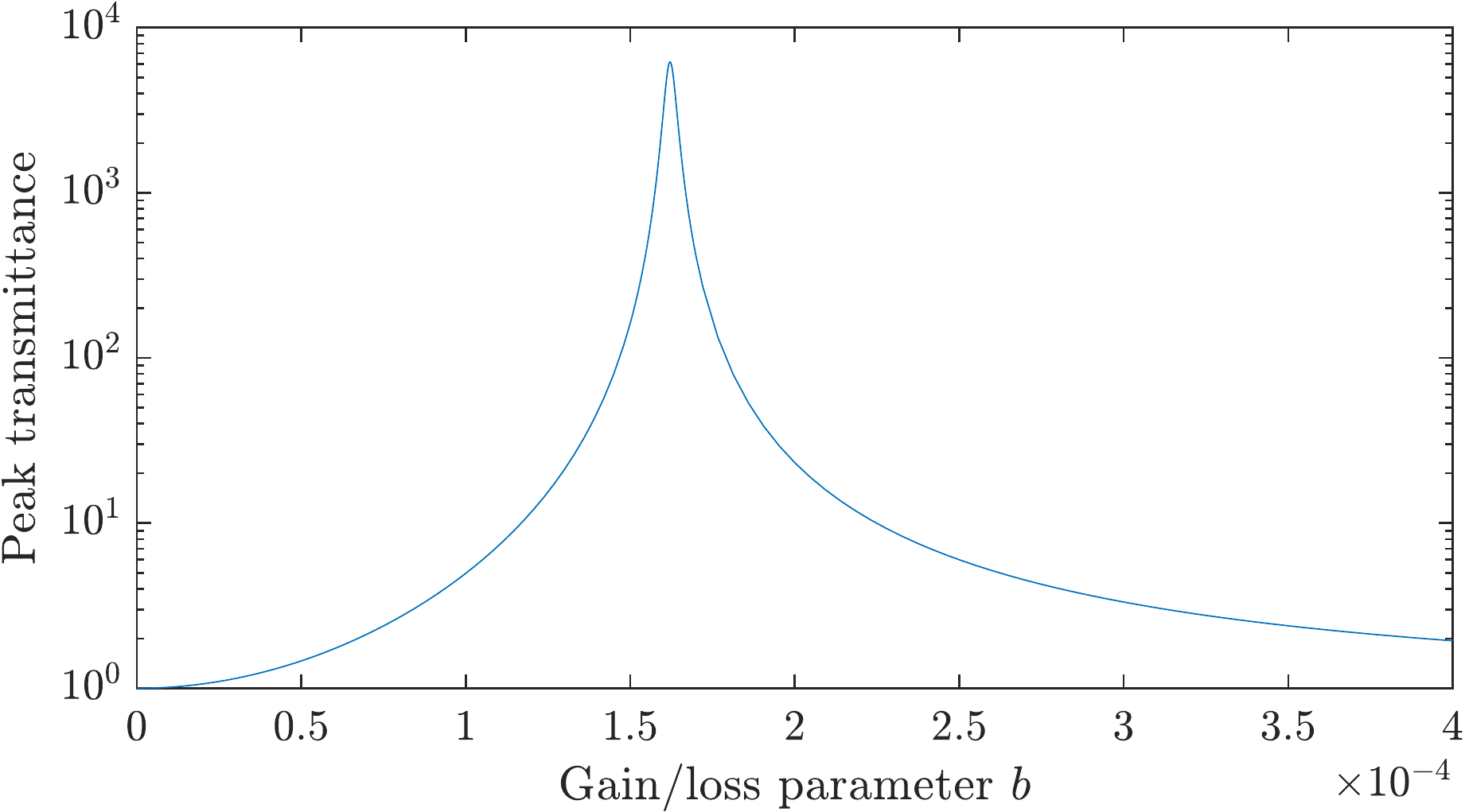}
	\end{center}
	\caption{Plot of the peak transmittance as a function of the gain/loss parameter. The extraordinarily high transmittance at $b = \left| \frac{ac}{L^2}\right|$ is clearly demonstrated. Here, we simulate a two-dimensional problem with the same parameters as \Cref{fig:bandexceptional} and \Cref{fig:scattering}.} \label{fig:extraordinary}
\end{figure}

	\section{Resonator cavities} \label{sec:cavities}
	In this section, we examine the properties of finite metamaterials taking the form of cavities filled with a large number of small subwavelength resonators with non-real material parameters. While the pair of high-contrast resonators in \Cref{sec:exceptional} interacts with wavelengths much larger than their size, we would like to design these cavities so that they might exhibit similar exceptional behaviour in response to wavelengths of the same order as their dimensions. We study this system using a homogenization approach, deriving the effective equations as the size of the resonators becomes small and the number of resonators becomes large.

	\subsection{Homogenization of non-Hermitian cavities} \label{sec:symm_cavities}
	
	\begin{figure}
		\centering
		\begin{tikzpicture}
		%%% omega 2
		\draw (-0.1,0) ellipse (1.6 and 1.5);
		\node at (-1.7,-1.2){\large $\Omega_+$};
		\draw circle (0.09) node{\tiny $+$};
		\begin{scope}[xshift=0.6cm,yshift=0.6cm]
		\draw circle (0.09) node{\tiny $+$};
		\end{scope}
		\begin{scope}[xshift=-0.3cm,yshift=0.5cm]
		\draw circle (0.09) node{\tiny $+$};
		\end{scope}
		\begin{scope}[xshift=-0.5cm,yshift=-0.5cm]
		\draw circle (0.09) node{\tiny $+$};
		\end{scope}
		\begin{scope}[xshift=0.4cm,yshift=-0.6cm]
		\draw circle (0.09) node{\tiny $+$};
		\end{scope}
		\begin{scope}[xshift=-0.8cm,yshift=0.1cm]
		\draw circle (0.09) node{\tiny $+$};
		\end{scope}
		%%% omega 2
		\begin{scope}[xshift=5cm]
		\draw (-0.1,0) ellipse (1.6 and 1.5);
		\node at (1.7,-1.2){\large $\Omega_-$};
		\begin{scope}[xshift=0.4cm]
		\draw circle (0.09) node{\tiny $-$};
		\end{scope}
		\begin{scope}[xshift=0.1cm,yshift=-0.5cm]
		\draw circle (0.09) node{\tiny $-$};
		\end{scope}
		\begin{scope}[xshift=-0.3cm,yshift=0.6cm]
		\draw circle (0.09) node{\tiny $-$};
		\end{scope}
		\begin{scope}[xshift=0.5cm,yshift=0.5cm]
		\draw circle (0.09) node{\tiny $-$};
		\end{scope}
		\begin{scope}[xshift=-0.4cm,yshift=-0.6cm]
		\draw circle (0.09) node{\tiny $-$};
		\end{scope}
		\begin{scope}[xshift=-0.8cm,yshift=0.1cm]
		\draw circle (0.09) node{\tiny $-$};
		\end{scope}
		\end{scope}
		\end{tikzpicture}
		\caption{A pair of $\mathcal{PT}$-symmetric cavities of many small resonators. Here, $+$ and $-$ denote opposite signs of the imaginary part of the material coefficients.} \label{fig:PT_cavities}
	\end{figure}
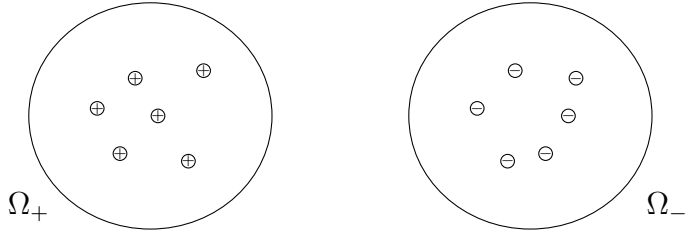
	We first derive a version of \Cref{lem:modal} which describes how an asymptotically small resonator $D_0^r=r D_0+z$ (where $D_0$ is some fixed, connected domain) scatters an incoming field. So that the resonant frequencies are of order 1, we will assume that if the size of the resonator $r\to0$ then the material parameters of its interior are given by
	\begin{equation} \label{ass:parameter_scaling}
	v_0^2\delta_0:=r^2a+\iu r^{2+\epsilon_1}b,
	\end{equation}
	for some fixed $0<\epsilon_1<1$ and real-valued constants $a, b=O(1)$. We will fix $a>0$ and consider the cases $b>0$ (gain on each small resonator) and $b<0$ (loss on each small resonator) separately. We study the scattering problem
	\begin{equation} \label{eq:scattering_singleres}
	\left\{
	\begin{array} {ll}
	\ds \Delta {u}+ k^2 {u}  = 0 & \text{in } \R^3 \setminus \overline{D_0^r}, \\[0.3em]
	\ds \Delta {u}+ k_0^2 {u}  = 0 & \text{in } D_0^r, \\
	\nm
	\ds  {u}|_{+} -{u}|_{-}  = 0  & \text{on } \partial D_0^r, \\
	\nm
	\ds  \delta_0 \frac{\partial {u}}{\partial \nu} \bigg|_{+} - \frac{\partial {u}}{\partial \nu} \bigg|_{-} = 0 & \text{on } \partial D_0^r, \\
	\nm
	\ds u(x) - u^{\mathrm{in}}(x) & \text{satisfies the Sommerfeld radiation} \\ & \text{condition as }  |x| \rightarrow \infty, 
	\end{array}
	\right.
	\end{equation}
	where $\delta_0\ll1$ and $k_0=\omega/v_0$. 
	
	\begin{lemma} \label{lem:point_scatter}
	Let $D_0\subset\mathbb{R}^3$ be some fixed resonator (whose boundary satisfies $\p D_0 \in C^{1,s}$ for some $0 < s < 1$) and define the small resonator $D_0^r$, for some small $r>0$, as
	\begin{equation*}
	D_0^r=r D_0+z,
	\end{equation*}
	where $z\in\mathbb{R}^3$ is the new centre of $D_0^r$. Assume that the material parameters within $D_0^r$ satisfy \eqref{ass:parameter_scaling} and that $\omega^2-(\omega^*)^2=Cr^{\epsilon_1}$ for some fixed $C\in\mathbb{C}$, where
	\begin{equation*}
	(\omega^*)^2=\frac{(a+\iu r^{\epsilon_1}b)\mathrm{Cap}_{D_0}}{|D_0|}.
	\end{equation*}
	As $r\rightarrow 0$, the solution to the Helmholtz problem \eqref{eq:scattering_singleres} for scattering by $D_0^r$ can be written as
	\begin{equation*} \label{eq:point_scatter_one}
	u(x)-u^{\mathrm{in}}(x)=r\mathrm{Cap}_{D_0}\, \frac{\omega^2}{\omega^2-(\omega^*)^2} G^k(x-z) u^{\mathrm{in}}(z)+O(r^{2-\epsilon_1}).
	\end{equation*}
	\end{lemma}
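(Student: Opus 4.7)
The plan is to adapt the layer potential and capacitance analysis of \Cref{lem:modal} to the single small resonator, exploiting the rescaling $y = z + r\tilde y$ that turns $D_0^r$ into the fixed reference body $D_0$. First I would represent $u = u^{\mathrm{in}} + \S_{D_0^r}^k[\psi]$ in $\R^3\setminus\overline{D_0^r}$ and $u = \S_{D_0^r}^{k_0}[\phi]$ inside $D_0^r$, impose the transmission conditions on $\p D_0^r$, and pull everything back to $\p D_0$ using the identities $\S_{D_0^r}^k = r\S_{D_0}^{rk}$ (acting on rescaled densities) and $\K_{D_0^r}^{k,*} = \K_{D_0}^{rk,*}$. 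Since $rk$ and $rk_0$ are both $O(r)$, the expansions \eqref{eq:exp_S} and \eqref{eq:exp_K} apply and the analysis from the proof of \Cref{lem:modal} goes through almost verbatim.

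The leading-order equation $\left(-\tfrac{1}{2}I + \K_{D_0}^*\right)[\tilde\phi] = 0$ forces $\tilde\phi = q_1 \psi_1^{D_0} + O(r^2 + \delta_0)$, where $\psi_1^{D_0} := \S_{D_0}^{-1}[\Chi_{\p D_0}]$. Integrating the second transmission condition over $\p D_0$ using \Cref{lem:ints} produces the scalar equation
\[
\bigl(v_0^2\delta_0\, \mathrm{Cap}_{D_0} - r^2\omega^2 |D_0|\bigr)\, q_1 = -\,v_0^2\delta_0 \int_{\p D_0} \S_{D_0}^{-1}[\tilde u^{\mathrm{in}}]\, \de\sigma + \text{(errors)},
\]
and since $\tilde u^{\mathrm{in}}(\tilde y) = u^{\mathrm{in}}(z) + r\tilde y \cdot \nabla u^{\mathrm{in}}(z) + O(r^2)$, the right-hand side reduces to $v_0^2\delta_0\, \mathrm{Cap}_{D_0}\, u^{\mathrm{in}}(z) + O(r^2)$. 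Dividing through by $r^2|D_0|$ converts the prefactor into $(\omega^*)^2 - \omega^2$ by the definition of $(\omega^*)^2$, yielding $q_1 = (\omega^*)^2 u^{\mathrm{in}}(z) / \bigl((\omega^*)^2 - \omega^2\bigr) + \text{error}$.

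To extract the scattered field at $x$ away from $z$, I would use the small-resonator expansion $\S_{D_0^r}^k[\phi](x) = G^k(x-z) \int_{\p D_0^r}\phi\, \de\sigma + O(r^2)$ applied to both $q_1 S_1^\omega$ and the correction term $-\,\S_{D_0^r}^k\!\left[\S_{D_0^r}^{-1}[u^{\mathrm{in}}]\right]$. Using $\int_{\p D_0^r}\psi_1\, \de\sigma = -r\mathrm{Cap}_{D_0}$ and $\int_{\p D_0^r}\S_{D_0^r}^{-1}[u^{\mathrm{in}}]\, \de\sigma = -r\mathrm{Cap}_{D_0}\, u^{\mathrm{in}}(z) + O(r^2)$, the two contributions combine to give $r\mathrm{Cap}_{D_0}\, G^k(x-z)\bigl(u^{\mathrm{in}}(z) - q_1\bigr)$; substituting for $q_1$ and simplifying yields $u^{\mathrm{in}}(z) - q_1 = \omega^2 u^{\mathrm{in}}(z) / \bigl(\omega^2 - (\omega^*)^2\bigr)$, producing precisely the claimed expression.

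The main obstacle is the uniform error control in the resonant regime $\omega^2 - (\omega^*)^2 = Cr^{\epsilon_1}$. Every naive $O(r^2)$ error in the right-hand side of the scalar equation is amplified by the factor $1/(\omega^2 - (\omega^*)^2) = O(r^{-\epsilon_1})$ coming from inverting the prefactor, so in order to land on the stated $O(r^{2-\epsilon_1})$ one must carry the Taylor expansion of $u^{\mathrm{in}}$ around $z$ to second order and verify that the first-order contribution $r\tilde y\cdot \nabla u^{\mathrm{in}}(z)$ does not contaminate the leading-order identification of $(\omega^*)^2$ (it only affects the $r^2$-level). Parallel to this, the $O((rk)^2)$ remainders in the expansions \eqref{eq:exp_S}--\eqref{eq:exp_K} must be kept at $O(r^2)$ and the invertibility of the effective scalar operator must be confirmed to be of order $r^{\epsilon_1}$ throughout a neighbourhood of the resonance. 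With these bookkeeping steps in place, combining the expressions for $q_1$ and for the far-field tails of $S_1^\omega$ and $\S_{D_0^r}^k[\S_{D_0^r}^{-1}[u^{\mathrm{in}}]]$ produces the lemma.
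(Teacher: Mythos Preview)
Your proposal is correct and follows essentially the same approach as the paper: layer-potential representation, reduction to the scalar capacitance equation via \Cref{lem:modal}-type arguments, and then the rescaling identities $\int_{\p D_0^r}\S_{D_0^r}^{-1}[\Chi_{\p D_0^r}]\de\sigma = -r\mathrm{Cap}_{D_0}$, $S_{D_0^r}^\omega \approx -r\mathrm{Cap}_{D_0}G^k(x-z)$, etc., to read off the point-scatterer formula. Your explicit discussion of the $r^{-\epsilon_1}$ amplification of the error terms in the resonant regime is in fact more careful than what the paper writes down; the paper simply lists the scaling identities and declares that the result follows.
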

	
	\begin{proof}
	The solutions to the scattering problem can be represented as 
	\begin{equation*} %\label{eq:layer_potential_representation}
	u = \begin{cases}
	u^{\mathrm{in}}(x)+\S_{D_0^r}^{k}[\psi](x), & x\in\R^3 \setminus \overline{D_0^r},\\
	\S_{D_0^r}^{k_1}[\phi](x), & x\in D_0^r,
	\end{cases}
	\end{equation*} 
	where $k_1 = \omega/v_1$, for some surface potentials $(\phi,\psi)\in L^2(\partial D_0^r)\times L^2(\partial D_0^r)$, which must be chosen so that $u$ satisfies the transmission conditions across $\partial D_0^r$. 
	
	We wish to replicate \Cref{lem:modal} in the present setting, using asymptotic expansions in terms of $r\ll1$ (and $\delta=O(r^2)$), while $\omega=O(1)$. We have, as $r\to0$, that 
	\begin{gather*} 
	\S_{D_0^r}[\phi-\psi]=u^{\mathrm{in}}+O(r) \quad \text{on} \ \p D_0^r, %\label{eq:first}
	\\
	\left(-\frac{1}{2}I+\K_{D_0^r}^*+\frac{\omega^2}{v_1^2}\K_{{D_0^r},2}\right)[\phi]-\delta_0\left(\frac{1}{2}I+\K_{D_0^r}^*\right)[\psi]=O(r^2) \quad \text{on} \ \p D_0^r.
	\end{gather*}
	Repeating the arguments of \Cref{lem:modal}, we find that the solution to the scattering problem can be written as 
	\begin{equation*}
	u-u^{\mathrm{in}} = q S_{D_0^r}^\omega - \S_{D_0^r}^{k}\left[\S_{D_0^r}^{-1}[u^{\mathrm{in}}]\right] + O(r),
	\end{equation*}
	where
	\begin{equation*}
	S_{D_0^r}^\omega(x) = \begin{cases}
	\S_{D_0^r}^{k}\left[\S_{D_0^r}^{-1}[\Chi_{\D_0^r}]\right](x), & x\in \mathbb{R}^3\setminus \overline{D_0^r},\\
	\S_{D_0^r}^{k_1}\left[\S_{D_0^r}^{-1}[\Chi_{\D_0^r}]\right](x), & x\in D_0^r,
	\end{cases} 
	\end{equation*}
	and $q=q(\omega)$ satisfies
	\begin{equation*}
	\left(-\omega^2|D_0^r|-v_0^2\delta_0 \,\int_{\D_0^r}\S_{D_0^r}^{-1}[\Chi_{\D_0^r}]\de\sigma\right)q=-v_0^2\delta_0\int_{\D_0^r} \S_{D_0^r}^{-1}[u^{\mathrm{in}}] \de\sigma +O(r^4).
	\end{equation*}
	Let
	\begin{equation*}
	\mathrm{Cap}_{D_0}:=-\int_{\D_0}\S_{D_0}^{-1}[\Chi_{\D_0}]\de\sigma,
	\end{equation*}
	then we have that
	\begin{gather*}
	\int_{\D_0^r}\S_{D_0^r}^{-1}[\Chi_{\D_0^r}]\de\sigma=-r\mathrm{Cap}_{D_0}, \qquad
	\int_{\D_0^r}\S_{D_0^r}^{-1}[u^{\mathrm{in}}] \de\sigma = -r\mathrm{Cap}_{D_0} u^{\mathrm{in}}(z)+O(r^2), \nonumber \\
	S_{D_0^r}^\omega= -r\mathrm{Cap}_{D_0}G^k(x-z)+O(r^2), \label{eq:scaling_properties}\\
	\S_{D_0^r}^k\left[\S_{D_0^r}^{-1}[u^{\mathrm{in}}]\right]=-r\mathrm{Cap}_{D_0}u^{\mathrm{in}}(z)G^k(x-z)+O(r^2), \nonumber
	\end{gather*}
	from which the result follows.
	\end{proof}
	
	We now wish to consider a spherical domain $\Omega$ which contains a (large) number of small, identical resonators (\emph{e.g.} $\Omega_+$ or $\Omega_-$ in \Cref{fig:PT_cavities}). If $D_0$ is a fixed domain, then for some $r>0$ the $N$ resonators are given, for $1\leq j\leq N$, by
	\begin{equation*}
	D_{0,j}^{r,N}=r D_0+z_j^N,
	\end{equation*}
	for positions $z_j^N$. We will always assume that $r$ is sufficiently small such that the resonators are not overlapping and that $D_{0}^{r,N}=\bigcup_{j=1}^N D_{0,j}^{r,N} \Subset \Omega$. We choose the number of resonators $N$ so that there exists some positive number $\Lambda$ such that
	\begin{equation} \label{ass:number}
	r^{1-\epsilon_1}N=\Lambda.
	\end{equation}
	The choice of $\Lambda$ will be an important subtlety in the major theorem of this section.

	We will find the effective equation in the specific case that the frequency $\omega=O(1)$ satisfies
	\begin{equation} \label{ass:frequency}
	\omega^2=\frac{a\mathrm{Cap}_{D_0}}{|D_0|}.
	\end{equation} 
	In this case, we are able to use a result from \cite{ammari2017effective} which says that, since the resonators are small, we can use the point-scatter approximation from \Cref{lem:point_scatter} to describe how they interact with incoming waves. To do so, we must make some extra assumptions on the regularity of the distribution $\{z_j^N:1\leq j\leq N\}$ so that the system is well behaved as $N\to\infty$ (under the assumption \eqref{ass:number}). In particular, we assume that there exists some constant $\eta$ such that for any $N$ it holds that
	\begin{equation} \label{ass:dist}
	\min_{i\neq j} |z_i^N-z_j^N| \geq \frac{\eta}{N^{1/3}},
	\end{equation}
	and, further, there exists some $0<\epsilon_0<1$ and constants $C_1,C_2>0$ such that for all $h\geq 2\eta N^{-1/3}$,
	\begin{align}
	\sum_{|x-z_j^N|\geq h} \frac{1}{|x-y_j^N|^2}\leq C_1 N |h|^{-\epsilon_0}, \qquad&\text{uniformly for all } x\in\Omega,\label{ass:reg1}\\
	\sum_{2\eta N^{-1/3}\leq|x-z_j^N|\leq 3h} \frac{1}{|x-y_j^N|}\leq C_2 N |h|, \qquad&\text{uniformly for all } x\in\Omega. \label{ass:reg2}
	\end{align}
	Finally, we will also need that 
	\begin{equation} \label{ass:epsilon}
	\epsilon_2:=\frac{\epsilon_1}{1-\epsilon_1}-\frac{\epsilon_0}{3}>0.
	\end{equation}
	If we represent the field that is scattered by the collection of resonators $D_0^{r,N}=\bigcup_{j=1}^N D_{0,j}^{r,N}$ as
	\begin{equation*} %\label{eq:layer_potential_representation}
	u^N(x)= \begin{cases}
	u^{\mathrm{in}}(x)+\S_{D_0^{r,N}}^{k}[\psi^N](x), & x\in\R^3 \setminus \overline{D_0^{r,N}},\\
	\S_{D_0^{r,N}}^{k_0}[\phi^N](x), & x\in D_0^{r,N},
	\end{cases}
	\end{equation*} 
	for some $\psi^N, \phi^N\in L^2(\p D_0^{r,N})$,	then we have the following lemma, which follows from \cite[Proposition~3.1]{ammari2017effective}. This justifies using a point-scatter approximation to describe the total incident field acting on the resonator $D_{0,j}^{r,N}$ and the scattered field due to $D_{0,j}^{r,N}$, defined respectively as
	\begin{equation*}
	u_j^{in,N}=u^{\mathrm{in}}+\sum_{i\neq j} \S_{D_{0,i}^{r,N}}^k[\psi^N] \qquad \text{and} \qquad 	u_j^{s,N}=\S_{D_{0,j}^{r,N}}^k[\psi^N].
	\end{equation*}

	\begin{lemma} \label{lem:points}
		Under the assumptions \eqref{ass:frequency}--\eqref{ass:epsilon}, it holds that the total incident field acting on the resonator $D_{0,j}^{r,N}$ is given, at $z_j^N$, by
		\begin{equation*}
		u_j^{in,N}(z_j^N)=u^{\mathrm{in}}(z_j^N)+\sum_{i\neq j} r\mathrm{Cap}_{D_0}\, \frac{\omega^2}{\omega^2-(\omega^*)^2} G^k(z_j^N-z_i^N)u^{\mathrm{in}}(z_j^N),
		\end{equation*}
		up to an error of order $O(N^{-\epsilon_2})$. Similarly, it holds that the scattered field due to the resonator $D_{0,j}^{r,N}$ is given, at $x$ such that $|x-z_j^N|\gg r$, by
		\begin{equation*}
		u_j^{s,N}(x)= r\mathrm{Cap}_D\, \frac{\omega^2}{\omega^2-(\omega^*)^2} G^k(x-z_j^N) u_j^{in,N}(z_j^N) ,
		\end{equation*}
		up to an error of order $O(N^{-\epsilon_2}+r|x-z_j^N|^{-1})$.
	\end{lemma}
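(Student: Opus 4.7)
The plan is to apply Lemma \ref{lem:point_scatter} to each individual resonator $D_{0,j}^{r,N}$, treating $u_j^{in,N}$ as the effective incident field on resonator $j$, and then to use the regularity assumptions \eqref{ass:dist}--\eqref{ass:reg2} to control the accumulated error as $N\to\infty$. The lemma we are proving is essentially a Foldy--Lax point-scatterer reduction of the full multi-body scattering problem, and the result is structurally identical to \cite[Proposition 3.1]{ammari2017effective}; the task is to verify that the assumptions in the present setting support that proposition's conclusion with error $O(N^{-\epsilon_2})$.

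First, I would observe that, by Lemma \ref{lem:point_scatter} applied locally to $D_{0,j}^{r,N}$ (which is the dilation and translation of $D_0$ by $r$ and $z_j^N$) with incident field $u_j^{in,N}$, the scattered field satisfies
$$u_j^{s,N}(x) = r\mathrm{Cap}_{D_0}\frac{\omega^2}{\omega^2-(\omega^*)^2}\, G^k(x-z_j^N)\, u_j^{in,N}(z_j^N) + O(r^{2-\epsilon_1}),$$
for $|x-z_j^N|\gg r$. Summing over the $N-1$ other resonators and evaluating at $z_j^N$ gives the coupled system
$$u_j^{in,N}(z_j^N) = u^{\mathrm{in}}(z_j^N) + \sum_{i\neq j} r\mathrm{Cap}_{D_0}\frac{\omega^2}{\omega^2-(\omega^*)^2}\, G^k(z_j^N-z_i^N)\, u_i^{in,N}(z_i^N) + R_j,$$
where $R_j$ collects the $N-1$ individual remainders. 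The lemma's first claim is obtained by replacing $u_i^{in,N}(z_i^N)$ with $u^{\mathrm{in}}(z_i^N)$ on the right-hand side.

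The second step is the error analysis, which is where the regularity assumptions enter. Using \eqref{ass:number}, the prefactor $rN = \Lambda r^{\epsilon_1}$ is small, so the linear Foldy operator $T$ acting on the vector $(u_i^{in,N}(z_i^N))_i$ has a norm controlled by sums of the form $\sum_{i\neq j}|G^k(z_j^N-z_i^N)|$. Splitting this sum at a cutoff $h\geq 2\eta N^{-1/3}$ and estimating the near-field via \eqref{ass:reg2} and the far-field via \eqref{ass:reg1} bounds $\|T\|$ uniformly; this in turn lets one iterate the point-scatter relation. The difference between the true $u_i^{in,N}$ and $u^{\mathrm{in}}$ in the sum can then be absorbed by balancing the optimal cutoff $h$ against the individual remainder $O(r^{2-\epsilon_1})=O(N^{-(2-\epsilon_1)/(3(1-\epsilon_1))})$, and condition \eqref{ass:epsilon} is precisely what guarantees the resulting exponent is $\epsilon_2>0$, giving the stated error $O(N^{-\epsilon_2})$. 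The second claim for $u_j^{s,N}(x)$ then follows immediately by inserting the approximation of $u_j^{in,N}(z_j^N)$ into the local expression from Lemma \ref{lem:point_scatter}, picking up the extra $O(r|x-z_j^N|^{-1})$ from the $G^k$ asymptotics near $z_j^N$.

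The main obstacle is the multiple-scattering control: the naive bound on $\sum_{i\neq j}|G^k(z_j^N-z_i^N)|$ diverges like $N^{1/3}$ because of the $|z_j^N-z_i^N|^{-1}$ singularity, so one must carefully exploit \eqref{ass:reg1}--\eqref{ass:reg2} (which encode a discrete $L^p$-type regularity of the point distribution) rather than a crude pointwise bound, and then choose $h=h(N)$ optimally so that the near-field and far-field errors balance. Given that this is the technical core of \cite{ammari2017effective}, in the write-up I would invoke that proposition once the setup is translated into its framework, as the authors do.
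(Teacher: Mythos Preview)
Your proposal is correct and matches the paper's approach exactly: the paper does not give an independent proof of this lemma but simply states that it ``follows from \cite[Proposition~3.1]{ammari2017effective},'' and your write-up correctly identifies this as a Foldy--Lax point-scatterer reduction built on Lemma~\ref{lem:point_scatter}, with the regularity assumptions \eqref{ass:dist}--\eqref{ass:reg2} supplying the error control, ultimately deferring to that cited proposition. Your sketch of how the assumptions enter (splitting the Green's function sum at a cutoff $h$, balancing near- and far-field contributions, and using \eqref{ass:epsilon} to ensure a positive exponent) is a faithful outline of the argument in \cite{ammari2017effective}.
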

	
	In order for the sums in \Cref{lem:points} to be well behaved as $N\to\infty$, we make one additional assumption on the regularity of the distribution: that there exists a real-valued function $\widetilde{V}\in C^1(\overline{\Omega})$ such that for any $f\in C^{0,\alpha}(\Omega)$, with $0<\alpha\leq1$, there is a constant $C_3$ such that %for any $N$ it holds that
	\begin{equation} \label{ass:integral}
	\max_{1\leq j\leq N} \left| \frac{1}{N}\sum_{i\neq j} G^k(z_j^N-z_i^N)f(z_i^N)-\int_\Omega G^k(z_j^N-y)\widetilde{V}(y)f(y) \de y \right| \leq C_3\frac{1}{N^{\alpha/3}}\|f\|_{C^{0,\alpha}(\Omega)}.
	\end{equation}
	\begin{remark}
		It will hold that $\widetilde{V}\geq0$. If the resonators' centres $\{z_j^N:j=1,\dots,N\}$ are uniformly distributed, then $\widetilde V$ will be a positive constant, $\widetilde V = \frac{1}{|\Omega|}.$
	\end{remark}

	Under all these assumptions, we are able to derive effective equations for the system with an arbitrarily large number of small resonators. If we let $\epsilon_3\in(0,\tfrac{1}{3})$, then we will seek effective equations on the set given by
	\begin{equation*}
	Y_{\epsilon_3}^N:=\left\{x\in\mathbb{R}^3:|x-z_j^N|\geq N^{\epsilon_3-1} \text{ for all }1\leq j\leq N\right\},
	\end{equation*}
	which is the set of points that are sufficiently far from the resonators, so avoid the singularities of the Green's function.
	
	\begin{thm} \label{thm:homogenized}
		Under the assumptions \eqref{ass:number}--\eqref{ass:integral}, the solution $u^N$ to the scattering problem \eqref{eq:scattering_singleres} with the system of resonators $D_0^{r,N}=\bigcup_{j=1}^N D_{0,j}^{r,N}$ converges to the solution of
		\begin{equation*}
		\begin{cases}
		\left(\Delta+k^2-\frac{\iu\Lambda a\mathrm{Cap}_D}{b}\widetilde{V}(x)\right)u(x)=0, &  x\in\Omega, \\
		\left(\Delta+k^2\right)u(x)=0, & x\in\mathbb{R}^3\setminus\Omega,\\
		u\big|_- = u \big|_+ & \mbox{ on } \partial \Omega,
		\end{cases}
		\end{equation*}
		as $N\to\infty$, together with a radiation condition governing the behaviour in the far field, which says that uniformly for all $x\in Y_{\epsilon_3}^N$ it holds that
		\begin{equation*}
		|u^N(x)-u(x)|\leq C N^{-\min\left\{\frac{1-\epsilon_0}{6},\epsilon_2,\epsilon_3,\frac{1-\epsilon_3}{3}\right\}}.
		\end{equation*}
		If $a>0$ and $b<0$, then this convergence holds regardless of the choice of $\Lambda$. If $b>0$, then there exists at least one $\Lambda\in\mathbb{R}$ for which the solution converges.
	\end{thm}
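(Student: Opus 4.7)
The plan is to combine the point-scatter approximation from \Cref{lem:points} with the scaling assumptions to derive a Lippmann--Schwinger equation whose differential form is the claimed effective PDE. First, at the chosen frequency \eqref{ass:frequency} one computes
$$\omega^2-(\omega^*)^2=-\frac{\iu r^{\epsilon_1}b\,\mathrm{Cap}_{D_0}}{|D_0|},$$
so the ``polarizability'' prefactor appearing in \Cref{lem:points} simplifies, using \eqref{ass:number}, to
$$r\,\mathrm{Cap}_{D_0}\cdot\frac{\omega^2}{\omega^2-(\omega^*)^2}=\frac{\iu a\,\mathrm{Cap}_{D_0}}{b}\,r^{1-\epsilon_1}=\frac{\iu\Lambda a\,\mathrm{Cap}_{D_0}}{bN}.$$
Evaluating the resulting representation at the centres $z_j^N$ produces a linear system for the values $U_j:=u^{in,N}_j(z_j^N)$, namely
$$U_j=u^{\mathrm{in}}(z_j^N)+\frac{\iu\Lambda a\,\mathrm{Cap}_{D_0}}{bN}\sum_{i\neq j}G^k(z_j^N-z_i^N)\,U_i+O(N^{-\epsilon_2}).$$

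Next I would invoke \eqref{ass:integral} to pass from the finite sum to an integral against $\widetilde V$, so that in the limit $N\to\infty$ the (interpolating) limit $u$ of the $U_j$ solves the Lippmann--Schwinger equation
$$u(x)=u^{\mathrm{in}}(x)+\frac{\iu\Lambda a\,\mathrm{Cap}_{D_0}}{b}\int_\Omega G^k(x-y)\widetilde V(y)u(y)\,\de y, \qquad x\in\R^3.$$
Applying $(\Delta+k^2)$ to both sides and using $(\Delta+k^2)G^k(x-y)=\delta(x-y)$ together with $(\Delta+k^2)u^{\mathrm{in}}=0$ yields the PDE stated in the theorem inside $\Omega$ and the free Helmholtz equation outside. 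The continuity of $u$ across $\p\Omega$ follows from the continuity of the volume integral against a bounded density, and the Sommerfeld radiation condition is inherited from the outgoing Green's function. For the quantitative rate, I would separately track the three sources of error already present: the $O(N^{-\epsilon_2})$ from \Cref{lem:points}, the $O(N^{-\alpha/3})$ from \eqref{ass:integral}, and the far-field remainder $O(r|x-z_j^N|^{-1})$ restricted to $Y^N_{\epsilon_3}$ (which is controlled using $r^{1-\epsilon_1}=\Lambda/N$ and $|x-z_j^N|\geq N^{\epsilon_3-1}$). Balancing these terms against $\epsilon_0,\epsilon_2,\epsilon_3$ reproduces the exponent $\min\{(1-\epsilon_0)/6,\epsilon_2,\epsilon_3,(1-\epsilon_3)/3\}$.

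The final step is to prove well-posedness of the Lippmann--Schwinger equation so that the continuous argument above actually defines the unique limit $u$. The operator
$$\mathcal T_\Lambda[w](x):=\frac{\iu\Lambda a\,\mathrm{Cap}_{D_0}}{b}\int_\Omega G^k(x-y)\widetilde V(y)w(y)\,\de y$$
is compact on $L^2(\Omega)$, so Fredholm theory applies. When $b<0$, the effective potential $-\iu\Lambda a\,\mathrm{Cap}_D \widetilde V/b$ has nonnegative imaginary part on $\Omega$; testing any homogeneous solution of $(I-\mathcal T_\Lambda)u=0$ against $\bar u$, integrating by parts in a large ball and exploiting the Sommerfeld condition gives $\Im\int|u|^2\widetilde V=0$ and then $u\equiv0$ by unique continuation, yielding solvability for every $\Lambda$. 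When $b>0$ the sign is reversed, so $I-\mathcal T_\Lambda$ may fail to be invertible, but the set of ``bad'' $\Lambda\in\R$ is discrete because $\mathcal T_\Lambda$ depends analytically on $\Lambda$ with compact values and is not everywhere non-invertible; hence at least one admissible $\Lambda$ exists.

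The main obstacle is the $b>0$ case: one must justify the passage from the discrete system to the integral equation uniformly in $N$ in a norm strong enough that Fredholm invertibility of $I-\mathcal T_\Lambda$ transfers to invertibility of the discrete system for large $N$, while simultaneously tracking that the interpolated $U_j$ converges to the solution of the limit equation on $Y^N_{\epsilon_3}$. Aside from this well-posedness/stability argument, the bookkeeping of error terms is routine.
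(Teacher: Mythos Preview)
Your overall strategy matches the paper's: reduce to a Lippmann--Schwinger equation via the point-scatter approximation and the scaling identities, then cite the quantitative convergence machinery of \cite{ammari2017effective}, with the key new ingredient being the invertibility of $I-\mathcal T_\Lambda$. The derivation of the effective coefficient and the error bookkeeping are exactly as in the paper.

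For the well-posedness step the two approaches diverge. In the case $b<0$, the paper tests the homogeneous equation against $\overline f$ on $\Omega$ and uses the sign condition $\Im\int_{\partial\Omega}\partial_\nu f\,\overline f\,d\sigma\ge 0$ from \cite{nedelec2001acoustic}, with equality forcing $f=0$; this avoids your appeal to unique continuation, which would need $\widetilde V>0$ on an open set (only $\widetilde V\ge0$ is assumed). In the case $b>0$, you invoke analytic Fredholm theory: $\Lambda\mapsto\mathcal T_\Lambda$ is analytic and compact-valued, and $I-\mathcal T_0=I$ is invertible, so the singular set of $\Lambda$ is discrete. This is cleaner than the paper's argument, which proceeds by contradiction, taking $\Lambda_n\to0$ with nontrivial kernel elements, normalizing in $H^1(\Omega)$, extracting an $L^2$-convergent subsequence, and using the real-part sign condition $\Re\int_{\partial\Omega}\partial_\nu g_n\,\overline{g_n}\,d\sigma\le0$ to upgrade to $H^1$-convergence to zero. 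Your route gives the same conclusion with less work; the paper's route has the minor advantage of producing a small interval of admissible $\Lambda$ near $0$ rather than merely the complement of a discrete set.
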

	\begin{proof}
		This follows by modifying the results of \cite{ammari2017effective}. Much of this is straightforward, the important subtlety being to show that the operator
		$$\mathcal{T}[f](x):=\frac{\iu\Lambda a\mathrm{Cap}_{D_0}}{b}\int_\Omega G^k(x-y) \widetilde{V}(y) f(y) \de y,$$
		is such that $I-\mathcal{T}$ is invertible. Since $\mathcal{T}$ is compact, $I-\mathcal{T}$ is of Fredholm type so is invertible if and only if it is injective. Consider, first, the case that $a>0$ and $b<0$ and suppose $f\in C^{0,\alpha}(\Omega)$ is such that $(I-\mathcal{T})[f]=0$. Applying $\Delta+k^2$, we see that
		\begin{equation*}
		\Delta f = -k^2 f + \frac{\iu \Lambda a \mathrm{Cap}_{D_0}}{b} \widetilde{V} f \quad \text{in }\Omega,
		\end{equation*}
		from which we see that
		\begin{equation} \label{eq:f_neg}
			-\int_{\Omega} |\nabla f|^2 \de x + \int_{\partial\Omega} \ddp{f}{\nu} \overline{f} \de\sigma = -k^2\int_{\Omega} |f|^2 \de x + \frac{\iu \Lambda a \mathrm{Cap}_{D_0}}{b} \int_{\Omega} \widetilde{V} |f|^2 \de x.
		\end{equation}
		From \cite{nedelec2001acoustic} we know that
		\begin{equation} \label{eq:im_sign}
		\Im\int_{\partial\Omega} \ddp{f}{\nu}\overline{f} \de\sigma\geq0,
		\end{equation}
		with equality only if $f=0$. Since $\widetilde{V}\geq0$ we have also that $\int_{\Omega} \widetilde{V} |f|^2 \de x\geq0$ so taking the imaginary part of \eqref{eq:f_neg} gives us that
		\begin{equation*}
		0\leq\Im\int_{\partial\Omega} \ddp{f}{\nu}\overline{f} \de\sigma =  \frac{ \Lambda a \mathrm{Cap}_{D_0}}{b} \int_{\Omega} \widetilde{V} |f|^2 \de x \leq 0,
		\end{equation*}
		hence $f=0$. 
		 
		 Conversely, if $b>0$ then we must take more care to choose the constant $\Lambda$ to guarantee invertibility. Assume, for contradiction, that we cannot choose $\Lambda$ such that $I-\mathcal{T}$ is invertible. Then, we can choose a sequence of real numbers $\{\Lambda_n:n\in\mathbb{N}\}$ such that $\Lambda_n\to0$ and for each $n$ there exists $0\neq f_n\in H^1(\Omega)$ such that $(I-\mathcal{T})f_n=0$. Hence, it holds that $g_n:={f_n}/{\|f_n\|_{H^1(\Omega)}}$ satisfies
		\begin{equation} \label{eq:gn}
		\begin{cases}
		\Delta g_n + k^2 g_n - \frac{\iu\Lambda_n a\mathrm{Cap}_{D_0}}{b} \widetilde{V} g_n=0 & \text{in } \Omega, \\
		\ddp{g_n}{\nu}=\mathcal{N}_k(g_n) & \text{on } \partial\Omega,
		\end{cases}
		\end{equation}
		where $\mathcal{N}_k$ is the Dirichlet-to-Neumann map on the exterior of $\Omega$, defined as $\mathcal{N}_k[\varphi]:=\ddp{v}{\nu}\big|_{\partial\Omega}$ where $v$ solves $(\Delta +k^2)v=0$ on $\mathbb{R}^3\setminus\overline{\Omega}$ with $v=\varphi$ on $\partial\Omega$ and the Sommerfeld radiation condition at infinity. Since $\{g_n:n\in\mathbb{N}\}$ is bounded in $H^1(\Omega)$, which is compactly embedded into $L^2(\Omega)$, there exists some $g\in H^1(\Omega)$ such that (passing to a subsequence) %$g_n\rightharpoonup g$ in $H^1(\Omega)$ and
		$g_n\to g$ in $L^2(\Omega)$. 
		
		We want to show that, in fact, $g_n$ converges strongly to $g$ in $H^1(\Omega)$ and that $g=0$, which will contradict the fact that $\|g_n\|_{H^1{\Omega}}=1$ for all $n$. Studying the limiting form of \eqref{eq:gn}, we see that the limit $g$ is the restriction of $w$ to $\Omega$, where $w$ is the solution to $(\Delta+k^2)w=0$ on $\mathbb{R}^3$ with the Sommerfeld radiation condition at infinity. This is well known to have a unique solution given by $w=0$, hence $g=w|_\Omega=0$. Analogous to \eqref{eq:f_neg}, it holds for each $n$ that 
		\begin{equation*} \label{eq:f_pos}
		-\int_{\Omega} |\nabla g_n|^2 \de x + \int_{\partial\Omega} \ddp{g_n}{\nu}\overline{g_n} \de\sigma = -k^2\int_{\Omega} |g_n|^2 \de x + \frac{\iu \Lambda_n a \mathrm{Cap}_{D_0}}{b} \int_{\Omega} \widetilde{V} |g_n|^2 \de x,
		\end{equation*}
		and we know from \cite{nedelec2001acoustic} that
		\begin{equation} \label{eq:re_sign}
		\Re\int_{\partial\Omega} \ddp{g_n}{\nu}\overline{g_n} \de\sigma\leq0,
		\end{equation}
		so we see that
		\begin{equation*}
		\int_{\Omega} |\nabla g_n|^2 \de x \leq k^2\int_{\Omega} |g_n|^2 \de x.
		\end{equation*}
		Therefore, $\nabla g_n\to0$ in $L^2(\Omega)$ so we have that $g_n\to 0$ in $H^1(\Omega)$, which gives the desired contradiction.

		Once we know that $I-\mathcal{T}$ is invertible, we can see that the limiting system is well posed and the rest of the argument (in particular, proving that the field given by \Cref{lem:points} converges to the limiting system) follows from \cite{ammari2017effective}.
	\end{proof}

	\begin{remark}
		The assumption \eqref{ass:frequency} is important so that the frequency $\omega$ is close to the resonant frequency $\omega^*$. In particular, the difference is such that $\omega^2-(\omega^*)^2=O(r^{\epsilon_1})$. This means that the behaviour will be dominated by the monopole resonant modes of each small resonator. If we relaxed this assumption, then other coupled modes might be excited, invalidating the use of the point-interaction approximation from \Cref{lem:points}.
	\end{remark}
	\begin{remark}
		The assumption that $\Omega$ is spherical is needed so that we are able to infer \eqref{eq:im_sign} and \eqref{eq:re_sign} from the results of \cite{nedelec2001acoustic}. 
	\end{remark}

	\subsection[Effective medium for PT-symmetric pairs]{Effective medium for $\mathcal{PT}$-symmetric pairs} \label{sec:pt_cavities}
	\begin{figure}
		\centering
		\begin{tikzpicture}
		\draw (-0.1,0) ellipse (2.2 and 1.5);
		\node at (1.75,-1.2){\large $\Omega$};
		\begin{scope}[xshift=1cm,rotate=10]
		\draw (-0.12,0) circle (0.09) node[rotate=10]{\tiny $+$};
		\draw (0.12,0) circle (0.09) node[rotate=10]{\tiny $-$};
		\end{scope}
		\begin{scope}[xshift=-1.1cm,yshift=0.5cm,rotate=50]
		\draw (-0.12,0) circle (0.09) node[rotate=50]{\tiny $+$};
		\draw (0.12,0) circle (0.09) node[rotate=50]{\tiny $-$};
		\end{scope}
		\begin{scope}[xshift=0.4cm,yshift=-0.6cm,rotate=-70]
		\draw (-0.12,0) circle (0.09) node[rotate=-70]{\tiny $-$};
		\draw (0.12,0) circle (0.09) node[rotate=-70]{\tiny $+$};
		\end{scope}
		\begin{scope}[xshift=0.3cm,yshift=0.7cm,rotate=-110]
		\draw (-0.12,0) circle (0.09) node[rotate=-110]{\tiny $+$};
		\draw (0.12,0) circle (0.09) node[rotate=-110]{\tiny $-$};
		\end{scope}
		\begin{scope}[xshift=-0.8cm,yshift=-0.5cm,rotate=-180]
		\draw (-0.12,0) circle (0.09) node[rotate=-180]{\tiny $+$};
		\draw (0.12,0) circle (0.09) node[rotate=-180]{\tiny $-$};
		\end{scope}
		\begin{scope}[xshift=-0.3cm,yshift=0.12cm,rotate=160]
		\draw (-0.12,0) circle (0.09) node[rotate=160]{\tiny $+$};
		\draw (0.12,0) circle (0.09) node[rotate=160]{\tiny $-$};
		\end{scope}
		\end{tikzpicture}
		\caption{A cavity containing many small $\mathcal{PT}$-symmetric pairs of resonators. Here, $+$ and $-$ denote opposite signs of the imaginary part of the material coefficients. Microscopic $\mathcal{PT}$-symmetry does not lead to useful symmetry at the macroscale, since this property is lost under homogenization.} \label{fig:PT_cavity}
	\end{figure}
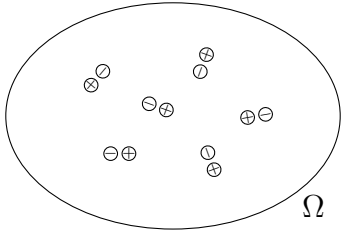
	
	It is interesting to compare the results of \Cref{sec:symm_cavities} to the corresponding behaviour of a cavity filled with a large collection of small $\mathcal{PT}$-symmetric pairs of resonators, as depicted in \Cref{fig:PT_cavity}. {This setting does not provide an approach to designing non-Hermitian cavities, so we avoid doing any rigorous homogenization in this case.} However, interesting behaviour is seen when each pair is poised at an asymptotic exceptional point (\emph{cf.} similar analysis of the real-valued case in \cite{ammari2017double}).
	
	Recall the $\mathcal{PT}$-symmetric resonator pair $D=D_1\cup D_2$ from \Cref{sec:exceptional}. We will define the small dimer $D^r=D_1^r\cup D_2^r$, for some small $r>0$, as
	\begin{equation*}
	D^r=r D+z,
	\end{equation*}
	where $z\in\mathbb{R}^3$ is the new centre of $D^r$. We re-use the notation for the material parameters from \Cref{sec:exceptional} but, in order for resonance to occur at $O(1)$ frequencies, scale the material parameters so that
	\begin{equation} \label{eq:parameter_choice}
	v_1^2\delta_1:=r^2a+\iu r^{2}b, \qquad v_2^2\delta_2:=r^2a-\iu r^{2}b,
	\end{equation}
	for real-valued constants $a,b=O(1)$. In this case, we have chosen both the real and imaginary parts of $v_i^2\delta_i$ to be $O(r^2)$ since they need to have the same asymptotic behaviour in order for the resonator pair to support an asymptotic exceptional point, as predicted by \Cref{thm:exceptional}.
	
	We must first replicate \Cref{lem:modal} in the present setting, using asymptotic expansions in terms of $r\ll1$ (and $\delta=O(r^2)$), while $\omega=O(1)$. We have, as $r\to0$, that the solution to the problem \eqref{eq:scattering} for scattering by $D^r$ can be represented in the form \eqref{eq:layer_potential_representation} with densities $\phi,\psi\in L^2(\D^r)\times L^2(\D^r)$ which satisfy
	\begin{gather*} 
	\S_{D^r}[\phi-\psi]=u^{\mathrm{in}}+O(r), \quad \text{on} \ \p D_1^r\cup\p D_2^r, %\label{eq:first}
	\\
	\left(-\frac{1}{2}I+\K_{D^r}^*+\frac{\omega^2}{v_j^2}\K_{{D^r},2}\right)[\phi]-\delta_j\left(\frac{1}{2}I+\K_{D^r}^*\right)[\psi]=O(r^2), \quad \text{on} \ \p D_j^r,\, j=1,2.
	\end{gather*}
	Repeating the arguments of \Cref{lem:modal}, we find that the solution to the scattering problem can be written as
	\begin{equation} \label{eq:excep_decomp} 
	u-u^{\mathrm{in}} = q_1S_{D^r,1}^\omega + q_2S_{D^r,2}^\omega - \S_{D^r}^\omega\left[\S_{D^r}^{-1}[u^{\mathrm{in}}]\right] + O(r),
	\end{equation}
	where 
	$$S_{D^r,j}^\omega(x) = \begin{cases}
	\S_{D^r}^{k}\left[\S_{D^r}^{-1}[\Chi_{\D_j^r}]\right](x), & x\in\mathbb{R}^3\setminus\overline{D^r},\\[0.3em]
	\S_{D^r}^{k_i}\left[\S_{D^r}^{-1}[\Chi_{\D_j^r}]\right](x), & x\in D_i^r, \ i=1,2,
	\end{cases} 
	$$
	and the constants $q_1$ and $q_2$ satisfy
	\begin{equation} \label{eq:q_excep}
	\left(C_{D^r}^v-\omega^2|D_1^r|I\right)\begin{pmatrix}q_1\\q_2\end{pmatrix}
	=-
	\begin{pmatrix} r^2(a+\iu b)\int_{\D_1^r}\S_{D^r}^{-1}[u^{\mathrm{in}}]\de\sigma \\[0.3em]
	r^2(a-\iu b)\int_{\D_2^r}\S_{D^r}^{-1}[u^{\mathrm{in}}]\de\sigma \end{pmatrix}
	+O(r^4).
	\end{equation}
	
	We now wish to compute expressions for $q_1$ and $q_2$ in the case that we are at the asymptotic exceptional point, meaning that $b=b_0$ as specified by \Cref{thm:exceptional}. In this case, $C_{D^r}^v$ is non-Hermitian and has one eigenvalue with a one-dimensional eigenspace. We will use the Jordan decomposition for $C_{D^r}^v$. Using the notation $C_{ij}$ to denote the capacitance coefficients of the original fixed dimer $D$, as defined in \Cref{sec:exceptional}, the eigenvalue of $C_{D^r}^v$ is given by $r^3\lambda_1$ where $\lambda_1=aC_{11}$. We have that
	\begin{equation} \label{eq:JNF}
	C_{D^r}^v=SJS^{-1},
	\end{equation}
	where
	\begin{equation*}
	J = \begin{pmatrix}
	r^3\lambda_1 & 1 \\ 0 & r^3\lambda_1
	\end{pmatrix}, \quad
	S=\begin{pmatrix}
	-rC_{12} & \frac{\iu C_{12}}{r^2b_0 C_{11}} \\
	\frac{\iu rb_0 C_{11}}{a+ib_0} & 0
	\end{pmatrix}, \quad 
	S^{-1}=-\begin{pmatrix}
	0 & \frac{\iu(a+\iu b_0)}{rb_0 C_{11}} \\
	\frac{\iu r^2b_0 C_{11}}{C_{12}} & r^2(a+ib_0)
	\end{pmatrix}.
	\end{equation*}
	Using \eqref{eq:JNF} and writing $\lambda=r^{-3}\omega^2|D_1^r|=\omega^2|D_1|$, the formula \eqref{eq:q_excep} gives us that
	\begin{equation*}
	\begin{pmatrix}q_1\\q_2\end{pmatrix}
	=- S\begin{pmatrix} r^{-3}(\lambda_1 - \lambda)^{-1} & -r^{-6}(\lambda_1 - \lambda)^{-2} \\ 0 & r^{-3}(\lambda_1 - \lambda)^{-1} \end{pmatrix}S^{-1}\begin{pmatrix} 
	r^2(a+\iu b)\int_{\D_1^r}\S_{D^r}^{-1}[u^{\mathrm{in}}]\de\sigma \\[0.3em] 
	r^2(a-\iu b)\int_{\D_2^r}\S_{D^r}^{-1}[u^{\mathrm{in}}]\de\sigma \end{pmatrix}+O(r),
	\end{equation*}
	\emph{i.e.}
	\begin{equation} \label{eq:excep_coeffs}
	\begin{pmatrix}q_1\\q_2\end{pmatrix} = r^{-1}
	\begin{pmatrix} Q_{11} & Q_{12} \\ Q_{21} & Q_{22} \end{pmatrix}
	\begin{pmatrix} (a+\iu b)\int_{\D_1^r}\S_{D^r}^{-1}[u^{\mathrm{in}}]\de\sigma \\[0.3em] (a-\iu b)\int_{\D_2^r}\S_{D^r}^{-1}[u^{\mathrm{in}}]\de\sigma \end{pmatrix}+O(r),
	\end{equation}
	where
	\begin{align*}
	Q_{11}&={\iu b_0 C_{11}}\frac{1}{(\lambda - \lambda_1)^2}+\frac{1}{\lambda - \lambda_1}, \qquad & 
	Q_{12}&={C_{12}(a+\iu b_0)}\frac{1}{(\lambda - \lambda_1)^2}, \\
	Q_{21}&= \frac{b_0^2 C_{11}^2}{(a+\iu b_0)C_{12}}\frac{1}{(\lambda - \lambda_1)^2}, &
	Q_{22}&=-{\iu b_0 C_{11}}\frac{1}{(\lambda - \lambda_1)^2}+\frac{1}{\lambda - \lambda_1}.
	\end{align*}

	\begin{lemma} \label{lem:excep_dimers}
	As $r\rightarrow 0$, the solution to the Helmholtz problem \eqref{eq:scattering} for scattering by the small $\mathcal{PT}$-symmetric dimer $D^r=rD+z$ with fixed frequency $\omega=O(1)$ can be written as
	\begin{equation*} 
	u(x)-u^{\mathrm{in}}(x)=r\, {m}(\omega)G^k(x)u^{\mathrm{in}}(0)+O(r^2),
	\end{equation*} 
	where, if $\omega_1=\sqrt{aC_{11}|D_1|^{-1}}$, 
	\begin{equation*}
	{m}(\omega)=\mathrm{Cap}_D\left(\frac{a^2C_{11}C_{12}}{|D_1|^2}\frac{1}{(\omega^2-\omega_1^2)^2} +\frac{a\mathrm{Cap}_D}{2|D_1|}\frac{1}{\omega^2-\omega_1^2}+1\right).
	\end{equation*}
	\end{lemma}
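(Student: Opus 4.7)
The plan is to follow the strategy of \Cref{lem:point_scatter} adapted to the two-resonator dimer. The starting point is the representation \eqref{eq:excep_decomp} with the coefficients $q_1,q_2$ already computed in \eqref{eq:excep_coeffs}; the goal is to extract the leading $O(r)$ behaviour of the scattered field as $r\to 0$ and then simplify the resulting prefactor into the announced closed form for $m(\omega)$.

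First I would establish scaling relations for the small dimer. The rescaling $y\mapsto ry+z$ gives $\de\sigma_{\p D^r}=r^2\de\sigma_{\p D}$ and turns $\S_{D^r}^{-1}$ into $r^{-1}\S_D^{-1}$ at leading order, so that $\S_{D^r}^{-1}[\Chi_{\p D_j^r}]$ corresponds to $r^{-1}\psi_j$. Combined with Taylor-expanding $u^{\mathrm{in}}$ around the dimer centre, this yields
\begin{equation*}
\int_{\p D_j^r}\S_{D^r}^{-1}[\Chi_{\p D_i^r}]\de\sigma=-rC_{ji}+O(r^2),\qquad \int_{\p D_j^r}\S_{D^r}^{-1}[u^{\mathrm{in}}]\de\sigma=-r u^{\mathrm{in}}(0)(C_{j1}+C_{j2})+O(r^2).
\end{equation*}
Since $D^r$ has diameter $O(r)$, the single-layer potential evaluated at fixed $x$ outside $D^r$ collapses, via Taylor-expansion of $G^k(x-y)$ in $y$, to $\S_{D^r}^k[\phi^r](x)=G^k(x)\int_{\p D^r}\phi^r\de\sigma+O(r\|\phi^r\|_{L^2})$. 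In particular $S^\omega_{D^r,j}(x)=-r(C_{1j}+C_{2j})G^k(x)+O(r^2)$ and $\S_{D^r}^k[\S_{D^r}^{-1}[u^{\mathrm{in}}]](x)=-r\mathrm{Cap}_D\, u^{\mathrm{in}}(0)G^k(x)+O(r^2)$.

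Using the $\P\T$-symmetries $C_{11}=C_{22}$ and $C_{12}=C_{21}$, the column-sum $\sigma:=C_{j1}+C_{j2}=\mathrm{Cap}_D/2$ is independent of $j$. Substituting into \eqref{eq:excep_coeffs} gives
\begin{equation*}
q_j=-\sigma\, u^{\mathrm{in}}(0)\bigl[Q_{j1}(a+\iu b_0)+Q_{j2}(a-\iu b_0)\bigr]+O(r),
\end{equation*}
and assembling the three contributions in \eqref{eq:excep_decomp} produces
\begin{equation*}
u(x)-u^{\mathrm{in}}(x)=r\, G^k(x)u^{\mathrm{in}}(0)\bigl[\sigma^2\Sigma+\mathrm{Cap}_D\bigr]+O(r^2),\quad\text{where } \Sigma:=(a+\iu b_0)(Q_{11}+Q_{21})+(a-\iu b_0)(Q_{12}+Q_{22}).
\end{equation*}
Thus $m(\omega)=\mathrm{Cap}_D+\sigma^2\Sigma$, and the remaining task is algebraic simplification.

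The main obstacle is the simplification of $\Sigma$. Writing $\tau:=(\lambda-\lambda_1)^{-1}$, the $\tau$-part of $\Sigma$ collapses immediately to $2a$ (since the $\tau$-pieces of $Q_{11}$ and $Q_{22}$ contribute $a\pm\iu b_0$ and the $\tau$-pieces of $Q_{12}$, $Q_{21}$ vanish), and together with $\lambda-\lambda_1=|D_1|(\omega^2-\omega_1^2)$ and $\sigma=\mathrm{Cap}_D/2$ this reproduces the single-pole coefficient $\frac{a\mathrm{Cap}_D}{2|D_1|}$. The delicate part is the $\tau^2$-coefficient: expanding $(a+\iu b_0)(Q_{11,\tau^2}+Q_{21,\tau^2})+(a-\iu b_0)(Q_{12,\tau^2}+Q_{22,\tau^2})$ and collecting real and imaginary contributions yields $-2b_0^2C_{11}+b_0^2C_{11}^2/C_{12}+C_{12}(a^2+b_0^2)$, which only collapses to the clean form $2a^2C_{11}C_{12}/\sigma$ when one invokes the exceptional-point identity $b_0^2(C_{11}^2-C_{12}^2)=a^2C_{12}^2$ (applied twice, once to rewrite $b_0^2C_{11}^2/C_{12}=(a^2+b_0^2)C_{12}$ and once to rewrite $b_0^2(C_{11}-C_{12})=a^2C_{12}^2/\sigma$). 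Multiplying by $\sigma^2$ then produces the double-pole coefficient $\mathrm{Cap}_D\cdot a^2C_{11}C_{12}/|D_1|^2$, so that $m(\omega)$ takes the stated form $\mathrm{Cap}_D\bigl(\tfrac{a^2C_{11}C_{12}}{|D_1|^2(\omega^2-\omega_1^2)^2}+\tfrac{a\mathrm{Cap}_D}{2|D_1|(\omega^2-\omega_1^2)}+1\bigr)$. The presence of the double pole is precisely the fingerprint of the non-trivial Jordan-block structure of $C^v_{D^r}$ at the exceptional point, and the algebraic miracle that collapses the $\tau^2$-coefficient is the core of the argument.
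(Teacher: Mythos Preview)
Your proposal is correct and follows exactly the same approach as the paper: establish the scaling relations for the small dimer (your three displayed identities are precisely the paper's) and then combine them with the representation \eqref{eq:excep_decomp} and the coefficients \eqref{eq:excep_coeffs}. The paper's proof actually stops after stating the scaling relations and the identity $\mathrm{Cap}_D=2(C_{11}+C_{12})$, leaving the algebraic collapse to $m(\omega)$ implicit; your explicit computation of $\Sigma$ using the exceptional-point relation $b_0^2(C_{11}^2-C_{12}^2)=a^2C_{12}^2$ fills in what the paper omits, and your algebra checks out.
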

	\begin{proof}
	The terms in \eqref{eq:excep_decomp} and \eqref{eq:excep_coeffs} can be further simplified using scaling properties analogous to \eqref{eq:scaling_properties}. Note that, thanks to the assumed symmetry $\P D = D$, it holds that $\mathrm{Cap}_D=2(C_{11}+C_{12})$. Then, we have that 
	\begin{gather*}
	\int_{\D_j^r}\S_{D^r}^{-1}[u^{\mathrm{in}}] \de\sigma = -r\tfrac{1}{2}\mathrm{Cap}_D u^{\mathrm{in}}(0)+O(r^2), \nonumber \\
	S_{D^r,j}^\omega(x)= -r\tfrac{1}{2}\mathrm{Cap}_DG^k(x)+O(r^2), \label{eq:scaling_properties_dimer}\\
	\S_{D^r}^k\left[\S_{D^r}^{-1}[u^{\mathrm{in}}]\right](x)=-r\mathrm{Cap}_Du^{\mathrm{in}}(0)G^k(x)+O(r^2). \nonumber \qedhere
	\end{gather*} 
	\end{proof}

	\begin{remark}
	It is interesting to consider using \Cref{lem:excep_dimers} as the starting point for a similar homogenization argument to the one we applied to cavities of single resonators in Section \ref{sec:exceptional}. Define $N$ small resonator pairs as $D_j^N=r D+z_j^N$. Allowing a formal argument (and assuming all the required conditions to guarantee \emph{e.g.} the validity of the point-scatter approximation and the convergence of the microfield to the effective one), we observe that as $N\to\infty$ we should obtain the homogenized equation
	\begin{equation*}
	\begin{cases}
	\left(\Delta+k^2-\Lambda {m}(k/v) V(x)\right)u(x)=0, &  x\in\Omega, \\
	\left(\Delta+k^2\right)u(x)=0, & x\in\mathbb{R}^3\setminus\Omega,\\
	u\big|_- = u \big|_+ & \mbox{ on } \partial \Omega,
	\end{cases}
	\end{equation*}
	where $m$ is specified in \Cref{lem:excep_dimers} and $V$ is a function that depends on the resonators' positions. Both $m$ and $V$ are real valued, meaning this effective equation has purely real parameters.
	\end{remark}

\section{Concluding remarks}
In this work, we have studied non-Hermitian systems of high-contrast subwavelength resonators with parity--time symmetry. We have proved the existence of asymptotic exceptional points in a system of two resonators. More precisely, we have proved that there is a value of the gain/loss parameter such that the resonant frequencies and eigenmodes coincide at leading order (in terms of the material contrast). Moreover,  we have studied a metascreen of $\mathcal{PT}$-symmetric resonators. We proved that the two reflection coefficients asymptotically vanish at distinct frequencies, which allows for unidirectional reflectionless transmission. Additionally, we have showed that extraordinarily high transmittance can occur for a specific magnitude of the gain/loss.
Finally, we have proved that large ensembles of non-Hermitian resonators collectively behave as non-Hermitian systems, meaning they might, for example, support exceptional points on a macroscopic scale. Our results in this work are fundamental to understanding not only wave scattering by systems of non-Hermitian subwavelength resonators but also Fano-type resonances in periodic systems.

\bibliographystyle{abbrv}
\bibliography{exceptional}{}

\end{document}